\newtheorem{theorem}{Theorem}[section]
\newtheorem{prop}[theorem]{Proposition}
\newtheorem{lemma}[theorem]{Lemma}
\newtheorem{cor}[theorem]{Corollary}
\newtheorem{ex}[theorem]{Example}
\newtheorem{dfn}[theorem]{Definition}
\newtheorem{remark}[theorem]{Remark}
\newcommand{\abs}[1]{\left\lvert#1\right\rvert}
\newcommand{\norm}[1]{\left\lVert#1\right\rVert}
\newcommand{\op}[1]{\operatorname{#1}}
\renewcommand{\hat}[1]{\widehat{#1}}
\numberwithin{equation}{section}
\DeclareMathOperator{\im}{im}
\DeclareMathOperator{\R}{\mathbb{R}}
\def\ep{\epsilon}
\def\R{\mathbb{R}}
\def\Z{\mathbb{Z}}
\def\N{\mathbb{N}}
\def\cal R{\mathcal R}
\def\k{{\bf k}}
\def\o{\it{o}}
\def\V{\mathbb V}
\DeclarePairedDelimiter\ceil{\lceil}{\rceil}
\DeclarePairedDelimiter\floor{\lfloor}{\rfloor}
\author{Danijel Djordjevi\'c}
\email{danijel$\_$djordjevic@matf.bg.ac.rs}
\address{Faculty of mathematics, University of Belgrade, Studentski trg 16, 11158 Belgrade,
Serbia}
\author{Igor Uljarevi\'c}
\email{igoru@matf.bg.ac.rs}
\address{Faculty of mathematics, University of Belgrade, Studentski trg 16, 11158 Belgrade,
Serbia}
\author{Jun Zhang}
\email{jzhang4518@ustc.edu.cn}
\address{The Institute of Geometry and Physics, University of Science and Technology of China, 96 Jinzhai Road, Hefei Anhui, 230026, China}
\title{Quantitative characterization in contact Hamiltonian dynamics - I}
\begin{document}

\maketitle

\begin{abstract} Based on the contact Hamiltonian Floer theory established in \cite{MU} that applies to any (admissible) contact Hamiltonian system $(M, \xi = \ker \alpha, h)$, where $h$ is a contact Hamiltonian function on a Liouville fillable contact manifold $(M, \xi = \ker \alpha)$, we associate a persistence module to $(M, \xi, h)$, called a gapped module, that is parametrized only by a partially ordered set. It enables us to define various numerical Floer-theoretic invariants. In this paper, we focus on the contact spectral invariants and their applications. Several key properties are proved, which include stability with respect to the Shelukhin-Hofer norm in \cite{Egor-cont} and a triangle inequality of the contact spectral invariant. In particular, our stability property does {\it not} involve any conformal factors; our triangle inequality is derived from a novel analysis on pair-of-pants in the contact Hamiltonian Floer homology. While this paper was nearing completion, the authors were made aware of upcoming work \cite{Cant_shelukhin}, where a similar persistence module for a contact Hamiltonian dynamics was constructed. \end{abstract}


\section{Introduction}

\subsection{Motivation from contact Hamiltonian Floer homology} Similarly to the Hamiltonian dynamics in symplectic geometry, Hamiltonian dynamics also makes sense on a contact manifold $(M, \xi = \ker \alpha)$, with a fixed contact 1-form $\alpha$. Explicitly, given a Hamiltonian function $h: [0,1] \times M \to \R$, by solving the following equations,
\begin{equation} \label{ham-equation}
\alpha(X_h) = h \,\,\,\,\mbox{and}\,\,\,\, \iota_{X_h}d\alpha + dh = dh(R_{\alpha}) 
\end{equation}
one obtains a vector field $X_h$ whose flow, denoted by $\phi_h^t$, preserves the contact structure $\xi$. Along with the spirit of the (symplectic) Hamiltonian Floer homology constructed from the closed orbits of a Hamiltonian flow, one attempts to construct a  contact Hamiltonian Floer homology, still denoted by ${\rm HF}_*(M, h; \k)$ where $\k$ is a fixed field, with generators being the closed orbits of the contact Hamiltonian flow $\{\phi_h^1\}_{t \in [0,1]}$. 

However, this is difficult (at least non-trivial) to be realized, partially because the real 2-dimensional cylinders connecting two closed orbits in the standard definition of the Floer boundary map on the corresponding chain complex 
\[ \partial_*: {\rm CF}_*(M, h; \k) \to {\rm CF}_{*-1}(M, h; \k) \]
sit inside an odd-dimensional ambient space $M$, where the traditional approach such as the pseudo-holomorphic curve theory does not apply directly. 

When $(M, \xi = \ker \alpha)$ is symplectic fillable, \cite{MU} establishes a version of contact Hamiltonian Floer homology ${\rm HF}_*(M, h; \k)$ or briefly ${\rm HF}_*(h)$ if there is no need to emphasize the contact manifold $M$ or the ground field $\k$. The construction is inspired by the classical Viterbo's construction of symplectic homology in \cite{Viterbo-spec}.   
Whenever $h$ satisfies certain admissible condition (see Section \ref{sec-construction}), it is defined via a (symplectic) Hamiltonian Floer homology as follows, 
\begin{equation} \label{MU-def}
{\rm HF}_*(M, h; \k) : = {\rm HF}_*(\widehat{W}, H; \k) 
\end{equation}
where $\widehat{W}$ is the completion of a symplectic filling $W$ of $M$, that is, $\widehat{W} \backslash W = [1, \infty)(r) \times M$,  and $H: [0,1] \times \widehat{W} \to \R$ satisfies 
\begin{equation} \label{linear-H}
H = r \cdot h +C \,\,\,\,\mbox{for some constant $C$}
\end{equation}
when the ``radius'' $r$ is sufficiently large. In other words, ${\rm HF}_*(M, h; \k)$ can be regarded as an intermediate step in Viterbo's construction where the constant slope is replaced by a function $h$ (hence, sometimes $h$ is called the {\bf slope of $H$}). 

Of course, the well-definedness of ${\rm HF}_*(M, h; \k)$ needs a justification, and it turns out that when $H, G: [0,1] \times \hat{W} \to \R$ have the same slope $h$ as in (\ref{linear-H}) (but no need to start from the same radius $r$), then we have 
\begin{equation} \label{H-iso}
{\rm HF}_*(\widehat{W}, H; \k) \simeq {\rm HF}_*(\widehat{W}, G; \k)
\end{equation}
as $\k$-modules. As a Hamiltonian Floer homology on a non-compact symplectic manifold, the homology ${\rm HF}_*(M, h; \k)$ does {\it not} share all the properties as for a closed symplectic manifold. In particular, \cite{UZ22} investigates in detail the behavior of the Floer continuation map for two different (contact) Hamiltonian $h, g: [0,1] \times M \to \R$. Surprisingly, whether the Floer continuation map induces an isomorphism between ${\rm HF}_*(M, -; \k)$ is related to certain rigidity of positive loops in ${\rm Cont}_0(M, \xi)$. 

In this paper, we continue to investigate whether other structures from (symplectic) Hamiltonian Floer homology can be adapted to or in which way they are shared by the contact Hamiltonian Floer homology ${\rm HF}_*(M, h; \k)$. To simplify our discussion, we will consider Liouville fillable $(M, \xi = \ker \alpha)$, where the filling is exact by definition. Then ${\rm HF}_*(M, h; \k)$ is well-defined and finite dimensional over $\k$. For more sophisticated filling, the Novikov field could be involved and we will explore this elsewhere. 

As an infinitely-dimensional Morse theory with the Morse function defined by the action functional $\mathcal A_H$, the (symplectic) Hamiltonian Floer homology can upgrade to a persistence $\k$-module (see the beginning of Section \ref{sec-gap-mod} for a brief introduction of its background and \cite{CZ05,CC-SGGO-prox-09,CdeSGO16,PRSZ20} for more details). This is a relatively simple algebraic structure that enriches the classical homological theory in such a way that homologically {\bf invisible} generators, as well as their homological-killing relation with the respect to the Floer boundary operator, can be studied systematically. As an application, it serves as a uniform perspective to extract lots of homological invariants from Hamiltonian Floer homology, including spectral invariant as well as spectral norm \cite{Oh-spec,Viterbo-spec,Schwarz}, boundary depths \cite{Usher-bd-1,Usher-bd-2,UZ16}, etc. They have made fundamental contributions to the quantitative studies in the modern symplectic geometry, starting from Polterovich-Shelukhin's pioneer work \cite{PS-pers-16}.  However, for ${\rm HF}_*(M, h; \k)$, its definition, justified by the isomorphism in (\ref{H-iso}), possesses a certain amount of ambiguity in the choice of the Hamiltonian function $H: [0,1] \times W \to \R$. Therefore, the standard way to filter the Hamiltonian Floer homology via the functional $\mathcal A_H$ fails for ${\rm HF}_*(M, h; \k)$. In this paper, we propose a novel approach to obtain a persistence $\k$-module structure for ${\rm HF}_*(M, h; \k)$ and read off several useful invariants from it. 

\begin{remark} A recent series of papers from Oh \cite{Oh-21,Oh-21-b,Oh-22, Oh-22-b, Oh-22-c} established a new approach to define a Floer homological theory on a contact manifold {\rm without} passing to any form of its symplectization. Compared with our approach, it is more closer to the standard (symplectic) Hamiltonian Floer type theory where an appropriate action functional is constructed. This leads to the construction of some numerical invariant, e.g. contact spectral invariant, in certain cases {\rm (}see \cite{Oh-Yu-a,Oh-Yu}{\rm )}. \end{remark}

\subsection{Construction of a variant to persistence module} \label{sec-construction} The starting point comes from the well-known formula of the composition of two contact Hamiltonian isotopies, that is, if $h, g: [0,1] \times M \to \R$ generate $\phi_h^t$ and $\phi_g^t$ respectively, then 
\begin{equation} \label{cocycle}
 g\#h : = g_t + \left(e^{\kappa(\phi_g^t)} h_t\right) \circ \phi_g^{-t}  \,\,\,\,\mbox{generates} \,\,\,\, \phi_g^t \circ \phi_h^t
 \end{equation}
where $\kappa(\phi_g^t)$ denotes the conformal factor of $\phi_g^t$. In particular, consider $g_t(x) \equiv \eta \in \R$, a constant function, then $\phi_g^t = \phi_{R}^{\eta t}$ (a reparametrization of the Reeb flow) and $\kappa(\phi_g^t) = 0$. Hence, 
\begin{equation} \label{comp}
(\eta \# h)_t = \eta + h_t \circ \phi_{R}^{-\eta t},
\end{equation}
which generates the contact Hamiltonian isotopy $\phi_R^{\eta t} \circ \phi_h^t$. Once the admissible condition can be verified, one can define the contact Hamiltonian Floer homology ${\rm HF}_*(M, \eta \# h; \k)$. 

To obtain a persistence $\k$-module structure, we will vary $\eta  \in \R$. By the monotone continuation property of ${\rm HF}_*$ in \cite{U-Viterbo}, once a pair of constant functions $\eta \leq \eta' \in \R$ satisfy the following condition: either $\eta = \eta'$ or 
\begin{equation} \label{gap-cond}
\eta' - \eta \geq \int_0^1 \max_M h_t - \min_M h_t \, dt \,\,=: {\rm osc}(h)
\end{equation}
one obtains a well-defined $\k$-linear map $\iota: {\rm HF}_*(M, \eta \# h; \k) \to {\rm HF}_*(M, \eta' \# h; \k)$ since $(\eta \#h)_t \leq (\eta' \#h)_t$ pointwise. Indeed, when $\eta = \eta'$, the induced map $\iota = \mathds{1}$; otherwise by (\ref{comp}) and (\ref{gap-cond}), for each $t \in [0,1]$, 
\begin{align*}
(\eta' \# h)_t - (\eta \# h)_t & = \left(\eta' + h_t \circ \phi_{R}^{-\eta' t}\right) -  \left(\eta + h_t \circ \phi_{R}^{-\eta t}\right)\\
& = (\eta' - \eta) - \left(h_t \circ \phi_R^{-\eta t} - h_t \circ \phi_R^{-\eta' t}\right) \\
& \geq \left(\max_M h_t - \min_M h_t\right)  - \left(h_t \circ \phi_R^{-\eta t} - h_t \circ \phi_R^{-\eta' t}\right) \geq 0. 
\end{align*}
Note that it seems difficult to improve the condition (\ref{gap-cond}) since the value $h_t \circ \phi_{R}^{-\eta t}$ is hard to control in general. A possible way comes from the observation that we are comparing gap $\eta' - \eta$ with the oscillation of $h_t$ on each Reeb flow trajectory, which could improve ${\rm osc}(h)$. For instance, if $h$ generates a strict contact isotopy (i.e., preserving the contact 1-form), then $h_t$ is constant along each Reeb flow trajectory and the requested gap $\eta' - \eta$ is simply $0$. 

\medskip

To proceed, we need to specify the admissible condition for a contact Hamiltonian function in this paper. 


\medskip

\noindent {\it Admissible condition}. Given a Liouville fillable contact manifold $(M, \xi = \ker\alpha)$ with a fixed filling $W$, a contact Hamiltonian $h: [0,1] \times M \to \R$ is called {\it admissible} if the lift $H(r,x) = e^rh(x)$ to the convex end of the completion $\widehat{W}$, viewed as a part of the symplectization $SM: = (\R \times M, d(e^r \alpha))$, generates a Hamiltonian diffeomorphism $\Phi = \Phi_H^1$ defined by 
\begin{equation} \label{lift-Ham}
\Phi(r,x) = (r - \kappa(\phi_h^1)(x), \phi_h^1(x))
\end{equation}
that does not have any fixed points in this convex end. Note that this is weaker than the original definition of being admissible in \cite{MU}, which requires that the time-1 map $\phi_h^1$ does not have any fixed points in $M$. 

More concretely, for the contact Hamiltonian $\eta \# h$ in discussion above, it is admissible if in the convex end there do not exist points $(r,x)$ satisfying 
\begin{equation} \label{time-1-translated}
r - \kappa(\phi_h^1)(x) = r \,\,\,\,\mbox{and}\,\,\,\, \phi_R^{\eta}(\phi_h^1(x)) = x. 
\end{equation}
Then $\kappa(\phi_h^1)(x)=0$ and $\phi_h^1(x) = \phi_R^{-\eta}(x)$, which precisely corresponds to the {\bf translated points of $\phi_h^1$ with time shift $-\eta$}, according to Sandon's definition in \cite{San-tp}. This leads to the following crucial observation, which plays an important role in several arguments later. 

\begin{lemma} \label{lemma-spec} Given a Liouville fillable contact manifold $(M, \xi = \ker \alpha)$ and a contact Hamiltonian $h: [0,1] \times M \to \R$, the set of $\eta \in \R$ where the contact Hamiltonian Floer homology ${\rm HF}_*(M, \eta\#h; \k)$ is {\rm not} admissible is discrete in $\R$. \end{lemma}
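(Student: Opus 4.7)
The plan is to recast the lemma as a statement about the translated point spectrum of $\psi := \phi_h^1$ and argue discreteness via compactness together with a transversality-based dimension count.

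By the calculation preceding the lemma (in particular around~(\ref{time-1-translated})), $\eta \# h$ fails to be admissible precisely when $\psi$ admits a translated point with time shift $-\eta$: a point $x \in M$ with $\kappa(\phi_h^1)(x) = 0$ and $\phi_h^1(x) = \phi_R^{-\eta}(x)$. Let $\Sigma \subset \R$ denote the set of such bad $\eta$, so the task becomes showing $\Sigma$ is discrete. I would first verify that $\Sigma$ is closed: given a sequence $\eta_n \to \eta_\infty$ in $\Sigma$ with witnesses $x_n \in M$, compactness of $M$ extracts a convergent subsequence $x_n \to x_\infty$, and continuity of $\phi_h^1$, $\phi_R^{\bullet}$, and $\kappa(\phi_h^1)$ propagates the defining identities to the limit, placing $\eta_\infty \in \Sigma$.

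The core step is local finiteness. I would consider the total space
\[ X := \left\{(x,\eta) \in M \times \R \,:\, \phi_h^1(x) = \phi_R^{-\eta}(x), \;\; \kappa(\phi_h^1)(x) = 0 \right\}, \]
which sits inside $M \times \R$ of dimension $2n+2$. The defining map imposes $2n+1$ scalar conditions from the Reeb-shift identity plus one from $\kappa = 0$, matching the ambient dimension. For a generic choice of generating Hamiltonian, the differential of the defining map is surjective at every solution — the $\partial_\eta$-direction contributes the Reeb direction, which together with the $M$-directions spans the codomain — so $X$ is $0$-dimensional. Local finiteness of $\Sigma = \pi_\R(X)$ then follows from compactness of $M$ combined with the observation that, for each fixed $x$, the shifts $\eta$ with $\phi_h^1(x) = \phi_R^{-\eta}(x)$ form either a singleton or an arithmetic progression along the Reeb orbit of $\phi_h^1(x)$, whose period is uniformly bounded below on the compact $M$.

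The main obstacle is justifying transversality for arbitrary (non-generic) $h$: in principle, $X$ could contain positive-dimensional degenerate components. To handle this I would invoke a small $C^\infty$-perturbation $\tilde h$ of $h$ that restores transversality; since admissibility is stable under $C^\infty$-small perturbations and $\Sigma$ is closed, the discreteness for $h$ would be recovered by a limiting argument. Making this last step fully rigorous — controlling the deformation of the translated point spectrum under perturbation — is the principal technical task.
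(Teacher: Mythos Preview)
Your identification of the inadmissible set with the translated-point spectrum of $\psi=\phi_h^1$ is correct and matches the paper's first step, and the closedness argument via compactness of $M$ is fine. The genuine gap is the final step: recovering discreteness for \emph{arbitrary} $h$ from the generic case via a limiting argument. Discreteness (and even nowhere-density) of subsets of $\R$ is not preserved under limits. Concretely, if $\tilde h_n\to h$ in $C^\infty$ with each $\Sigma(\tilde h_n)$ discrete, nothing forces $\Sigma(h)$ to be discrete; the stability of admissibility under perturbation only says that points in the \emph{complement} of $\Sigma(h)$ remain in the complement of $\Sigma(\tilde h)$ for nearby $\tilde h$, which yields no control on $\Sigma(h)$ from above. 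You correctly flag this as ``the principal technical task,'' but the proposed route does not close it, and in fact cannot: there exist degenerate $h$ (e.g.\ $h\equiv 0$ on a contact manifold whose closed Reeb orbits have periods accumulating) for which the solution set $X$ is positive-dimensional and no perturbation argument will recover discreteness of its $\R$-projection for the original $h$.

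The paper avoids genericity entirely by recasting $\Sigma$ as the set of critical \emph{values} of a single functional, the Rabinowitz--Floer action functional $\mathcal A_h\colon \Lambda(SM)\times\R\to\R$, whose critical points $(\gamma,\eta)$ correspond to translated points with shift $-\eta$ and satisfy $\mathcal A_h(\gamma,\eta)=\eta$ (this is \cite[Lemma~2.7]{AM}). A Sard-type argument then gives that the critical-value set is closed and nowhere dense, uniformly in $h$, with no perturbation needed. If you want to stay in finite dimensions, the analogue is not a transversality/dimension count on the solution set $X$ but rather exhibiting $\Sigma$ as the critical-value set of a smooth function on a compact manifold and applying the classical Sard theorem; that is a fundamentally different mechanism from the one you sketched.
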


\begin{proof} By Definition 2.5 in \cite{AM}, one defines the Rabinowitz-Floer action functional $\mathcal A_h: \Lambda(SM) \times \R \to \R$, where $\Lambda(SM)$ is the loop space of the symplecitzation of $M$, such that the critical points $(\gamma, \eta) = ((x(t), r(t))_{t \in [0,1]}, \eta)$ of $\mathcal A_h$ precisely correspond to the translated points $p$ of $\phi_h^1$ with time shift $-\eta$ (caution: the sign!). Moreover,
\begin{equation} \label{critical point data}
x\left(1/2\right) \longleftrightarrow p \,\,\,\,\mbox{and}\,\,\,\, \mathcal A_h((\gamma, \eta)) = \eta.
\end{equation}
For a complete argument of this fact, see Lemma 2.7 in \cite{AM}. By the discussion above and (\ref{critical point data}), the requested set $\eta \in \R$ where the contact Hamiltonian Floer homology ${\rm HF}_*(M, \eta\#h; \k)$ is not well-defined is equal to the following set of the negative spectrum of functional $\mathcal A_h$, that is,
\begin{equation} \label{dfn-RFspec}
{\rm spec}(\mathcal A_h) = \left\{-\eta \in \R \,\bigg| \, \begin{array}{ll} \mbox{$\eta$ is the time shift of some} \\ \mbox{translated point of $\phi_h^1$} \end{array}\right\}. 
\end{equation}
Then, by Sard's theorem (applied to $\mathcal A_h$), the set ${\rm spec}(\mathcal A_h)$ is a nowhere dense and closed subset of $\R$, in particular, discrete. \end{proof}

In what follows, to simplify the notations, let us denote the discrete set ${\rm spec}(\mathcal A_h)$ promised in Lemma \ref{lemma-spec} by $\mathcal S_h$. For any strictly increasing sequence $\mathfrak a =  \{\eta_i\}_{i \in \Z}$ satisfying, for any $i \in \Z$, 
\begin{equation} \label{gap-seq}
 \eta_{i+1} - \eta_i \geq {\rm osc}(h), \,\,\,\,\mbox{and}\,\,\,\,\eta_i \in \R \backslash \mathcal S_h, 
 \end{equation}
the associated contact Hamiltonian Floer homologies form a $\Z$-index persistence $\k$-modules, 
\begin{equation} \label{gapped per mod}
\mathbb V_h(\mathfrak a): = \left(\left\{{\rm HF}_*(\eta_i \#h)\right\}_{i \in \Z}, \{\iota_{i, i+1}: {\rm HF}_*(\eta_i \#h) \to {\rm HF}_*(\eta_{i+1} \#h)\}_{i \in \Z}\right).
\end{equation}
Due to the following double identifications, 
\[ \Z \longleftrightarrow \{\eta_i\}_{i \in \Z} \longleftrightarrow {\rm HF}_*(\eta_i \#h),\] 
we are able to view $\mathbb V_h(\mathfrak a)$ as a persistence $\k$-module over $\Z$, instead of over the subset $\{\eta_i\}_{i \in \Z}$ in $\R$.

The resulting persistence $\k$-module $\mathbb V_h(\mathfrak a)$ is different from those constructed via the filtration given by symplectic action functions (which mostly are $\R$-indexed). One the one hand, it is $\Z$-indexed or discretely-indexed in general; on the other hand, it depends on the choice of a sequence $\mathfrak a$ satisfying (\ref{gap-seq}). Within all possible sequences $\mathfrak a$, one defines a partial order $\mathfrak a = \{\eta_i\}_{i \in \Z} \preceq \mathfrak b = \{\tau_i\}_{i \in \Z}$ if for any $\tau_i$, there exists $j(i) \in \Z$ such that $\eta_{j(i)} \leq \tau_i \leq \eta_{j(i)+1}$. Due to the first condition in (\ref{gap-seq}), observe that there exists a family of ``minimal '' $\mathfrak{a}$ (in terms of $\preceq$) in the form of 
\begin{align*}
\mathfrak{a} & = \{\cdots, \eta_0, \eta_1,  \eta_2, \cdots \}\\
& = \{\cdots, \eta_0, \eta_0 + {\rm osc}(h) + o_1(1),  \eta_0 + 2{\rm osc}(h) + o_2(1), \cdots \} 
\end{align*}
It is almost an ${\rm osc}(h)$-arithmetic sequence, where the set $\{o_i(1)\}_{i \in \Z}$ represents small adjustments that may vary along $i$, in order to satisfy the second condition in (\ref{gap-seq}). For each such minimal sequence $\mathfrak a$, the corresponding persistence $\k$-module $\mathbb V_h(\mathfrak a)$ is an example of a so-called {\it almost optimal restriction of an ${\rm osc}(h)$-gapped $\k$-module} which will be discussed in detail in Section \ref{sec-gap-mod}. 

\medskip

In terms of a purely algebraic formulation, for $s, t \in \mathcal \R \backslash \mathcal S_h$, denote by 
\begin{equation} \label{partial-order}
\eta \leq_{{\rm osc}(h)} \lambda: \,\,\,\,\mbox{either $\eta =\lambda$ or $\eta \leq \lambda -{\rm osc}(h)$}.
\end{equation}
This relation defines a partial order on $\mathcal \mathcal \R \backslash \mathcal S_h$. Then, the following persistence module 
\[ \left(\left\{{\rm HF}_*(\eta \#h)\right\}_{\eta \in \R \backslash \mathcal S_h}, \{\iota_{\eta, \lambda}: {\rm HF}_*(\eta \#h) \to {\rm HF}_*(\lambda \#h)\}_{\eta \leq_{{\rm osc}(h)} \lambda \in \R \backslash \mathcal S_h}\right)\]
is, to our best knowledge, the first concrete example in symplectic and contact geometry with parametrization that is {\bf not necessarily totally ordered}. Interestingly, for such a persistence module, \cite[Theorem 1.1]{BC-B-decomposition} still guarantees a decomposition, but at present we are not clear about the indecomposable components in this decomposition (cf.~\cite[Theorem 1.2]{BC-B-decomposition}), which results an obstruction to describe the barcode directly. 

\begin{remark} We on purpose choose the contact Hamiltonian to be $\eta \#h$ instead of $h \# \eta$. First, these two functions are different in general, since by definition, 
\begin{equation} \label{wrong-order}
(h\# \eta)_t = h_t + e^{\kappa(\phi_h^t)} \eta 
\end{equation}
where in particular the conformal factor of $\phi_h^t$ appears. Second, the sequence of contact Hamiltonian Floer homologies $\{{\rm HF}_*(h \# \eta)\}_{\eta \in \R \backslash \mathcal S_h}$ form a standard persistence module in the sense that if $\eta \leq \eta' \in \R \backslash \mathcal S_h$, then there exists a well-defined Floer continuation map $\iota_{\eta, \eta'}: {\rm HF}_*(h \# \eta) \to {\rm HF}_*(h \#\eta')$. Third, when comparing the resulting persistence modules associated to different contact Hamiltonians $h$ and $g$, in particular to obtain a Floer continuation map 
\[ {\rm HF}_*(h \#\eta) \to {\rm HF}_*(g \# \eta'), \]
one needs $g \# \eta' \geq h \# \eta$ pointwise. Since it is not obvious how to isolate $\eta$ and $\eta'$ from the conformal factors $\kappa(\phi_h^t)$ and $\kappa(\phi_g^t)$, the two persistence modules may differ a lot, say, in terms of the interleaving distance. \end{remark}

\begin{remark} \label{c-DC} While this paper is close to being completed, we were informed by Dylan Cant that a persistence module from the contact Hamiltonian Floer homology ${\rm HF}_*(M, h; \k)$ was also constructed in his upcoming work \cite{Cant_shelukhin}, with a certain amount of similarity to our construction. More precisely, he also starts from the contact Hamiltonians in a similar form of (\ref{comp}). Different from our gapped module approach proposed in this section with more details in Section \ref{sec-gap-mod}, he is able to obtain a standard persistence module parametrized by $\R$ or a dense subset of $\R$, via twisted cylinders along a homotopy $\{h\#\eta_{s}\}_{s \in [0,1]}$ from $h\#\eta$ to $h\# \eta'$ for any two $\eta \leq \eta'$. For more details, see \cite[Section 2]{Cant_shelukhin}. This overlap leads to several similar applications as in our Section \ref{sec-mra}. \end{remark}

To end this section, let us emphasize the following point. The contact Hamiltonian dynamics on $(M, \xi = \ker \alpha)$ is sensitive to the contact 1-form $\alpha$ (that gives the same contact structure $\xi$). For instance, for different such contact 1-forms $\alpha$ and $\beta$, the corresponding Reeb flows $\phi_{R, \alpha}^t$ and $\phi_{R, \beta}^t$ may differ a lot by a simple calculation (or see Proposition 2.14 in \cite{Oh-21}). However, it is easy to see that for any $\eta \in \R$, we have 
\[ \max_M \left|(\eta \#_{\alpha} h)_t - (\eta \#_{\beta} h)_t \right|  \leq {\rm osc}(h) \]
for any $t \in [0,1]$, where $\#_{\alpha}$ denotes the sum in (\ref{comp}) under the contact 1-form $\alpha$, similarly to $\#_{\beta}$. This implies that ${\rm osc}(h)$-gapped modules $\mathbb V_{h, \alpha}$ and $\mathbb V_{h, \beta}$, with respect to $\alpha$ and $\beta$ respectively, are at most ${\rm osc}(h)$-interleaved (see Definition \ref{dfn-gap-inter}). 

\section{Main results and applications} \label{sec-mra} Let $(M, \xi= \ker \alpha)$ be a Liouville fillable contact manifold with a fixed filling $(W,\lambda)$. Before we present the main results of this paper, let us introduce some notations. For two Hamiltonian functions $h, g: [0,1] \times M \to \R$, similarly to the definition of ${\rm osc}(h)$ and ${\rm osc}(g)$ as in (\ref{gap-cond}), define 
\begin{equation} \label{dfn-osc}
{\rm osc}(h,g) = \int_0^1 \max_M h_t - \min_M g_t \,dt.
\end{equation}
Similarly, one defines ${\rm osc}(g,h)$. Observe that ${\rm osc}(h,g) + {\rm osc}(g,h) = {\rm osc}(h) + {\rm osc}(g) \geq 0$, so, at least one of ${\rm osc}(h,g)$ and ${\rm osc}(g,h)$ must be non-negative. Now, denote by 
\begin{equation} \label{dfn-m-hg}
m_{h,g} : = 2\max\{{\rm osc}(g,h), {\rm osc}(h, g)\} (\geq 0).
\end{equation}

Here is the first main result of the paper. 

\begin{theorem} \label{intro-thm-1} For any admissible contact Hamiltonian $h: [0,1] \times M \to \R$, there exists an ${\rm osc}(h)$-gapped module (see Definition \ref{dfn-gap-mod}) associated to $h$, denoted by $\mathbb V_h$. Moreover, two such gapped modules $\mathbb V_h$ and $\mathbb V_g$ are $m_{h,g}$-interleaved (see Definition \ref{dfn-gap-inter}) if $h \neq g$ and $0$-interleaved if $h = g$.\end{theorem}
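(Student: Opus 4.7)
The plan is to build $\mathbb V_h$ directly from the data already laid out in Section \ref{sec-construction}, and then to deduce the interleaving from a two-Hamiltonian analog of that same pointwise comparison.

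\textbf{Construction of $\mathbb V_h$.} For each $\eta \in \R \setminus \mathcal S_h$, Lemma \ref{lemma-spec} guarantees that $\eta \# h$ is admissible, so ${\rm HF}_*(\eta \# h)$ is defined via the completion as in (\ref{MU-def}). For $\eta \leq_{{\rm osc}(h)} \lambda$ as in (\ref{partial-order}), the computation displayed just before (\ref{gap-cond}) furnishes the inequality $(\eta \# h)_t \leq (\lambda \# h)_t$, and hence by the monotone continuation property of \cite{U-Viterbo} a well-defined Floer continuation map
\begin{equation*}
\iota^h_{\eta,\lambda}: {\rm HF}_*(\eta \# h) \to {\rm HF}_*(\lambda \# h).
\end{equation*}
The cocycle identity $\iota^h_{\lambda,\mu} \circ \iota^h_{\eta,\lambda} = \iota^h_{\eta,\mu}$ for $\eta \leq_{{\rm osc}(h)} \lambda \leq_{{\rm osc}(h)} \mu$ follows from the standard homotopy-of-monotone-homotopies argument for the lifts to $\widehat W$, and together with $\iota^h_{\eta,\eta} = \mathds{1}$ furnishes the structure of an ${\rm osc}(h)$-gapped module in the sense of Definition \ref{dfn-gap-mod}.

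\textbf{Construction of the interleaving maps.} Set $\delta := \tfrac{1}{2}m_{h,g} = \max\{{\rm osc}(h,g), {\rm osc}(g,h)\}$. Running the same computation as above but with one occurrence of $h$ replaced by $g$ yields
\begin{equation*}
((\eta + \delta) \# g)_t - (\eta \# h)_t = \delta - \bigl(h_t \circ \phi_R^{-\eta t} - g_t \circ \phi_R^{-(\eta+\delta) t}\bigr),
\end{equation*}
which is non-negative once $\delta \geq {\rm osc}(h,g)$, by the definition (\ref{dfn-osc}). Monotone continuation thus supplies a map $\Phi^{h,g}_\eta : {\rm HF}_*(\eta \# h) \to {\rm HF}_*((\eta+\delta) \# g)$, and symmetrically $\Phi^{g,h}_\lambda : {\rm HF}_*(\lambda \# g) \to {\rm HF}_*((\lambda + \delta) \# h)$, for all admissible shifts; generic perturbations of $\eta, \lambda$ keep the endpoints outside $\mathcal S_h \cup \mathcal S_g$ by Lemma \ref{lemma-spec}.

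\textbf{Verification of the axioms.} The composition $\Phi^{g,h}_{\eta+\delta} \circ \Phi^{h,g}_\eta$ is the continuation map for the concatenated monotone homotopy from $\eta \# h$ through $(\eta+\delta) \# g$ to $(\eta + 2\delta) \# h$; by the standard homotopy-of-homotopies argument it agrees with $\iota^h_{\eta, \eta + 2\delta}$, and the symmetric composition with $\iota^g_{\lambda, \lambda + 2\delta}$. This is exactly the defining axiom of an $m_{h,g}$-interleaving in the forthcoming Definition \ref{dfn-gap-inter}. When $h = g$, both ${\rm osc}(h,g)$ and ${\rm osc}(g,h)$ vanish, $\delta = 0$, and the relevant maps reduce to identities, giving a $0$-interleaving.

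\textbf{Main obstacle.} The technical heart of the argument is to ensure that the compactness and homotopy invariance statements used to identify the two continuation maps in question carry over cleanly at the symplectization level, where the relevant Hamiltonians only take the prescribed form $e^r h$ (respectively $e^r g$) for $r$ large. One has to choose the interpolating Hamiltonians so that their lifts to $\widehat W$ maintain linear-in-$r$ slopes with controlled behavior on the convex end, and that the homotopy stays admissible; this is where the no-escape compactness arguments of \cite{MU, U-Viterbo} enter and must be invoked carefully, since a careless interpolation between $\eta \# h$ and $(\eta + \delta) \# g$ could introduce $1$-periodic orbits escaping to infinity.
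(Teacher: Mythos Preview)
Your construction of $\mathbb V_h$ and the two-Hamiltonian pointwise comparison are exactly what the paper does, but your verification of the interleaving does not match Definition~\ref{dfn-gap-inter}. In that definition, an $m_{h,g}$-interleaving is formulated on an almost optimal $m_{h,g}$-gapped sequence $\mathfrak a=\{\eta_i\}$, with consecutive terms differing by $\approx m_{h,g}$; the maps $\phi_i$ must go from $\mathbb V_h(\mathfrak a)_{\eta_i}$ to $\mathbb V_g(\mathfrak a)_{\eta_{i+1}}$, so each map shifts by $\approx m_{h,g}$ and the round-trip by $\approx 2m_{h,g}$. Your maps $\Phi^{h,g}_\eta$ shift by only $\delta=\tfrac12 m_{h,g}$, and their target $(\eta+\tfrac12 m_{h,g})\#g$ is not a term of the $m_{h,g}$-gapped sequence, so they are not the $\phi_i$ of Definition~\ref{dfn-gap-inter}. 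Nor can you reinterpret them as a $\tfrac12 m_{h,g}$-interleaving: the definition requires $\delta\geq\max\{{\rm osc}(h),{\rm osc}(g)\}$, and $\tfrac12 m_{h,g}$ can violate this (take $h$ with range $[0,10]$ and $g\equiv 5$, so $\tfrac12 m_{h,g}=5<10={\rm osc}(h)$). There is also a slip in the $h=g$ case: ${\rm osc}(h,h)={\rm osc}(h)$, not zero, so $m_{h,h}=2\,{\rm osc}(h)$ and your $\delta$ does not vanish; this is precisely why Definition~\ref{dfn-gap-inter} singles out $\delta=0$ for $\mathbb V=\mathbb W$ as a separate clause, which the paper's proof invokes explicitly.

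The fix is immediate and recovers the paper's argument: take $\delta=m_{h,g}$ rather than $\tfrac12 m_{h,g}$. Your pointwise comparison yields $((\eta_i+m_{h,g})\#g)_t\geq(\eta_i\#h)_t$ a fortiori, so the continuation map ${\rm HF}_*(\eta_i\#h)\to{\rm HF}_*(\eta_{i+1}\#g)$ exists directly, and the homotopy-of-homotopies identification then produces the commuting triangle with $\iota^h_{\eta_i,\eta_{i+2}}$ that Definition~\ref{dfn-gap-inter} demands. The $h=g$ case is handled tautologically by $\phi=\psi=\mathds{1}$. Finally, your ``main obstacle'' is not one: the monotone continuation maps and their functoriality under concatenation are already packaged in \cite{MU,U-Viterbo}, and the paper simply invokes them without further comment.
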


The algebraic preparation for the proof of Theorem \ref{intro-thm-1} occupies the entire Section \ref{sec-gap-mod}, which builds up the general theory of gapped modules. The detailed proof of Theorem \ref{intro-thm-1} will be given in Section \ref{sec-proof-main}. Similarly to the standard persistence module, one can read off numerical data from $\mathbb V_h$. In this paper, we will be mainly interested to the contact spectral invariant $c(a, \mathbb V_h)$, as the spectral invariant of the ${\rm osc}(h)$-gapped module $\V_h$ (see Definition \ref{dfn-si-gap}). Here, $a$ is a class belonging to the limit 
\begin{equation} \label{final-slice}
 (\V_h)_{\infty} = {\rm SH}_*(W; \k).
 \end{equation}
Note that (\ref{final-slice}) holds due to the definition of ${\rm SH}_*$ in terms of the direct limit (see \cite{Viterbo-spec}) and this limit is independent of the choice of contact Hamiltonian function $h$. 

\medskip

The following several main results list key properties of $c(a, \mathbb V_h)$ which justify the name  - contact spectral invariant. 
 
\begin{theorem} [Stability] \label{prop-stability} Let $(M, \xi)$ be a contact manifold with a Liouville filling $(W, \lambda)$. Then, for any admissible contact Hamiltonians $h, g: [0,1] \times M \to \R$ and a non-zero class $a \in {\rm SH}_*(W)$, we have 
\[ |c(a, \mathbb V_h) - c(a, \mathbb V_g)| \leq 2\max\{{\rm osc}(g,h), {\rm osc}(h, g)\}. \] 
In particular, when $h = g$, we have the obvious equality $c(a,h) = c(a,g)$. 
\end{theorem}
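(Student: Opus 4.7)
My plan is to reduce the stability estimate to Theorem \ref{intro-thm-1} together with a general Lipschitz-type principle for spectral invariants of gapped modules. By Theorem \ref{intro-thm-1}, the gapped modules $\V_h$ and $\V_g$ are $m_{h,g}$-interleaved in the sense of Definition \ref{dfn-gap-inter}, where $m_{h,g} = 2\max\{{\rm osc}(g,h), {\rm osc}(h,g)\}$ by (\ref{dfn-m-hg}). Consequently, the theorem reduces to the purely algebraic assertion that if two gapped modules $\V$ and $\V'$ are $\delta$-interleaved and $a$ is a nonzero class in their common infinity limit, then $|c(a, \V) - c(a, \V')| \leq \delta$; I would develop this as a basic property of the spectral invariant from Definition \ref{dfn-si-gap} in Section \ref{sec-gap-mod}, then apply it with $\delta = m_{h,g}$.

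To establish this Lipschitz property, I would unpack the definition. Along any almost optimal restriction $\{(\V_h)_{\eta_i}\}_{i \in \Z}$, the invariant $c(a, \V_h)$ is the infimum of those $\eta_i$ for which $a$ already lies in the image of the natural map $\iota^h_i \co (\V_h)_{\eta_i} \to (\V_h)_\infty = {\rm SH}_*(W;\k)$ coming from (\ref{final-slice}). A $\delta$-interleaving furnishes morphisms $\Phi_i \co (\V_h)_{\eta_i} \to (\V_g)_{\tau}$ with $\tau \leq \eta_i + \delta$, and symmetrically $\Psi_j \co (\V_g)_{\tau_j} \to (\V_h)_{\eta}$ with $\eta \leq \tau_j + \delta$, intertwining the structure maps up to shift by $\delta$. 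The core input I would need is that, after passing to the direct limit, both $\Phi$ and $\Psi$ agree with the identity on ${\rm SH}_*(W;\k)$ under the canonical identification (\ref{final-slice}). Granted this, if $a \in \im(\iota^h_i)$ then $\Phi_i$ witnesses $a \in \im(\iota^g_{\tau})$ for some $\tau \leq \eta_i + \delta$, yielding $c(a, \V_g) \leq c(a, \V_h) + \delta$; the symmetric inequality closes the argument. The case $h = g$ is immediate since $m_{h,g} = 0$ forces $\V_h$ and $\V_g$ to be $0$-interleaved.

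The main obstacle will be the compatibility step with (\ref{final-slice}). The interleaving morphisms produced in the proof of Theorem \ref{intro-thm-1} in Section \ref{sec-proof-main} come from monotone Floer continuation on $\widehat{W}$ between Hamiltonians of the form $r \cdot (\eta \# h) + C$ and $r \cdot (\tau \# g) + C'$ at sufficiently large $r$, so verifying the compatibility amounts to a diagram chase among monotone continuation maps between Hamiltonians of progressively larger slope. The guiding fact is that continuation maps into ${\rm SH}_*(W;\k)$, being colimits of monotone Floer continuation on a Liouville filling, are independent of the chosen monotone homotopy; applying this to the composites $\Psi \circ \Phi$ and $\Phi \circ \Psi$ forces the limiting maps to be the identity. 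Once this compatibility is in hand, combining it with the algebraic Lipschitz principle produces the stated bound.
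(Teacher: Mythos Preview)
Your approach is essentially the paper's: the paper states that Theorem \ref{prop-stability} is an immediate corollary of Theorem \ref{intro-thm-1} combined with the algebraic stability result Proposition \ref{prop-alg-stability}, which is precisely your Lipschitz-type principle for spectral invariants of interleaved gapped modules. Your implementation of that Lipschitz step via the image characterization (\ref{dfn-si-classical}) is slightly more direct than the paper's barcode-restriction argument in the proof of Proposition \ref{prop-alg-stability}; the compatibility with (\ref{final-slice}) that you single out is indeed the one point needing care, and it holds for exactly the reason you give, since the interleaving maps in the proof of Theorem \ref{intro-thm-1} are monotone Floer continuation maps.
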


In fact, Theorem \ref{prop-stability} is an immediate corollary of Theorem \ref{intro-thm-1}, plus an algebraic stability - Proposition \ref{prop-alg-stability}. Keeping track of its proof, one obtains the following monotonicity property. 

\begin{cor} [Monotonicity] \label{cor-monotone} Let $(M, \xi)$ be a contact manifold with a Liouville filling $(W, \lambda)$. Then, for any contact Hamiltonians $h, g: [0,1] \times M \to \R$ with $h_t \leq g_t$ pointwise, we have $c(a, \mathbb V_h) \leq  c(a, \mathbb V_g)$ for any a non-zero class $a \in {\rm SH}_*(W)$, \end{cor}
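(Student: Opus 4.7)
The plan is to extract monotonicity from the proof of Theorem \ref{prop-stability} by observing that when $h_t \leq g_t$ pointwise, one of the two shifts in the interleaving between $\mathbb V_h$ and $\mathbb V_g$ becomes zero. Indeed, for every $\eta \in \R$, formula (\ref{comp}) gives
\[
(\eta \# h)_t - (\eta \# g)_t = \bigl(h_t - g_t\bigr)\circ \phi_R^{-\eta t} \leq 0,
\]
so the pointwise inequality $(\eta \# h)_t \leq (\eta \# g)_t$ holds for all $\eta \in \R$ simultaneously. Consequently, the monotone continuation property of contact Hamiltonian Floer homology (cf.\ \cite{U-Viterbo}) yields a $\k$-linear map
\[
\Phi_\eta \co {\rm HF}_*(M, \eta \# h; \k) \to {\rm HF}_*(M, \eta \# g; \k)
\]
for every $\eta \in \R \setminus (\mathcal S_h \cup \mathcal S_g)$, with no additional shift needed beyond choosing $\eta$ outside both spectra.

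Next I would check that the family $\{\Phi_\eta\}$ is compatible with the persistence structures of $\mathbb V_h$ and $\mathbb V_g$. A standard homotopy-of-homotopies argument — identical in form to the one used in the proof of Theorem \ref{intro-thm-1} to produce the interleaving morphisms — shows that for any $\eta \leq_{\max\{{\rm osc}(h),{\rm osc}(g)\}} \lambda$ in the partial order of (\ref{partial-order}), the square formed by $\Phi_\eta$, $\Phi_\lambda$, and the structural maps $\iota_{\eta,\lambda}^h$, $\iota_{\eta,\lambda}^g$ commutes. Selecting an almost optimal sequence $\mathfrak a$ adapted to both $\mathcal S_h$ and $\mathcal S_g$, this packages the $\Phi_\eta$ into a morphism of gapped modules $\mathbb V_h \to \mathbb V_g$ of shift $0$ (rather than the shift $m_{h,g}$ used in the two-sided interleaving of Theorem \ref{intro-thm-1}).

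Passing to the limit on both sides, (\ref{final-slice}) identifies $(\mathbb V_h)_\infty$ and $(\mathbb V_g)_\infty$ with ${\rm SH}_*(W; \k)$; because ${\rm SH}_*$ is defined as the direct limit over increasing slopes and the $\Phi_\eta$ are compatible with these limit systems, the induced map on $(\V_\bullet)_\infty$ is the identity of ${\rm SH}_*(W; \k)$. An algebraic monotonicity lemma for spectral invariants of gapped modules — a zero-shift morphism that is the identity on the limit class $a$ forces $c(a, \mathbb V_h) \leq c(a, \mathbb V_g)$ — then delivers the conclusion; this lemma is the unshifted special case of Proposition \ref{prop-alg-stability} and follows directly from Definition \ref{dfn-si-gap} by comparing the two sublevel filtrations slice-by-slice.

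The main obstacle, and the only substantive step, is the compatibility of $\{\Phi_\eta\}$ with the two different partial orders $\leq_{{\rm osc}(h)}$ and $\leq_{{\rm osc}(g)}$: one has to choose the auxiliary parameter sequence $\mathfrak a$ so that it lies in $\R \setminus (\mathcal S_h \cup \mathcal S_g)$ and so that the gap condition (\ref{gap-seq}) is satisfied for both $h$ and $g$ simultaneously (which is automatic taking gaps $\geq \max\{{\rm osc}(h),{\rm osc}(g)\}$). Once this selection is made, the standard Floer-theoretic commutation of continuation maps completes the argument.
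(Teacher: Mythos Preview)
Your strategy is exactly what the paper intends: the paper gives no argument beyond the remark that monotonicity follows by ``keeping track of'' the proof of Theorem~\ref{prop-stability}, and you unpack this correctly. When $h_t \leq g_t$, formula (\ref{comp}) gives $(\eta \# h)_t \leq (\eta \# g)_t$ for every $\eta$ simultaneously, so one leg of the interleaving from Theorem~\ref{intro-thm-1} collapses to a zero-shift morphism $\{\Phi_\eta\}$; the compatibility with the structural continuation maps and with the canonical maps to ${\rm SH}_*(W)$ is the standard Floer-theoretic commutation of continuation maps, exactly as in the proof of Theorem~\ref{intro-thm-1}. So structurally your proof and the paper's intended proof coincide.

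There is, however, a direction issue in your final algebraic step that you should sort out carefully. A zero-shift morphism $\Phi\colon \mathbb V_h \to \mathbb V_g$ that induces the identity on the limit says: whenever $a$ lies in $\mathrm{Im}\bigl({\rm HF}_*(\eta \# h) \to {\rm SH}_*(W)\bigr)$, it also lies in $\mathrm{Im}\bigl({\rm HF}_*(\eta \# g) \to {\rm SH}_*(W)\bigr)$ via $\Phi_\eta$. Taking infima in the description (\ref{dfn-si-classical}) this yields $c(a, \mathbb V_g) \leq c(a, \mathbb V_h)$, the \emph{reverse} of the inequality you wrote down. (Sanity check with $h=0$ and $g=s>0$ constant: $(\mathbb V_s)_\eta = {\rm HF}_*(\eta+s)$, so every infinite bar of $\mathbb V_0$ is translated to the \emph{left} by $s$ in $\mathbb V_s$, giving a smaller spectral value.) Thus the morphism you construct actually proves the opposite inequality to the one asserted; it is worth reconciling this against the sign conventions in force, since the stated direction here is used in tandem with the stated sign in Theorem~\ref{prop-shift}, and the two choices cancel in the applications (e.g.\ in the proof of Corollary~\ref{cor-sh-order}).
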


On the other hand, letting $g = 0$ in Theorem \ref{prop-stability}, we have the following result. 

\begin{cor} [Stability with respect to the Shelukhin-Hofer norm] \label{cor-Egor} Let $(M, \xi)$ be a contact manifold with a Liouville filling $(W, \lambda)$. Then, for any admissible contact Hamiltonians $h: [0,1] \times M \to \R$ and a non-zero class $a \in {\rm SH}_*(W)$, we have 
\begin{equation} \label{est-Egor}
|c(a, \mathbb V_h) - c(a, \mathbb V_0)| \leq 2 \int_0^1 \max_M |h_t| \,dt.
\end{equation}
\end{cor}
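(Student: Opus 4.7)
The plan is to read off this estimate directly from Theorem~\ref{prop-stability} by specializing $g \equiv 0$; the remainder is straightforward bookkeeping.

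First, I would simplify the two oscillation quantities via definition~(\ref{dfn-osc}). Since $\max_M 0 = \min_M 0 = 0$, one has
\begin{align*}
{\rm osc}(h, 0) &= \int_0^1 \max_M h_t\, dt, \\
{\rm osc}(0, h) &= -\int_0^1 \min_M h_t\, dt = \int_0^1 \max_M(-h_t)\, dt.
\end{align*}
Next, since $\max_M h_t \le \max_M |h_t|$ and $\max_M(-h_t) \le \max_M |h_t|$ pointwise in $t$, integrating gives
\[ \max\{{\rm osc}(h, 0),\, {\rm osc}(0, h)\} \le \int_0^1 \max_M |h_t|\, dt. \]
Multiplying by $2$ and substituting into the estimate of Theorem~\ref{prop-stability} yields~(\ref{est-Egor}).

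The only point requiring attention is that $\mathbb V_0$ be well-defined so that $c(a, \mathbb V_0)$ makes sense. For $g \equiv 0$, the contact Hamiltonian $\eta \# 0$ reduces to the constant $\eta$, which generates the reparametrized Reeb flow $\phi_R^{\eta t}$; admissibility fails exactly when $\eta$ is a closed Reeb period, and Lemma~\ref{lemma-spec} confines these values to a discrete set $\mathcal S_0 \subset \R$. Since ${\rm osc}(0) = 0$, the gap condition~(\ref{gap-seq}) degenerates to strict monotonicity, so any strictly increasing sequence $\mathfrak a \subset \R \setminus \mathcal S_0$ produces a genuine $0$-gapped module $\mathbb V_0$ with a well-defined spectral invariant $c(a, \mathbb V_0)$. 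Beyond this verification, no serious obstacle is anticipated—the corollary is essentially an algebraic simplification of the preceding theorem.
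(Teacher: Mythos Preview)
Your proposal is correct and follows essentially the same approach as the paper: specialize Theorem~\ref{prop-stability} to $g\equiv 0$, compute ${\rm osc}(h,0)=\int_0^1\max_M h_t\,dt$ and ${\rm osc}(0,h)=\int_0^1(-\min_M h_t)\,dt$, and bound each by $\int_0^1\max_M|h_t|\,dt$. Your additional paragraph verifying that $\mathbb V_0$ is well-defined is not in the paper's proof but is a reasonable sanity check and causes no harm.
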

\begin{proof} By Theorem \ref{prop-stability}, we have
\begin{align*}
|c(a, \mathbb V_h) - c(a, \mathbb V_0)| & \leq 2\max\left\{\int_0^1 -\min_M h_t\, dt, \int_0^1 \max_M h_t \,dt\right\} \\
& \leq 2 \int_0^1 \max_M |h_t| \,dt 
\end{align*}
since $\max\{-\min_M h_t, \max_M h_t\} \leq \max_M |h_t|$ for each $t \in [0,1]$. 
\end{proof}

The estimation (\ref{est-Egor}) from Corollary \ref{cor-Egor} does {\rm not} imply that when $h \to 0$, we have $c(a, \mathbb V_h) \to 0$ since the term $c(a, \mathbb V_0)$ may not be zero (see the computational Example in Section \ref{sec-ex}). This is an essential difference from the spectral invariant in symplectic geometry. Moreover, the upper bound in (\ref{est-Egor}) is considered by Shelukhin in \cite{Egor-cont} to formulate a Hofer-like norm on the contactomorphism group. To our best knowledge, $c(a, \mathbb V_h)$ is the first numerical invariant in contact geometry that is stable with respect to Shelukhin's Hofer-like norm. 

\begin{theorem}[Shift] \label{prop-shift} Let $(M, \xi)$ be a contact manifold with a Liouville filling $(W, \lambda)$. Then, for any contact Hamiltonians $h: [0,1] \times M \to \R$ and a class $a \in {\rm SH}_*(W)$, we have $$c(a, \mathbb V_{s \#h}) = c(a, \mathbb V_h) + s$$ for any $s \in \R$. Here, recall that $s\#h = h \circ \phi_R^{-st} + s$. \end{theorem}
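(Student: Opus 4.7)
The plan is to reduce the shift formula to the purely algebraic identity
\[ \eta\#(s\#h) = (\eta+s)\#h, \]
which realises $\mathbb V_{s\#h}$ as a literal reindexing of $\mathbb V_h$, and then to invoke the elementary translation-invariance of the spectral invariant of a gapped module.

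First, I would verify $\eta\#(s\#h)=(\eta+s)\#h$ directly from (\ref{comp}). Since $s$ is constant, $\kappa(\phi_R^{st})\equiv 0$, so $s\#h=s+h_t\circ\phi_R^{-st}$; applying the same formula once more with constant $\eta$ gives $\eta\#(s\#h)=(\eta+s)+h_t\circ\phi_R^{-(\eta+s)t}$, which is exactly $(\eta+s)\#h$. Equivalently, both sides generate the isotopy $\phi_R^{(\eta+s)t}\circ\phi_h^t$.

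Second, I would read off the compatibility of the entire gapped-module structure with the translation $\eta\mapsto\eta+s$. The oscillation is preserved, ${\rm osc}(s\#h)={\rm osc}(h)$, since adding a constant and pre-composing with a Reeb diffeomorphism do not change the pointwise max or min at each time $t$. By Lemma \ref{lemma-spec} together with step one, admissibility of $\eta\#(s\#h)$ is equivalent to admissibility of $(\eta+s)\#h$, so $\mathcal S_{s\#h}=\mathcal S_h-s$. In particular every slice ${\rm HF}_*(\eta\#(s\#h))$ literally coincides with the slice ${\rm HF}_*((\eta+s)\#h)$, and the corresponding Floer continuation maps agree because the defining Floer data (the lifted Hamiltonians in the form (\ref{linear-H}) and admissible almost complex structures) are identical at matched parameters. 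Thus, as gapped modules, $\mathbb V_{s\#h}$ is the $s$-reindexing of $\mathbb V_h$, and the colimit identification $(\mathbb V_{s\#h})_\infty=(\mathbb V_h)_\infty={\rm SH}_*(W)$ respects the class $a$, since the cofinal systems $\{(\eta+s)\#h\}_{\eta\to\infty}$ and $\{\eta\#h\}_{\eta\to\infty}$ exhaust the same direct system.

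Third, I would conclude using the elementary shift property of the spectral invariant of an ${\rm osc}(h)$-gapped module developed in Section \ref{sec-gap-mod}: reindexing a gapped module by a global translation of the parameter translates its spectral invariant by the same amount, with the sign dictated by the convention adopted there. Applied to our identification this yields $c(a,\mathbb V_{s\#h})=c(a,\mathbb V_h)+s$ as claimed.

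The main obstacle is really only step one: pinning down the algebraic identity $\eta\#(s\#h)=(\eta+s)\#h$. Once that observation is in hand, no new Floer-theoretic input is needed; the remainder is bookkeeping against the general machinery of gapped modules from Section \ref{sec-gap-mod}. I do not expect any subtlety from admissibility either, since the set of admissible parameters simply translates. A useful sanity check is to compare with Theorem \ref{prop-stability}: the stability estimate applied to $h$ and $s\#h$ gives $|c(a,\mathbb V_{s\#h})-c(a,\mathbb V_h)|\leq 2(\|h\|_\infty+|s|)$, which is consistent with (and weaker than) the exact shift formula proved above.
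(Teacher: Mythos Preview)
Your argument is correct and takes essentially the same route as the paper: both pivot on the identity $\eta\#(s\#h)=(\eta+s)\#h$ and then read off the shift of the spectral invariant from the resulting reindexing of $\mathbb V_h$. The one place where you are looser than the paper is step three. You invoke an ``elementary shift property of the spectral invariant of a gapped module developed in Section~\ref{sec-gap-mod}'', but no such statement is actually recorded there, and because Definition~\ref{dfn-si-gap} takes the infimum only over \emph{normalized} sequences (with $\eta_0\in[0,\lambda]$), a global translation of the parameter does not a priori carry normalized sequences to normalized sequences. The paper handles this bookkeeping explicitly: it fixes a near-optimal normalized sequence $\mathfrak a$ for $\mathbb V_h$, translates it to a sequence $\mathfrak b$ for $\mathbb V_{s\#h}$, deduces one inequality, and then closes up via the symmetric relation $(-s)\#(s\#h)=h$. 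If you prefer to keep your step three abstract, the cleanest fix is to appeal instead to the equivalent characterization~\eqref{dfn-si-classical}, from which the shift is a one-line substitution $\eta\mapsto\eta+s$.
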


Note that Theorem \ref{prop-shift} enables us to easily obtain more non-trivial computational results for the contact spectral invariant $c(-, \mathbb V_s)$ where $s \in \R$ is a constant function, starting from computing $c(-, \mathbb V_0)$. Different from the Hamiltonian dynamics, shifting by a contact $s$ gives rise to the Reeb dynamics. 

\medskip

Recall that $\mathcal S_h (={\rm spec}(\mathcal A_h))$ in (\ref{dfn-RFspec}) denotes the set of the (negative) time shifts of the translated points, which appears in the proof of Lemma \ref{lemma-spec}. 

\begin{theorem} [Spectrality] \label{prop-spectrality} Let $(M, \xi)$ be a contact manifold with a Liouville filling $(W, \lambda)$. Then, for any contact Hamiltonians $h: [0,1] \times M \to \R$ and a non-zero class $a \in {\rm SH}_*(W)$, we have $c(a,\mathbb V_h) \in \mathcal S_h$. \end{theorem}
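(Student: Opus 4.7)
\medskip

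\noindent\textbf{Proof plan for Theorem \ref{prop-spectrality}.} Recall that $c(a,\mathbb V_h)$ is (to be defined in a later section) the spectral invariant associated to a gapped module and a class in its limit; concretely, it should read
\[
c(a,\mathbb V_h) \;=\; \inf\bigl\{\,\eta \in \R \setminus \mathcal S_h \;\bigm|\; a \in \mathrm{im}\bigl(\iota_\eta \co {\rm HF}_*(\eta\#h)\to {\rm SH}_*(W)\bigr)\bigr\},
\]
where $\iota_\eta$ is the canonical map into the direct limit $(\mathbb V_h)_\infty = {\rm SH}_*(W)$. I will argue that the infimum is finite and then that it must land in $\mathcal S_h$.

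\smallskip

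First I would check that the defining set is non-empty and bounded below, so that $c(a,\mathbb V_h) \in \R$. For non-empty: since ${\rm SH}_*(W)$ is realized as the direct limit of $\{{\rm HF}_*(\eta\#h)\}_{\eta\in\R\setminus\mathcal S_h}$ along the gapped poset (\ref{partial-order}) (using that $\eta\#h$ has slope $1$ at infinity and that any cofinal family computes ${\rm SH}_*$), any class $a$ lies in the image of $\iota_\eta$ for all sufficiently large $\eta$. For bounded below: use the monotonicity Corollary~\ref{cor-monotone} applied to the pair $h$ and a constant very negative function, together with the obvious fact that $c(a,\mathbb V_s)$ shifts under $s\in\R$ by Theorem~\ref{prop-shift}, to conclude that $c(a,\mathbb V_h)$ cannot be $-\infty$ once $a\neq 0$.

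\smallskip

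Now set $c := c(a,\mathbb V_h)$ and suppose, for contradiction, that $c \notin \mathcal S_h$. Since $\mathcal S_h$ is closed in $\R$ (Lemma~\ref{lemma-spec}), choose $\delta>0$ with $(c-\delta,c+\delta)\cap \mathcal S_h=\emptyset$. The main step is to show that on this $\delta$-interval the image of $\iota_\eta$ inside ${\rm SH}_*(W)$ is independent of $\eta$. Given $\eta_1<\eta_2$ in $(c-\delta,c+\delta)$, pick $\eta_3\in \R\setminus\mathcal S_h$ with $\eta_3-\eta_j \geq {\rm osc}(h)$ for $j=1,2$; then the gapped-module morphisms $\iota_{\eta_j,\eta_3}\co {\rm HF}_*(\eta_j\#h)\to {\rm HF}_*(\eta_3\#h)$ are both defined, and by the standard Floer-theoretic invariance along the admissible homotopy $s\mapsto ((1-s)\eta_1+s\eta_2)\#h$ (which stays inside $\R\setminus\mathcal S_h$, hence no translated points cross) the triangle commutes up to canonical continuation isomorphism ${\rm HF}_*(\eta_1\#h)\simeq {\rm HF}_*(\eta_2\#h)$. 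Composing with the map into the direct limit ${\rm SH}_*(W)$ gives $\mathrm{im}(\iota_{\eta_1})=\mathrm{im}(\iota_{\eta_2})$.

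\smallskip

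With this independence established, the contradiction is immediate: picking a decreasing sequence $\eta_k\searrow c$ with $\eta_k\in(c,c+\delta)\setminus\mathcal S_h$ and $a\in\mathrm{im}(\iota_{\eta_k})$, one then gets $a\in\mathrm{im}(\iota_\eta)$ for every $\eta\in(c-\delta,c+\delta)\setminus\mathcal S_h$, in particular for some $\eta<c$, contradicting the definition of $c$ as an infimum. Hence $c\in\mathcal S_h$, proving spectrality.

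\smallskip

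The main obstacle I anticipate is the middle step, verifying that the image in ${\rm SH}_*(W)$ really is locally constant on components of $\R\setminus\mathcal S_h$. The subtlety is that the gapped-module structure does \emph{not} provide a direct morphism between ${\rm HF}_*(\eta_1\#h)$ and ${\rm HF}_*(\eta_2\#h)$ when $\eta_2-\eta_1<{\rm osc}(h)$, so one has to detour through an auxiliary $\eta_3$ and invoke the fact that continuation along an admissible homotopy (no new 1-periodic orbits in the symplectization lift) gives a well-defined isomorphism compatible with the maps into ${\rm SH}_*(W)$. The argument parallels the classical reason that symplectic spectral invariants take values in the action spectrum, transported from the action functional $\mathcal A_H$ to the Rabinowitz functional $\mathcal A_h$ via the identification (\ref{dfn-RFspec}).
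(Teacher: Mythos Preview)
Your proposal is correct and follows essentially the same route as the paper's proof: both argue by contradiction, using that ${\rm HF}_*(\eta\#h)$ is locally constant on connected components of $\R\setminus\mathcal S_h$ (via \cite[Theorem~1]{UZ22}, equivalently the zigzag isomorphism of Section~\ref{sec-zigzag}) to push the spectral value strictly below the claimed infimum. The only cosmetic difference is that you work through the classical characterization \eqref{dfn-si-classical} and argue local constancy of $\mathrm{im}(\iota_\eta)\subset{\rm SH}_*(W)$, whereas the paper works directly from Definition~\ref{dfn-si-gap} by rigidly shifting an entire normalized index sequence $\mathfrak a$ by $-\epsilon$ and observing that the resulting restriction $\mathbb V(\mathfrak b_1)$ has $c(a,\mathbb V(\mathfrak b_1))=c(a,\mathbb V(\mathfrak a))-\epsilon$.
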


The proof of Theorem \ref{prop-spectrality} could have been derived directly from Usher's spectrality theorem in \cite{Ush-spec}, however, since our Floer theory admits a non-standard filtration, for completeness, we give a detailed proof in Section \ref{sec-proof-main}. Here is an interesting application of Theorem \ref{prop-spectrality}. 

\begin{cor} [Existence of a translated point] \label{cor-exist-tp} Let $(M, \xi)$ be a contact manifold with a Liouville filling $(W, \lambda)$ with ${\rm SH}_*(W) \neq 0$. Then, for any non-zero contact Hamiltonians $h: [0,1] \times M \to \R$, the contactomorphism $\phi_h^1$ admits a translated point. \end{cor}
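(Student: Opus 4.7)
The plan is to apply Theorem \ref{prop-spectrality} (Spectrality) directly, after a short case analysis on the admissibility of $h$. By the definition of $\mathcal S_h$ in equation (\ref{dfn-RFspec}), the existence of a translated point of $\phi_h^1$ is equivalent to the statement $\mathcal S_h \neq \emptyset$, so the task reduces entirely to verifying non-emptiness of $\mathcal S_h$.

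First I would dispose of the case when $h$ fails the admissible condition. Specializing the discussion after equation (\ref{time-1-translated}) to $\eta = 0$, non-admissibility yields a point $x$ in the convex end with $\kappa(\phi_h^1)(x) = 0$ and $\phi_h^1(x) = x$, namely a translated point of $\phi_h^1$ with time shift zero. Hence $0 \in \mathcal S_h$ and the corollary holds in this case.

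In the complementary case, $h$ is admissible, so the gapped module $\mathbb V_h$ is defined by Theorem \ref{intro-thm-1}. Using the hypothesis ${\rm SH}_*(W) \neq 0$ together with the identification $(\V_h)_{\infty} = {\rm SH}_*(W)$ from equation (\ref{final-slice}), one picks a non-zero class $a$. The contact spectral invariant $c(a, \mathbb V_h)$ is then a well-defined, finite real number; finiteness follows from the general theory of gapped modules developed in Section \ref{sec-gap-mod}, whose definition of $c(a, \mathbb V_h)$ (Definition \ref{dfn-si-gap}) yields a finite value precisely when the chosen class is non-zero in the limit. Applying Theorem \ref{prop-spectrality} gives $c(a, \mathbb V_h) \in \mathcal S_h$, so $-c(a, \mathbb V_h)$ is the time shift of a genuine translated point of $\phi_h^1$.

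The main obstacle here is essentially nonexistent: all the substantive work has been pushed into Theorem \ref{prop-spectrality} and the algebraic setup of gapped modules. The only point that requires verification is the finiteness of the spectral invariant for non-zero limit classes, which is a routine algebraic fact rather than anything specifically contact-theoretic. The non-zero hypothesis on $h$ in the statement plays no real role beyond excluding the trivial case $h \equiv 0$, in which every point of $M$ is vacuously a translated point with time shift zero anyway.
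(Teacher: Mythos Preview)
Your proposal is correct and follows essentially the same route as the paper: pick a non-zero class in ${\rm SH}_*(W)$, observe that its contact spectral invariant is finite, and invoke Theorem~\ref{prop-spectrality} to conclude $\mathcal S_h \neq \emptyset$. The paper's proof uses the unit $1 \in {\rm SH}_*(W)$ specifically and phrases finiteness via the barcode language (an infinite-length bar with finite left endpoint), whereas you use an arbitrary non-zero class and appeal directly to the finiteness remark after Definition~\ref{dfn-si-gap}; these are cosmetic differences.

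One point worth noting: your case analysis on admissibility is a genuine improvement in rigor over the paper's own proof. The paper applies $c(1,\mathbb V_h)$ without comment, but Theorem~\ref{intro-thm-1} as stated only constructs $\mathbb V_h$ for \emph{admissible} $h$. Your observation that non-admissibility of $h = 0\#h$ already produces a translated point with time shift zero (via the discussion around equation~(\ref{time-1-translated})) closes this small gap cleanly. Alternatively one could argue that the gapped module really only needs $\eta\#h$ admissible for $\eta$ outside the discrete set $\mathcal S_h$, not $h$ itself, but your route is more direct.
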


\begin{proof} If ${\rm SH}_*(W) \neq 0$, then for the unit $1$ of ${\rm SH}_*(W)$ and any $h: [0,1] \times M \to \R$, it has $c(1, \mathbb V_h) \neq +\infty$ by definition. In particular, by definition for any gapped sequence $\mathfrak a$, we have $c(1, \mathbb V_h(\mathfrak a)) \neq +\infty$. In terms of the standard barcode of $\mathbb V_h(\mathfrak a)$, there exists an infinite length interval with a finite left endpoint. Then Proposition \ref{prop-spectrality} yields that this left endpoint corresponds to the spectrum of a translated point of $\phi_h^1$, which implies the existence of a translated point of $\phi_h^1$. \end{proof}

\begin{remark} By the recent work \cite{Cant-no-tp}, it is possible that when ${\rm SH}_*(W) = 0$, there exists no translated points at all (so the contact Hamiltonian Floer homology as above does not provide any information, even the homological invisible ones). This reveals a peculiar phenomenon in contact Hamiltonian dynamics, where the (symplectic) Hamiltonian Floer homology never vanishes. \end{remark}

To state the next result, let us recall the Poincar\'e duality on symplectic (co)homology. In terms of the notation from \cite{CO}, we have the following duality, 
\begin{equation} \label{pd-sh}
{\rm PD}: {\rm SH}_*(W) \simeq {\rm SH}^{-*}(W, \partial W). 
\end{equation}
By definition, ${\rm SH}^{-*}(W, \partial W)$ is defined as the inverse limit of Hamiltonian Floer {\it homologies} ${\rm HF}_*(H)$ on the completion $\widehat{W}$, where $H(r,x) = ra + C$ on the cylindrical end of the completion $\widehat{W}$ and a {\it negative} constant function $a <0$. Therefore, we have 
\[ \varprojlim_{\eta} {\rm HF}_*(\eta \#h) = {\rm SH}^{-*}(W, \partial W) \]
where the inverse limit is taken over $\eta \in \R$ when $\eta \to -\infty$ (cf.~(\ref{final-slice})). On the level of chain complexes that define the symplectic (co)homologies, the duality in (\ref{pd-sh}) is induced by the following isomorphism,  
\begin{equation} \label{chain-iso}
{\rm CF}_*(H, J) \simeq {\rm CF}^{-*}(\bar{H}, \bar{J})
\end{equation}
where ${\rm CF}^{-*}(\bar{H}, \bar{J})$ can be algebraically identified with ${\rm Hom}({\rm CF}_{-*}(\bar{H}, \bar{J}); \k)$. Here, $\bar{H}(t,x) := -H(-t, x)$ and $\bar{J}_t := J_{-t}$. More explicitly, the isomorphism in (\ref{chain-iso}) is the identification of close Hamiltonian orbits by reversing the time. A subtle point is that, following the convention in Section 3.1 in \cite{CO}, the algebraic duality does {\it not} change the sign of the filtration since on the cochain complexes one considers super-level sets instead of sublevel sets (as a comparison, following the convention in Section 6.2 in \cite{UZ16}, the algebraic duality changes the sign of the filtration). All in all, here only the Poincar\'e duality in (\ref{chain-iso}) changes the sign of the filtration. 

\begin{theorem}[Duality]\label{prop-duality}  Let $(M, \xi)$ be a contact manifold with a Liouville filling $(W, \lambda)$. Then, for any contact Hamiltonians $h: [0,1] \times M \to \R$ and a class $a \in {\rm SH}_*(W)$, we have 
\[ c(a, \mathbb V_h) = - c({\rm PD}(a), \mathbb V_{\bar{h}}) \]
where ${\rm PD}$ is the Poincar\'e duality in (\ref{pd-sh}) and $\bar{h}(t,x) = -h(-t, x)$. 
\end{theorem}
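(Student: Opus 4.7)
The plan is to lift the chain-level Poincar\'e duality \eqref{chain-iso} to an isomorphism of gapped modules relating $\V_h$ and $\V_{\bar h}$ under the index reversal $\eta\mapsto-\eta$, from which the spectral duality is essentially a tautology. The starting input is the behavior of time reversal on slopes: if $H_\eta$ is a lift of $\eta\#h$ with linear slope at infinity and $\bar H_\eta(t,\cdot):=-H_\eta(-t,\cdot)$, then \eqref{comp} yields
\begin{equation*}
-(\eta\#h)_{-t}(x) \,=\, -\eta - h_{-t}(\phi_R^{\eta t}(x)) \,=\, -\eta + \bar h_t(\phi_R^{\eta t}(x)) \,=\, ((-\eta)\#\bar h)_t(x),
\end{equation*}
so $\bar H_\eta$ has slope $(-\eta)\#\bar h$, and the map $\eta\mapsto-\eta$ restricts to a bijection $\R\setminus\mathcal S_h\to\R\setminus\mathcal S_{\bar h}$. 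Applying \eqref{chain-iso} slicewise then produces chain isomorphisms $T_\eta\colon{\rm CF}_*(H_\eta,J)\xrightarrow{\simeq}{\rm CF}^{-*}(\bar H_\eta,\bar J)$ which, on homology, identify the $\eta$-slice of $\V_h$ with the $\k$-linear dual of the $(-\eta)$-slice of $\V_{\bar h}$.

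Next I would check compatibility with continuation. For $\eta\leq_{{\rm osc}(h)}\eta'$ (using ${\rm osc}(\bar h)={\rm osc}(h)$), the monotone homotopy $\{H_s\}_s$ from $H_\eta$ to $H_{\eta'}$ that defines $\iota^h_{\eta,\eta'}$ has time-reversal $\{\bar H_{-s}\}_s$, which is a monotone homotopy from $\bar H_{\eta'}$ to $\bar H_\eta$ and therefore implements the continuation $\iota^{\bar h}_{-\eta',-\eta}$ on the $\V_{\bar h}$ side. The standard chain-level argument behind \eqref{chain-iso} then shows that $\{T_\eta\}$ intertwines $\iota^h_{\eta,\eta'}$ with the $\k$-dual of $\iota^{\bar h}_{-\eta',-\eta}$. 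Thus $\{T_\eta\}$ assembles into an isomorphism of persistence structures realizing $\V_h$ as the $\k$-linear dual of the index-negated $\V_{\bar h}$. Passing to the direct limit $\eta\to+\infty$ on the left gives $(\V_h)_\infty={\rm SH}_*(W)$, while on the right it gives the $\k$-dual of $\varprojlim_{\mu\to-\infty}{\rm HF}_{-*}(\mu\#\bar h)={\rm SH}^{-*}(W,\partial W)$; in finite dimensions the induced identification is precisely the Poincar\'e duality of \eqref{pd-sh}.

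Finally, for a nonzero $a\in{\rm SH}_*(W)$, set $b:={\rm PD}(a)\in{\rm SH}^{-*}(W,\partial W)$. Its slice-components $b_\mu\in{\rm HF}_{-*}(\mu\#\bar h)$, compatible under the structure maps, make $\{\mu:b_\mu\neq 0\}$ downward closed, so the natural inverse-limit spectral invariant used in Section \ref{sec-gap-mod} is $c(b,\V_{\bar h})=\sup\{\mu:b_\mu\neq 0\}$. Nondegeneracy of the slicewise PD pairing together with the identifications above shows that the annihilator of $\mathrm{im}(\rho_\eta^h\colon{\rm HF}_*(\eta\#h)\to{\rm SH}_*(W))$ inside ${\rm SH}^{-*}(W,\partial W)$ equals the kernel of the structure map to slice $-\eta$; hence $a\in\mathrm{im}(\rho_\eta^h)$ if and only if $b_{-\eta}\neq 0$. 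Consequently
\begin{equation*}
c(a,\V_h)\,=\,\inf\{\eta:b_{-\eta}\neq 0\}\,=\,-\sup\{\mu:b_\mu\neq 0\}\,=\,-c({\rm PD}(a),\V_{\bar h}),
\end{equation*}
which is the claimed identity. The main technical obstacle is the second step: carrying out the chain-level verification that time reversal on $s$-homotopies correctly produces the $\k$-dual of the reversed continuation, with all signs, orientations and reparametrizations in order. The annihilator computation in the last step is then routine linear algebra in finite dimensions.
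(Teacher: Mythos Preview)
Your approach is correct and rests on the same core identity as the paper's proof: the slope computation $\overline{\eta\#h}=(-\eta)\#\bar h$ together with the slicewise Poincar\'e duality \eqref{chain-iso}, which realizes $\mathbb V_h$ as the $\k$-dual of the index-negated $\mathbb V_{\bar h}$. The paper carries this out in exactly the same way at the level of a single normalized ${\rm osc}(h)$-gapped restriction: it fixes $\mathfrak a$ achieving $c(a,\mathbb V_h)$ up to $\epsilon$, sets $\mathfrak b(i)=-\eta_i$, and reads off $c({\rm PD}(a),\mathbb V_{\bar h}(\mathfrak b))=-c(a,\mathbb V_h(\mathfrak a))$ from the barcode description \eqref{def-si} of spectral invariants (noting that PD sends the basis $\{e_i\}$ to $\{{\rm PD}(e_i)\}$ and negates filtration). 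This yields one inequality, and the symmetric argument with $a$ replaced by ${\rm PD}(a)$ gives the other.

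The only real difference is in the final extraction step. The paper works with discrete restrictions and barcodes and closes via an inequality-plus-symmetry argument, whereas you work with the full gapped module, invoke the classical image-criterion \eqref{dfn-si-classical}, and finish with an annihilator computation. Both are valid; yours is perhaps cleaner conceptually, while the paper's stays closer to Definition~\ref{dfn-si-gap}. One small inaccuracy: you refer to ``the natural inverse-limit spectral invariant used in Section~\ref{sec-gap-mod},'' but Section~\ref{sec-gap-mod} never introduces such an object---you are in effect supplying the definition $c(b,\mathbb V_{\bar h})=\sup\{\mu:b_\mu\neq0\}$ for $b\in{\rm SH}^{-*}(W,\partial W)$ yourself. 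This is harmless (and the paper is equally informal about what $c({\rm PD}(a),\mathbb V_{\bar h})$ means for an inverse-limit class), but you should state it as a definition rather than a citation.
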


The proof of Theorem \ref{prop-duality} will be given in Section \ref{sec-proof-main}. Imitating the (symplectic) spectral norm in symplectic geometry, one can define {\bf contact spectral norm} by 
\begin{equation} \label{dfn-sn}
\gamma_{\rm cont} (h) : = c({\rm PD}(1), \mathbb V_h) - c(1, \mathbb V_h)
\end{equation}
where as above $1$ denotes the unit of ${\rm SH}_*(W)$. Further properties and applications of $\gamma_{\rm cont}$ will be explored in the forthcoming work \cite{DUZ-part2}. 

\medskip

The following property is another main results in this paper - the triangle inequality for contact spectral invariants $c(\theta, \mathbb V_h)$. Before stating the inequality, we introduce the following notation:
\begin{enumerate}
    \item Denote by $h\hat{\#}g$ the concatenation of the contact Hamiltonians $h$ and $g$, i.e. $h\hat{\#}g$ is the contact Hamiltonian given by
    \[ \left(h\hat{\#}g\right)_t:= \left\{ \begin{matrix} h_{2t} & \text{for } t\in\left[0,\frac{1}{2}\right]\\ g_{2t-1}&\text{for } t\in\left[\frac{1}{2}, 1\right].\end{matrix} \right.\]
    \item Denote $\displaystyle\overline{\op{osc}}\: h:=\max_t\left(\max_x h_t(x)-\min_{x}h_t(x)\right)$.
\end{enumerate}

\begin{theorem}\label{prop:triangle}
    Let $(M, \xi)$ be a contact manifold with a Liouville filling $(W, \lambda)$ and $h_t, g_t: M\to\R$ be contact Hamiltonians. Let $\theta, \eta\in {\rm SH}_\ast(W)$. Then, 
    \[c(\theta\ast\eta, \mathbb V_{h\hat{\#}g})\leqslant c(\theta, \mathbb V_h) + c(\eta, \mathbb V_g) +3\cdot \max\left\{ \overline{\op{osc}}\: h, \overline{\op{osc}}\: g \right\}.\]
    Here, $\ast$ stands for the pair-of-pants product on ${\rm SH}_\ast(W).$
\end{theorem}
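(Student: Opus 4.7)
The plan is to derive the inequality via a chain-level pair-of-pants product, in the spirit of Schwarz's triangle inequality for Hamiltonian spectral invariants but adapted to the gapped module framework of Theorem~\ref{intro-thm-1}. First, for any $\epsilon>0$, I would pick $\eta_1 > c(\theta,\V_h)$ and $\eta_2 > c(\eta,\V_g)$ in the admissible complement $\R\setminus\mathcal{S}_\bullet$ within $\epsilon$ of the respective spectral invariants, and fix representatives $\widetilde\theta\in{\rm HF}_*(\eta_1\#h)$ and $\widetilde\eta\in{\rm HF}_*(\eta_2\#g)$ descending to $\theta,\eta$ in the common limit ${\rm SH}_*(W)$. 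The goal is then to realize $\theta\ast\eta$ at level $s := \eta_1+\eta_2+3M$ in $\V_{h\hat{\#}g}$, where $M = \max\{\overline{\op{osc}}\,h,\overline{\op{osc}}\,g\}$.

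Next, lifting to the completion $\widehat W$ via $H_i = r\cdot(\eta_i\#\,\cdot\,)$ on the cylindrical end, I would set up a standard pair-of-pants Riemann surface $\Sigma$ with two incoming cylindrical ends (asymptotic to $H_1,H_2$) and one outgoing end, equipped with a sub-closed one-form $\beta$ calibrated so that the Viterbo-type maximum principle on $\widehat W$ applies. Counting perturbed pseudo-holomorphic sections would yield a product
\[
\mu\colon {\rm HF}_*(\eta_1\#h)\otimes{\rm HF}_*(\eta_2\#g) \longrightarrow {\rm HF}_*(H_3)
\]
for any admissible $H_3$ whose contact Hamiltonian slope pointwise dominates the outgoing asymptote of $\Sigma$. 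Taking $H_3$ to be the lift of $s\#(h\hat{\#}g)$, I would verify the required pointwise inequality on each half; for $t\in[0,1/2]$,
\[
(s\#(h\hat{\#}g))_t - (\eta_1\#h)_{2t} \;=\; (s-\eta_1) + \bigl(h_{2t}\circ\phi_R^{-st} - h_{2t}\circ\phi_R^{-2\eta_1 t}\bigr) \;\geq\; (s-\eta_1) - \overline{\op{osc}}\,h,
\]
and analogously on $[1/2,1]$ using $\overline{\op{osc}}\,g$. Then $\mu(\widetilde\theta,\widetilde\eta)$ descends to $\theta\ast\eta$ in ${\rm SH}_*(W)$, giving $c(\theta\ast\eta,\V_{h\hat{\#}g})\leq s = \eta_1+\eta_2+3M$; letting $\epsilon\to 0$ concludes.

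The hard part will be pinning down the constant $3M$ and identifying the correct outgoing asymptote. The natural asymptote of the pair-of-pants is neither the plain concatenation $(\eta_1\#h)\hat{\#}(\eta_2\#g)$ (which would only yield a $\max$-type bound, not the desired additive one) nor the bare sum $\eta_1+\eta_2$; it arises from the interaction between the pair-of-pants geometry---the one-form $\beta$ effectively accumulates both incoming slopes on the outgoing end---and the Reeb shifts hidden inside $\eta_i\#\,\cdot\,$. Threading a chain of at most three monotone continuations, each absorbing one $\overline{\op{osc}}$ bound and terminating in the form $s\#(h\hat{\#}g)$ demanded by $\V_{h\hat{\#}g}$, is the main combinatorial step; it must be arranged so that every intermediate slope lies in $\R\setminus\mathcal{S}_\bullet$, which is achievable by perturbing within the $\epsilon$-slack so that Lemma~\ref{lemma-spec} continues to apply. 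Verifying the maximum principle for the interpolating Hamiltonians on the cylindrical end, once these slope inequalities are in place, is a standard but necessary technical point.
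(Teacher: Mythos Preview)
Your outline has a genuine gap at exactly the point you flag as ``the hard part.'' The direct pointwise comparison you write down,
\[
(s\#(h\hat{\#}g))_t - (\eta_1\#h)_{2t} \;\geq\; (s-\eta_1) - \overline{\op{osc}}\,h,
\]
together with its analogue on $[\tfrac12,1]$, only forces $s\geq \max\{\eta_1+\overline{\op{osc}}\,h,\;\eta_2+\overline{\op{osc}}\,g\}$, as you acknowledge. Your proposed remedy---that ``the one-form $\beta$ effectively accumulates both incoming slopes on the outgoing end,'' to be repaired by ``threading a chain of at most three monotone continuations''---does not work. In the pair-of-pants model used here (Section~\ref{sec-pp}) the admissibility condition for the product is literally $h^{a_1}\hat{\#}h^{a_2}\leq h^b$, a concatenation rather than a sum; no amount of composing monotone continuations, which are themselves pointwise comparisons, will turn a $\max$ bound into an additive one.

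The paper obtains additivity by a different mechanism that your outline misses entirely. First, the inputs $a\#h$ and $b\#g$ must be reparametrized via $\mu$ with $\mu'=0$ near $\{0,1\}$ before the product is even defined; you skip this, but it is not cosmetic---it introduces the factors $\mu'(t)$ that drive the argument. After reparametrization one bounds the concatenated inputs by $\nu'(t)+(\tilde{h}\hat{\#}\tilde{g})\circ\phi_R^{-\nu(t)}$, where $\nu'(t)=\mu_L'(t)(a+\overline{\op{osc}}\,h)+\mu_R'(t)(b+\overline{\op{osc}}\,g)+k$ is a function of $t$ alone with $\mu_L',\mu_R'$ supported on disjoint half-intervals. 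The key step is then the \emph{partial reparametrization isomorphism}: for a contact Hamiltonian $f_t$ depending only on $t$, one has ${\rm HF}_*(f\#\,\cdot\,)\simeq{\rm HF}_*\bigl((\int_0^1 f_t\,dt)\#\,\cdot\,\bigr)$. Since $\int_0^1\mu_L'=\int_0^1\mu_R'=1$, the integral $\int_0^1\nu'=a+b+\overline{\op{osc}}\,h+\overline{\op{osc}}\,g+k$ is additive in $a,b$ even though $\nu'(t)$ never exceeds $\max\{a,b\}+O(1)$ pointwise. This is where the sum $c(\theta,\V_h)+c(\eta,\V_g)$ comes from, and it is not visible in your sketch. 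The constant $3$ then arises because $k=(\max_t\mu')\cdot\max\{\overline{\op{osc}}\,h,\overline{\op{osc}}\,g\}$ and $\mu'$ can be chosen arbitrarily close to $1$.
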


The proof of Theorem \ref{prop:triangle} will given in Section \ref{sec-proof-main} and it relies on the existence of the product map ${\rm HF}_\ast(h)\otimes {\rm HF}_\ast(g)\to {\rm HF}_\ast(h\hat{\#}g)$ from Section~\ref{sec-pp}. In particular, new analysis like the maximum principle on the pant-of-pants will be studied there.

\begin{cor} \label{cor-idemp} Let $(M, \xi)$ be a contact manifold with a Liouville filling $(W, \lambda)$ such that ${\rm SH}_*(W) \neq 0$. Then for any idempotent element $e \in {\rm SH}_*(W)$ (with respect to the pair-of-pants product $\ast$ on ${\rm SH}_*(W)$), we have $c(e, \mathbb V_0) \geq 0$. \end{cor}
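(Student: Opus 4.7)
The approach is to substitute $h = g \equiv 0$ and $\theta = \eta = e$ into the triangle inequality of Theorem \ref{prop:triangle}. Three elementary observations make the right-hand side collapse: by the concatenation formula, $0\,\hat{\#}\,0 = 0$; by inspection, $\overline{\op{osc}}\, 0 = 0$; and by the idempotent hypothesis, $e \ast e = e$. Plugging these in converts Theorem \ref{prop:triangle} into
\[
c(e, \mathbb V_0) \;=\; c(e\ast e, \mathbb V_{0\hat{\#}0}) \;\leq\; c(e, \mathbb V_0) + c(e, \mathbb V_0) + 3\cdot 0 \;=\; 2\,c(e, \mathbb V_0),
\]
and cancelling one copy of $c(e, \mathbb V_0)$ from each side produces exactly the desired inequality $c(e, \mathbb V_0) \geq 0$.

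The only legitimacy check is that this cancellation is valid, i.e.\ that $c(e, \mathbb V_0)$ is finite. Since ${\rm SH}_*(W) \neq 0$ is assumed and the nontrivial case is a non-zero idempotent $e$ (if $e = 0$ the claim is vacuous under the standard convention that spectral invariants are defined on non-zero classes), the construction in Section \ref{sec-gap-mod} guarantees $c(e, \mathbb V_0) > -\infty$; on the other hand, if $c(e, \mathbb V_0) = +\infty$ then the desired inequality is trivial. Hence no obstruction arises from infinities.

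In short, the entire analytic substance of the corollary is contained in Theorem \ref{prop:triangle}; once the triangle inequality for contact spectral invariants is in hand, the derivation is a one-line algebraic manipulation driven purely by the ring-theoretic relation $e \ast e = e$. The only point to be handled carefully in the write-up is the convention regarding non-zero classes, which can be dispensed with in a single sentence.
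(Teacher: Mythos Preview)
Your proof is correct and follows exactly the same approach as the paper: apply Theorem~\ref{prop:triangle} with $h=g=0$ and $\theta=\eta=e$, then use $0\hat{\#}0=0$, $\overline{\op{osc}}\,0=0$, and $e\ast e=e$ to reduce the inequality to $c(e,\mathbb V_0)\leq 2c(e,\mathbb V_0)$. Your additional remark about finiteness is a nice touch that the paper leaves implicit.
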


\begin{proof} By Theorem \ref{prop:triangle}, take $h = g = 0$ and $\theta = \eta =e$, then 
\[ c(e\ast e, \mathbb V_{0\hat{\#}0})\leqslant c(e, \mathbb V_0) + c(e, \mathbb V_0) +3\cdot 0 = 2c(e, \mathbb V_0). \]
Meanwhile, $c(e\ast e, \mathbb V_{0\hat{\#}0}) = c(e, \mathbb V_0)$, therefore, we obtain the desired conclusion. \end{proof}

\begin{remark} Since the unit $\mathds{1} \in {\rm SH}_*(W)$ is an idempotent element, Corollary \ref{cor-idemp} refines the conclusion of Corollary \ref{cor-exist-tp} in the sense that it furthermore estimates the time shift of the existing translated point promised by Corollary \ref{cor-exist-tp} under the condition that ${\rm SH}_*(W) \neq 0$. \end{remark}

Fixing an idempotent element $e \in {\rm SH}_*(W)$, we can balance terms in the triangle inequality in Theorem \ref{prop:triangle} so that it becomes an inequality without extra term appearing. Define 
\begin{equation} \label{ref-dfn-c}
\tilde{c}(e, \mathbb V_h) : = c(e, \mathbb V_h) + 3\cdot \overline{\op{osc}}\: h.
\end{equation}
Then Theorem \ref{prop:triangle} yields that 
\begin{align*}
\tilde{c}(e, \mathbb V_{h \hat{\#} g}) & = c(e \ast e, \mathbb V_{h \hat{\#} g}) + 3 \cdot \overline{\op{osc}}\: (h \hat{\#} g)\\
& \leq c(e, \mathbb V_h) + c(e, \mathbb V_g) + 6\cdot \max\left\{ \overline{\op{osc}}\: h, \overline{\op{osc}}\: g \right\}\\
& \leq \left(c(e, \mathbb V_h) + 3 \cdot \overline{\op{osc}}\: h\right) + \left(c(e, \mathbb V_g) + 3 \cdot \overline{\op{osc}}\: g\right) = \tilde{c}(e, \mathbb V_{h}) + \tilde{c}(e, \mathbb V_{g}).
\end{align*}
In particular, for any fixed idempotent element $e \in {\rm SH}_*(W)$ and a contact Hamiltonian $h: [0,1] \times M \to \R$, the following limit exists,  
\begin{equation} \label{dfn-cont-qs}
\zeta_{\rm cont}(e, h) : = \lim_{k \to \infty} \frac{\tilde{c}\left(e, \mathbb V_{h\hat{\#} \cdots \hat{\#} h} \right)}{k}  = \lim_{k \to \infty} \frac{c\left(e, \mathbb V_{h\hat{\#} \cdots \hat{\#} h}\right)}{k}
\end{equation}
where observe that $\overline{\op{osc}}\: (h\hat{\#} \cdots \hat{\#} h) = \overline{\op{osc}}\: h$ by definition. This quantity $\zeta_{\rm cont}(e, -)$ will serve as a contact geometric analogue of the (partially) symplectic quasi-state defined by Entov-Polterovich in \cite{EP-rigidity}, where we call it a {\bf (partially) contact quasi-state} and will be explored further in \cite{DUZ-part2}, especially on its applications to study the rigidity of subsets in a (Liouville fillable) contact manifold (cf.~\cite{Borman-Zapolsky}). 

\medskip

The next result concerns about the well-definedness of the contact spectral invariant, when it descends to the universal cover of the contactomorphism group ${\rm Cont}(M, \xi)$. 

\begin{theorem}[Descent] \label{thm-descend} Let $(M, \xi)$ be a contact manifold with a Liouville filling $(W, \lambda)$ where $(\widehat{W}, \omega)$ denotes the completion of $(W, \lambda)$. Suppose either $\pi_2({\rm Cont}(M, \xi))$ or $\pi_1({\rm Ham}_c(\widehat{W}, \omega)$ is trivial, then the contact spectral invariant $c(a, -)$ is well-defined on $\widetilde{{\rm Cont}}_0(M, \xi)$ for any class $a \in {\rm SH}_*(W)$. \end{theorem}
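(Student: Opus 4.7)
The plan is, given $h$ and $g$ whose generated contact isotopies $\phi_h^t$ and $\phi_g^t$ represent the same element of $\widetilde{{\rm Cont}}_0(M, \xi)$, to prove $c(a, \mathbb V_h) = c(a, \mathbb V_g)$. By hypothesis, I may choose a smooth homotopy $\{\phi_{h_s}^t\}_{s \in [0,1]}$ of contact isotopies rel endpoints, encoded by a smooth family of contact Hamiltonians $\{h_s\}_{s \in [0,1]}$ with $\phi_{h_s}^1 = \phi$ constant in $s$. The crucial initial observation is that by Lemma \ref{lemma-spec}, the spectrum $\mathcal S_{h_s}$ depends only on the endpoint $\phi_{h_s}^1 = \phi$, so it is constant along the homotopy; denote this common discrete subset of $\R$ by $\mathcal S_\phi$. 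In particular, the set of admissible shift parameters $\R \setminus \mathcal S_\phi$ does not vary with $s$, so all the slices of $\mathbb V_{h_s}$ are simultaneously well-defined for the same gapped sequences.

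The heart of the argument is to construct a filtration-preserving isomorphism $\Psi: \mathbb V_{h_0} \to \mathbb V_{h_1}$ of gapped modules that is compatible with the canonical identification $(\mathbb V_{h_s})_\infty = {\rm SH}_*(W)$ from (\ref{final-slice}). At the Floer-theoretic level, for each admissible $\eta \in \R \setminus \mathcal S_\phi$, one wants a continuation map ${\rm HF}_*(\eta \# h_0) \to {\rm HF}_*(\eta \# h_1)$. Since the homotopy $\{\eta \# h_s\}$ is typically not monotone in $s$, I would construct $\Psi$ by a zig-zag inside the gapped sequence: first raise the shift to some $\eta' > \eta$ large enough to dominate the $s$-oscillations of the family, then use a monotone continuation along $\{\eta' \# h_s\}$, and finally descend back to $\eta$ at $s = 1$ via the gap-compatible structure maps already present in $\mathbb V_{h_1}$. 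The gap condition in (\ref{gap-seq}) is precisely designed to accommodate such adjustments.

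The topological hypothesis enters when verifying that $\Psi$ is independent of the auxiliary homotopy $\{h_s\}$. Two such homotopies rel endpoints differ by a loop in the path space $P(G, id, \phi)$, where $G = {\rm Cont}_0(M, \xi)$; the path-loop fibration identifies the set of homotopy classes of such loops with $\pi_2(G) = \pi_2({\rm Cont}(M, \xi))$. Vanishing of $\pi_2({\rm Cont}(M, \xi))$ ensures that any two such homotopies are themselves homotopic, so the resulting continuation map is canonical on homology. Alternatively, under $\pi_1({\rm Ham}_c(\widehat{W}, \omega)) = 0$, the lifted Hamiltonian isotopies $\Phi_{H_s}^t$ on $\widehat{W}$ (after a standard compactly supported truncation of $H_s$) are automatically all homotopic rel endpoints in ${\rm Ham}_c$, so the usual descent of Hamiltonian spectral invariants from $\widetilde{{\rm Ham}}_c$ to ${\rm Ham}_c$ applies, and this pulls back along the lift $\widetilde{{\rm Cont}}_0(M, \xi) \to \widetilde{{\rm Ham}}_c(\widehat{W}, \omega)$ to give the desired descent on the contact side.

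The main obstacle will be executing the zig-zag continuation rigorously inside the gapped module framework and checking that it is filtration-preserving and compatible with the canonical identification $(\mathbb V_h)_\infty = {\rm SH}_*(W)$. In particular, the homotopy-of-homotopies step that rules out the $\pi_2$-valued ambiguity requires a careful parametric Floer-theoretic analysis along non-monotone deformations of Hamiltonians of the form $\eta \# h_s$, together with a verification that the resulting chain-level composition induces the identity on ${\rm SH}_*(W)$ modulo the assumed vanishing of either $\pi_2({\rm Cont}(M, \xi))$ or $\pi_1({\rm Ham}_c(\widehat{W}, \omega))$.
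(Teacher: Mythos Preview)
Your high-level architecture matches the paper's: build, for each admissible $\eta$, an isomorphism ${\rm HF}_*(\eta\#h_0)\simeq{\rm HF}_*(\eta\#h_1)$ compatible with the canonical maps to ${\rm SH}_*(W)$, and invoke the topological hypothesis to kill the ambiguity in the auxiliary homotopy. Your observation that $\mathcal S_{h_s}$ depends only on the common time-$1$ map is correct and is exactly what makes the level-wise comparison meaningful. However, both of your concrete constructions have genuine gaps.

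For the $\pi_2({\rm Cont}(M,\xi))$ case, the zigzag you describe does not work. Raising the shift to a larger $\eta'$ does not make the family $\{\eta'\#h_s\}_{s\in[0,1]}$ monotone in $s$: the difference $(\eta'\#h_{s_0})-(\eta'\#h_{s_1})=(h_{s_0}-h_{s_1})\circ\phi_R^{-\eta' t}$ has nothing to do with $\eta'$. Worse, your ``descend back to $\eta$'' step would require a structure map in $\mathbb V_{h_1}$ going from a larger to a smaller filtration level, which the gapped module simply does not possess; so at best you would land at some $\eta''>\eta$ and only obtain an inequality $c(a,\mathbb V_{h_1})\leq c(a,\mathbb V_{h_0})+C$, not equality. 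The paper's zigzag (Section~\ref{sec-zigzag}) proceeds differently: it discretizes $\{h_s\}$ and, between consecutive $h^{s_j},h^{s_{j+1}}$, chooses an auxiliary admissible $g^j\leq\min\{h^{s_j},h^{s_{j+1}}\}$ \emph{below} both; the monotone continuation maps ${\rm HF}_*(g^j)\to{\rm HF}_*(h^{s_j})$ and ${\rm HF}_*(g^j)\to{\rm HF}_*(h^{s_{j+1}})$ are isomorphisms because admissibility is an open condition (cf.\ \cite[Theorem~1]{UZ22}). Composing these gives a genuine isomorphism $\mathcal B(\{h_s\}):{\rm HF}_*(\eta\#h_0)\to{\rm HF}_*(\eta\#h_1)$ at the \emph{same} level $\eta$, with no descent needed. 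Two homotopies differ by an element of $\pi_2({\rm Cont}(M,\xi))$, exactly as you say.

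For the $\pi_1({\rm Ham}_c(\widehat W,\omega))$ case, ``compactly supported truncation of $H_s$'' cannot produce a Hamiltonian isotopy with the right behavior: the Hamiltonians $H_s$ are linear at infinity, and truncating them destroys the slope, hence the link to ${\rm HF}_*(h_s)$. The paper's mechanism is different and more delicate: given a contractible contact loop generated by $f$, the Biran--Giroux long exact sequence produces a Hamiltonian $F$ on $\widehat W$ with slope $f$ whose flow is a (possibly non-contractible) \emph{loop} in ${\rm Ham}(\widehat W,\omega)$, and one then uses the chain-level naturality map $\mathcal N(F):\gamma(t)\mapsto(\phi_F^t)^{-1}\gamma(t)$ to identify ${\rm HF}_*(H)$ with ${\rm HF}_*(\bar F\#H)$. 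Two such lifts $F,G$ differ precisely by an element of $\pi_1({\rm Ham}_c(\widehat W,\omega))$, which is where the hypothesis enters. This is not a pullback of the usual symplectic descent along a group homomorphism, and the grading is in general not preserved (the paper drops it in this case).
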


Let us denote the resulting contact spectral invariant by $c(a, [\phi])$ for any class $[\phi] \in \widetilde{{\rm Cont}}_0(M, \xi)$. The triviality of $\pi_2({\rm Cont}(M, \xi))$ or $\pi_1({\rm Ham}_c(\widehat{W}, \omega)$ serves as two different situations where $c(a, -)$ descends. Therefore, the proof of Theorem \ref{thm-descend}, given in Section \ref{sec-more-wd}, will be divided into two parts. 

\begin{remark} It is worth mentioning that in the situation where $\pi_2({\rm Cont}(M, \xi))$ is trivial, the descended $c(a,-)$ is slightly better than the other one since the grading makes sense. On the other hand, these two situations are closely related to each other, since by the Biran-Giroux long exact sequence, we have a well-defined group homomorphism 
\[ \pi_2({\rm Cont}(M, \xi)) \to \pi_1({\rm Ham}_c(\widehat{W}, \omega)). \]
It will be interesting to explore when these two groups coincide. Another interesting point is that we are not aware of any example of a Liouville manifold $(\widehat{W}, \omega)$ where $\pi_1({\rm Ham}_c(\widehat{W}, \omega))$ is non-trivial. \end{remark}

Here is an immediate corollary of Theorem \ref{thm-descend}, related to the {\bf orderability} of a contact manifold, defined by \cite{EP00}. Recall that $(M, \xi)$ is non-orderable if there exists a contractible positive loop in the contactomorphism group ${\rm Cont}(M, \xi)$. In other words, there exists a positive loop, as a representative, of the unit class $\mathds{1} \in \widetilde{{\rm Cont}}_0(M, \xi)$. Here, ``positive'' means that the corresponding contact Hamiltonian is positive pointwisely.

\begin{cor} \label{cor-sh-order} Let $(M, \xi)$ be a contact manifold with a Liouville filling $(W, \lambda)$ where either $\pi_2({\rm Cont}(M, \xi))$ or $\pi_1({\rm Ham}_c(\widehat{W}, \omega))$ is trivial. Suppose ${\rm SH}(W) \neq 0$, then $(M, \xi)$ is orderable. \end{cor}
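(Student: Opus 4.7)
The plan is to argue by contradiction, combining three properties already established for the contact spectral invariant $c(1, -)$: descent (Theorem \ref{thm-descend}), monotonicity (Corollary \ref{cor-monotone}), and the shift formula (Theorem \ref{prop-shift}).

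Suppose that $(M,\xi)$ is non-orderable. By definition this furnishes a contractible positive loop in ${\rm Cont}_0(M,\xi)$, generated by a contact Hamiltonian $h \co [0,1]\times M \to \R$ with $h_t(x) > 0$ pointwise, $\phi_h^1 = \mathrm{id}$, and $\{\phi_h^t\}_{t \in [0,1]}$ null-homotopic in ${\rm Cont}_0(M,\xi)$. By compactness of $M$ pick $\epsilon > 0$ small enough that $h_t(x) \geq \epsilon$ everywhere. Since ${\rm SH}(W) \neq 0$ the unit class $1 \in {\rm SH}(W)$ is non-zero, and Theorem \ref{prop-spectrality} then places $c(1, \mathbb V_0) \in \mathcal{S}_0 \subset \R$; in particular this quantity is a finite real number.

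Under either topological hypothesis, Theorem \ref{thm-descend} says that $c(1, -)$ factors through $\widetilde{\rm Cont}_0(M,\xi)$. Since the contractible loop $\{\phi_h^t\}$ represents the identity class there, descent will yield $c(1, \mathbb V_h) = c(1, \mathbb V_0)$. Monotonicity applied to the pointwise bound $\epsilon \leq h_t$ gives $c(1, \mathbb V_\epsilon) \leq c(1, \mathbb V_h)$, and the shift formula applied to $h \equiv 0$, $s = \epsilon$ (noting $\epsilon \# 0 = \epsilon$) yields $c(1, \mathbb V_\epsilon) = c(1, \mathbb V_0) + \epsilon$. Stringing these together,
\[ c(1, \mathbb V_0) + \epsilon \;=\; c(1, \mathbb V_\epsilon) \;\leq\; c(1, \mathbb V_h) \;=\; c(1, \mathbb V_0), \]
which contradicts $\epsilon > 0$ since $c(1, \mathbb V_0)$ is finite. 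Hence $(M,\xi)$ must be orderable.

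No substantial obstacle is expected: once the three properties are in hand the argument is a one-line tautology. The only care needed is to check that descent indeed sends the contractible positive loop to $\mathds{1} \in \widetilde{\rm Cont}_0(M,\xi)$, which is immediate from the definition of non-orderability, and to ensure that each gapped module involved admits an admissible gapped sequence so that all spectral invariants are defined, which is possible by Lemma \ref{lemma-spec} since $\mathcal{S}_h$ is discrete.
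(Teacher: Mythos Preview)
Your proposal is correct and follows essentially the same route as the paper's own proof: assume a contractible positive loop, use descent (Theorem~\ref{thm-descend}) to identify $c(1,\mathbb V_h)$ with $c(1,\mathbb V_0)$, and then combine the shift formula (Theorem~\ref{prop-shift}) with monotonicity (Corollary~\ref{cor-monotone}) and finiteness of $c(1,\mathbb V_0)$ to reach a contradiction. The only cosmetic difference is that the paper invokes Corollary~\ref{cor-exist-tp} or Corollary~\ref{cor-idemp} for finiteness of $c(1,\mathbb V_0)$, while you use Theorem~\ref{prop-spectrality}; both are valid.
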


\begin{proof} Since ${\rm SH}(W) \neq 0$, the unit $1$ satisfies $c(1, \mathbb V_0)$ is finite number by either the proof of Corollary \ref{cor-exist-tp} or Corollary \ref{cor-idemp}. Assume $\phi = \{\phi_t\}_{t \in [0,1]}$ is a contractible positive loop in ${\rm Cont}(M, \xi)$. Then on the one hand, by Theorem \ref{thm-descend}, we have $c(1, \mathbb V_0) = c(1, [\mathds{1}]) = c(1, [\phi])$, where $\mathds{1}$ denotes the identity in ${\rm Cont}(M, \xi)$. On the other hand, since the corresponding contact Hamiltonian of $\phi$, denoted by $h: = h_t(\phi)$, satisfies $0< h_t(\phi)$ for any $t \in [0,1]$. By the compactness of $M$, there exists some $\delta>0$ such that $h \geq \delta$ pointwise. Then Theorem \ref{prop-shift} and Corollary \ref{cor-monotone} imply that 
\[ c(\mathds{1}, \mathbb V_0) < c(\mathds{1}, \mathbb V_0) + \delta = c(\mathds{1}, \mathbb V_\delta) \leq c(1, \mathbb V_h) = c(1, [\phi]).\]
This is a contradiction. \end{proof}

\begin{remark} A stronger version of Corollary \ref{cor-sh-order} was obtained in \cite[Theorem 1.18]{CCD-R}, where no topological hypothesis on the diffeomorphism is required and the conclusion there is about the strong orderability, a relative version of orderability based on Legendrian submanifolds. Also, \cite{AM} obtains a similar conclusion as in Corollary \ref{cor-sh-order} but via the non-vanishing of the Rabinowitz Floer homology, Our argument above  is closer to the one in \cite{AM}. \end{remark}

We will end this section with a few discussions and remarks. 

\medskip

(a) In this paper, we mainly work on the contact spectral invariant $c(a, \mathbb V_h)$ (Definition \ref{dfn-si-gap}), derived from homological visible information. From the gapped module $\mathbb V_h$, one can also read off homological invisible information. In term of the standard barcode language, this will be some data from finite length bars. This will be explored in the forthcoming work \cite{DUZ-part2}.

\medskip

(b) Despite the Definition \ref{dfn-si-gap}, which appears complicated (since we are using persistence module language to define spectral invariant), it is easily verified via \cite[Proposition 6.6]{UZ16} that $c(a, \mathbb V_h)$ can be equivalently defined or calculated by the following more classical way, 
\begin{equation}\label{dfn-si-classical}
c(a, \mathbb V_h) : = \inf\left\{\eta \in \R \,| \, a \in {\rm Im}(\iota_{\eta}: {\rm HF}_*(\eta \#h) \to {\rm SH}(W)) \right\}. 
\end{equation}
This also holds for the descended definition $c(a, \phi)$ for $\phi \in {\widetilde{{\rm Cont}}}_0(M, \xi)$.

\medskip 

(c) Recall that the symplectic spectral invariant, denoted by $c_{\rm symp}(a, \phi)$, where $\phi \in \widetilde{{\rm Ham}}(W)$, defined from the Hamiltonian Floer homology ${\rm HF}_*(H)$ with $\phi = \{\phi_H^t\}_{t \in [0,1]}$, satisfies a conjugate invariant property
\begin{equation} \label{symp-conj-invariant}
c_{\rm symp}(\psi^{-1}_*(a), \psi^{-1} \circ \phi \circ \psi) = c_{\rm symp}(a, \phi)
\end{equation}
for any symplectomorphism $\psi$. This is essentially due to the fact that the path $\{\psi^{-1} \circ \phi_H^t \circ \psi\}_{t \in [0,1]}$ is generated by the Hamiltonian $H \circ \psi$ and there is a natural filtration-preserving identification of groups ${\rm HF}_*(H)$ and ${\rm HF}_*(H \circ \psi)$. Different in the contact geometry set-up, if we have a contact isotopy $\phi = \{\phi_h^t\}_{t \in [0,1]}$, then it readily verified that $e^{\kappa(\psi)} \cdot (h \circ \psi)$ generates the path $\{\psi^{-1} \circ \phi_h^t \circ \psi\}_{t \in [0,1]}$. Then one verifies furthermore that $c(\psi^{-1}_*(a), \psi^{-1} \circ \phi \circ \psi) =  e^{\kappa(\psi)} c(a, \phi)$. This indicates that in general the conjugate invariant property of the contact does not hold in general. 

In fact, since our contact spectral invariant is $\R$-valued, one should {\it not} expect the validity of the conjugate invariant property; otherwise one would contradict a surprising result from Burago-Ivanov-Polterovich \cite{BIP} that no conjugate invariant norm that can be arbitrarily small exists on ${\rm Cont}(M, \xi)$. 

\medskip

(d) Inspired by Sandon's work \cite{San-non-squeezing} as well as related development in \cite{AM}, one can consider a $\Z$-valued function 
\[ \ceil{c(a, -)}: {\widetilde{{\rm Cont}}}_0(M, \xi) \to \Z.\]
In the forthcoming work \cite{DUZ-part2}, we will confirm that, under certain conditions on the Reeb flow, $\ceil{c(a, \phi)}$ is indeed conjugate invariant, which leads to the notation of a {\bf contact capacity} of a subset $U \subset M$. For applications, we aim to recover the contact non-squeezing phenomenon in \cite{U-selective} in a concise way. 

\section{A computational example of spectral invariant} \label{sec-ex} In this section, we compute the contact spectral invariant $c(a,\mathbb V_h)$ in a concrete case to confirm that this data is not always zero, {\bf even when $h =0$}.

\medskip

Consider $(S^*\mathbb S^2, \xi_{\rm can} = \ker \alpha|_{\rm can})$, the unit co-sphere bundle with respect to the canonical contact structure. It arise as the boundary of a Liouville filling - the unit co-disk bundle of 2-sphere $\mathbb S^2$ denoted by $D^*\mathbb S^2 = D^*_{g_{\rm std}} \mathbb S^2$, with respect to the standard round metric $g_{\rm std}$ on $\mathbb S^2$.  By \cite{Abbondandolo-Schwarz,Abbondandolo-Schwarz-2,Ziller}, the symplectic homology $D^\ast \mathbb{S}^2$ in $\mathbb{Z}_2$-coefficient is given by
\[{\rm SH}_k (D^\ast\mathbb{S}^2)\cong \left\{ \begin{matrix} \mathbb{Z}_2 &  k\in\{0,1\}\\ \mathbb{Z}_2 \bigoplus \Z_2 & k\geq 2\\ 0 & \text{otherwise.} \end{matrix}\right. \] 
We will compute the contact spectral invariant for $h =0$ (hence, ${\rm osc}(h) = 0$), as as explained in Example \ref{ex-lambda0} later we can view symplectic homology ${\rm SH}_\ast(D^\ast\mathbb{S}^2)$ as the following $\R$-parametrized persistence module valued at infinity, 
\[ \V_0 = \left\{{\rm HF}_\ast(S^\ast\mathbb{S}^2, \eta) = {\rm HF}_*(\eta)\right\}_{\eta \in \R}.\]
For simplicity, let us focus on $\eta \in \R_{\geq 0}$. It is well-known that closed Reeb orbits on $S^*\mathbb S^2$ precisely correspond to the closed geodesics on $(S^2, g_{\rm std})$, which implies that the Rabinowitz-Floer spectrum set ${\rm spec}(\mathcal A_h) = \{2\pi m \, |\, m \in \N\}$.

By \cite[Proposition 4.4]{U-Viterbo}, we have
\[{\rm HF}_k(2\pi m+\varepsilon)\cong\left\{ \begin{matrix}  \mathbb{Z}_2 & k\in\{0,1,2m+1, 2m+2\} \\ \mathbb{Z}_2 \oplus \Z_2 & 2\leq k \leq 2m\\ 0 & \text{otherwise}\end{matrix}\right.\]
for $m\in\mathbb{N}$ and any $\varepsilon\in(0,2\pi)$. Moreover, the canonical map (that comes from the composition of Floer continuation maps for consecutive $m$), 
\begin{equation} \label{can-map}
{\rm HF}_k(2\pi m+\varepsilon)\to {\rm SH}_k(D^\ast \mathbb{S}^2)
\end{equation}
is an isomorphism if $2m> k-1$. When $k =0$, the canonical map (\ref{can-map}) is an isomorphism starting from $m =0$. When $k = 1$, the canonical map (\ref{can-map}) is an isomorphism starting from $m =1$. Therefore, denote by $a_0$ and $a_1$ the generators of ${\rm SH}_0(D^\ast \mathbb{S}^2)$ and ${\rm SH}_1(D^\ast \mathbb{S}^2)$, respectively. We have the following easy computation of the barcode. 
\begin{itemize}
\item[(0)] The degree $0$ barcode $\V_0$ is $[\ell, +\infty)$ where $\ell$ is some non-positive constant. This implies that $c(a_0, 0) \leq 0$. 
\item[(1)] The degree $1$ barcode $\V_0$ is $[2 \pi, +\infty)$. This implies that $c(a_1, 0) = 2 \pi$. 
\end{itemize}
Note that for both (0) and (1), the perturbation constant $\ep>0$ is taken to be arbitrarily small. 
 
Now, observe that for persistence module $\V_0$ for degree $k \geq 3$, there is a generator born starting from $m = \left\lfloor\frac{k-1}{2} \right\rfloor$, while another generator born starting from $m = \left\lfloor\frac{k-1}{2} \right\rfloor + 1$. Both of them are homologically essential since by (\ref{can-map}) is an isomorphism when $m \geq \left\lfloor\frac{k-1}{2} \right\rfloor + 1$. Therefore, the barcode of $\V_0$ at degree $k \geq 3$ is the union of the following two intervals, 
\begin{equation} \label{bar-k}
\left[2 \pi \cdot \left\lfloor\frac{k-1}{2} \right\rfloor, +\infty\right) \,\,\,\,\mbox{and}\,\,\,\, \left[2 \pi \cdot \left(\left\lfloor\frac{k-1}{2} \right\rfloor+1\right), +\infty\right).
\end{equation}
Denote by $a_k$ and $b_k$ the generators corresponding to the first interval and the second interval in (\ref{bar-k}), respectively. Then for $k \geq 3$, 
\begin{equation} \label{cis-k}
c(a_k, 0)= 2\pi\cdot\left\lfloor \frac{k-1}{2} \right\rfloor \,\,\,\,\mbox{and}\,\,\,\, c(b_k,0)=  2\pi\cdot\left\lfloor \frac{k-1}{2} \right\rfloor+ 2\pi.
\end{equation}
The only case that needs some special care is when $k =2$. In this case, the generator $a_2 \in {\rm HF}_2(\ep)$ where $m=0$, which only implies that $c(a_2, 0) \leq 0$ and it is easy to verify that $c(b_k,0)= 2\pi$. 


\section{Gapped modules and its restrictions}\label{sec-gap-mod}

Recall that a persistence $\k$-module, in a general sense, is an $\mathcal I$-parametrized family of $\k$-modules denoted by $\mathbb V = \{V_i\}_{i \in \mathcal I}$, where the index set $\mathcal I$ is a subset of $\R$, together with a certain family of $\k$-linear maps. Explicitly, by the partial order $\leq$ of $\R$, for each $i \leq j$ in $\mathcal I$, there is a $\k$-linear map $\iota_{i,j}: V_i \to V_j$ satisfying $\iota_{j,k} \circ \iota_{i,j} = \iota_{i,k}$ (and $\iota_{i,i} = \mathds{1}_{V_i}$). Suppose $\mathcal I$ is totally ordered, then by \cite{C-B-decomposition}, the persistence $\k$-module $\mathbb V$ admits a decomposition as follows, 
\begin{equation} \label{normal form}
\mathbb V = \bigoplus_{I \subset B(\mathbb V)} \k_I 
\end{equation}
where $B(\mathbb V)$, called the barcode of $\mathbb V$, is a collection of some intervals $I$ in $\mathcal I$. Here, an interval $I$ of $\mathcal I$ means that for any $i, k \in I$ and $j \in I$ with $i \leq j \leq k$, then $j \in \mathcal I$. Moreover, $\k_I$ denotes the trivial rank-1 bundle over the interval $I$. For more details on persistence module theory, see \cite{CZ05,CC-SGGO-prox-09,CdeSGO16,PRSZ20}.

\subsection{A general theory on gapped module} Fix a scalar $\lambda \geq 0$ and subset $\mathcal I \subset \R$. For $s, t \in \mathcal I$, denote by $s \leq_{\lambda} t$ the relation either $s =t$ or $s \leq t -\lambda$. This relation defines a partial order on $\mathcal I$, which may not be a total order (since there is no relation between $s$ and $s + \ep$ if $s, s+\ep \in \mathcal I$ with $\ep < \lambda$, for instance). Then we have the following key definition.
 
\begin{dfn} \label{dfn-gap-mod} Fix a field $\k$ and a scalar $\lambda \geq 0$. A {\rm $\lambda$-gapped $\k$-module} $\mathbb V$ consists of the following data 
\[ \mathbb V= (\{V_t\}_{t \in \mathcal I}, \{\iota_{s,t}: V_s \to V_t\}_{s \leq_{\lambda} t}\}) \]
where $V_t$ is a finite dimensional $\k$-module, $\iota_{t,t} = \mathds{1}$ and $\iota_{s,t} \circ \iota_{r,s} = \iota_{r,t}$ for $r \leq_{\lambda} s \leq_{\lambda} t$. \end{dfn}

As mentioned in the introduction (see the end of Section \ref{sec-construction}), a persistence $\k$-module can be defined over a partially order set, say $(\mathcal I, \leq_{\lambda})$ as above, but the decomposition (into interval-type module) result as in (\ref{normal form}) only works for a totally ordered parametrization set. To this end, to extract information from $(\mathcal I, \leq_{\lambda})$ in terms of the barcode, we will restrict our parametrization set from $\mathcal I$ to certain discrete sequences $\mathfrak a = \{\cdots, \eta_i, \eta_{i+1}, \cdots\}$.

\begin{dfn} \label{dfn-restriction} Fix a field $\k$, a scalar $\lambda > 0$, and a $\lambda$-gapped $\k$-module $\mathbb V$. A {\rm $\lambda$-gapped restriction} $\mathbb V(\mathfrak a)$ is a persistence $\k$-module index by a discrete subset 
\[ \mathfrak a  = \{\cdots, \eta_i, \eta_{i+1}, \cdots\} \subset \mathcal I\]
such that $\eta_{i+1} - \eta_i \geq \lambda$ for each $i \in \Z$. Moreover, we have the following refinements. 
\begin{itemize}
\item[(1)] The index sequence $\mathfrak a$ is called {\rm almost optimal} if $\eta_{i+1} - \eta_i = \lambda + {\o}^{\mathfrak a}_i(1)$ where 
\[ 0 \leq {\o}^{\mathfrak a}_i(1) \leq \frac{1}{C_{\mathfrak a}\cdot 2^{|i|+1}} \,\,\,\,\mbox{for some constant $C_{\mathfrak a} >> \frac{1}{\lambda}$}\]
 for each $i \in \Z$. Then $\mathbb V(\mathfrak a)$ is called almost optimal. 
 \item[(2)] The index sequence $\mathfrak a$ is called {\rm normalized} if it is almost optimal and $\eta_0 \in [0, \lambda]$. Then $\mathbb V(\mathfrak a)$ is called normalized.  
 \end{itemize}
 \end{dfn}
 
 Here are some examples of gapped $\k$-modules and their gapped restrictions. 
 
 \begin{ex} Any $1$-gapped $\k$-module $\mathbb V$ over $\mathcal I = \R$ admits an almost optimal $1$-gapped restrictions with all perturbation constants ${\o}_i(1) =0$. Indeed, one simply take 
\[ \mathfrak a = \{\cdots 0, 1, 2, \cdots\}. \]
If moreover, one assigns the $i$-th element in $\mathfrak a$ by $\eta_i = i$, then $\mathbb V(\mathfrak a)$ is a normalized restriction, and it is a standard $\Z$-indexed persistence $\k$-module. Note that not every $\lambda$-gapped $\k$-module admits a normalized restriction, not even admitting an almost optimal restriction. For instance, if $\mathcal I = \{0, 2, 2^2, \cdots, 2^n, \cdots\}$, then a $1$-gapped $\k$-module $\mathbb V$ over $\mathcal I$ does not admit any $1$-gapped almost optimal restriction, essentially because $\mathcal I$ is too sparse. 
\end{ex}

\begin{ex} \label{ex-chfh-gap} For any Liouville fillable contact manifold $(M, \xi = \ker \alpha)$ and a contact Hamiltonian $h: [0,1] \times M \to \R$, via the contact Hamiltonian Floer homology, one can construct an ${\rm osc}(h)$-gapped $\k$-module over $\R \backslash \mathcal S_h$ as in Section \ref{sec-construction}, denoted by $\mathbb V_h$, where $\mathcal S_h$ is a discrete subset of $\R$ defined by (\ref{dfn-RFspec}). Moreover, it admits a normalized $\lambda$-gapped restriction. Indeed, since $\eta$ can be chosen away from a , one can pick an arbitrary 
\[ \eta_0 \in [0, {\rm osc}(h)) \cap (\R \backslash \mathcal S_h) \]
and set $\eta_i = \eta_0 + i \cdot {\rm osc}(h)$ with a small perturbation ${\o}_i(1)$ to avoid $\mathcal S_h$. Then this $\mathbb V_h(\mathfrak a)$ with $\mathfrak a = \{\cdots \eta_0, \eta_1, \cdots\}$ is the desired restriction. \end{ex}

\begin{ex} \label{ex-lambda0} When $\lambda =0$, the relation $s \leq_{\lambda = 0} t$ reduces to the standard relation $s \leq t$. Then any standard persistence $\k$-module over $\mathcal I$ is a $0$-gapped $\k$-module. For any index sequence $\mathfrak a \subset \mathcal I$, the corresponding $\mathbb V(\mathfrak a)$ is a $0$-gapped restriction. Moreover, suppose $\mathcal I = \R$, then as long as elements $\eta_i$ in $\mathfrak a$ are different but sufficiently close consecutively, $\mathbb V(\mathfrak a)$ is almost optimal with ${\o}_i(1) =0$ for all $i \in \Z$. For instance, one can take $\mathfrak a = \{\cdots, - \ep, 0, \ep, \cdots\}$ for small $\ep>0$. When $\ep \to 0$, the restriction $\mathbb V(\mathfrak a)$ approximates to the original persistence $\k$-module $\mathbb V$ over $\R$. \end{ex}

\noindent Here are some basic observations on gapped $\k$-modules. 

\medskip

\noindent (i) By the general theory on persistence modules as explained at the beginning of this section, for every $\lambda$-gapped $\k$-module, its restriction $\mathbb V(\mathfrak a)$ admits a barcode $B(\mathbb V(\mathfrak a))$, where the endpoints of bars come from $\eta_i$'s in $\mathfrak a$. 

\medskip

\noindent (ii) Any $\lambda$-gapped $\k$-module $\mathbb V$ is automatically a $\lambda'$-gapped $\k$-module if $\lambda' \leq \lambda$. However, for a fixed $\lambda$-gapped $\k$-module $\mathbb V$, it is usually difficult to compare two $\lambda$-gapped restrictions $\mathbb V(\mathfrak a)$ and $\mathbb V(\mathfrak b)$, especially when these index sequences $\mathfrak a$ and $\mathfrak b$ are very different to each other.

\medskip

\noindent (iii) For two almost optimal $\lambda$-gapped restrictions $\mathbb V(\mathfrak a)$ and $\mathbb V(\mathfrak b)$. They are comparable in the sense that by a shift at most up to $\lambda$, the index sequence $\mathfrak a$ and $\mathfrak b$ coincide (modulo the perturbation constants from ${\o}_i(1)$). However, note that the index-shifted sequence $\mathfrak a[n]$, given by defining the $i$-th element as the $\eta_{i+n} \in \mathfrak a$, obviously defines a $\lambda$-gapped $\k$-module 
\begin{equation} \label{shift}
\mathbb V(\mathfrak a[n]) \,\,\,\,\mbox{which can also be denoted by $\mathbb V(\mathfrak a)[n\lambda]$}, 
\end{equation}
since $B(\mathbb V(\mathfrak a[n]))$ is a shift of $B(\mathbb V(\mathfrak a))$ by $n\lambda$. This means that even though $\mathfrak a$ and $\mathfrak b$ are comparable index sequences, $\mathbb V(\mathfrak a)$ and $\mathbb V(\mathfrak b)$ may differ a lot in terms of the barcodes $B(\mathbb V(\mathfrak a))$ and $B(\mathbb V(\mathfrak b))$. 

\medskip

For normalized $\lambda$-gapped restrictions of a fixed $\lambda$-gapped $\k$-module, we have the following stability result.  
Recall that persistence $\k$-modules are quantitatively comparable via the interleaving distance $d_{\rm inter}$, defined as a certain shifted persistence isomorphism. It essentially takes advantage that the parameterization set $\mathcal I \subset \R$ of a persistence module can be shifted.

\begin{prop} \label{prop-change-function}
Fix a scalar $\lambda > 0$ and a $\lambda$-gapped $\k$-module. Then for any normalized $\lambda$-gapped restrictions $\mathbb V(\mathfrak a)$ and $\mathbb V(\mathfrak b)$, we have $d_{\rm inter}(\mathbb V(\mathfrak a), \mathbb V(\mathfrak b)) \leq 3\lambda$. \end{prop}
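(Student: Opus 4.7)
The plan is to build explicit interleaving maps using only the structure maps $\iota^{\mathbb V}_{s,t}$ of the ambient gapped module, since these automatically satisfy the functoriality required for a persistence morphism. Write $\mathfrak a = \{\eta_i\}_{i \in \Z}$ and $\mathfrak b = \{\tau_i\}_{i \in \Z}$. Normalization gives $\eta_0, \tau_0 \in [0,\lambda]$, and the geometric-decay bound on the perturbations $o^{\mathfrak a}_i(1), o^{\mathfrak b}_i(1)$ together with $C_{\mathfrak a}, C_{\mathfrak b} \gg 1/\lambda$ yields a uniform estimate $|\eta_i - (\eta_0 + i\lambda)| \leq \epsilon$ and the analogous statement for $\tau_i$, with some $\epsilon \ll \lambda$. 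By symmetry I may assume $\eta_0 \leq \tau_0$, so $\tau_0 - \eta_0 \in [0,\lambda]$.

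Viewing $\mathbb V(\mathfrak a)$ and $\mathbb V(\mathfrak b)$ as $\R$-indexed persistence modules via the standard left-continuous step-function extension, I would set $\phi_r := \iota^{\mathbb V}_{\eta_{i(r)},\, \tau_{j(r)}}$, where $\eta_{i(r)}$ is the largest element of $\mathfrak a$ with $\eta_{i(r)} \leq r$ and $\tau_{j(r)}$ is the largest element of $\mathfrak b$ with $\tau_{j(r)} \leq r + 3\lambda$; define $\psi_r$ by swapping the roles of $\mathfrak a$ and $\mathfrak b$. The main check is the gap condition in $\mathbb V$: for $\phi$ one uses $\tau_{j(r)} \geq \tau_{i(r)+2}$, because $\tau_{i(r)+2} \leq \eta_{i(r)} + 2\lambda + (\tau_0 - \eta_0) + 2\epsilon \leq \eta_{i(r)} + 3\lambda$, which then gives $\tau_{j(r)} - \eta_{i(r)} \geq 2\lambda - 2\epsilon > \lambda$; the analogous bound for $\psi$ uses $\eta_0 \geq \tau_0 - \lambda$.

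Commutativity of $\phi$ and $\psi$ with the persistence structure maps of the step extensions is immediate from $\iota^{\mathbb V}_{s,t} \circ \iota^{\mathbb V}_{r,s} = \iota^{\mathbb V}_{r,t}$. For the interleaving identity $\psi_{r+3\lambda} \circ \phi_r = \iota^{\mathbb V(\mathfrak a)}_{r,\, r+6\lambda}$, functoriality yields $\psi_{r+3\lambda} \circ \phi_r = \iota^{\mathbb V}_{\eta_{i(r)},\, \eta_{m(r)}}$, where $\eta_{m(r)}$ is the largest element of $\mathfrak a$ at most $\tau_{j(r)} + 3\lambda$; one then checks that $\eta_{m(r)}$ is also the largest element of $\mathfrak a$ at most $r + 6\lambda$ under the uniform growth rate, which is precisely what the step extension of $\mathbb V(\mathfrak a)$ evaluates to at $r + 6\lambda$. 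The identity $\phi_{r+3\lambda} \circ \psi_r = \iota^{\mathbb V(\mathfrak b)}_{r,\, r+6\lambda}$ is handled symmetrically.

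I expect the main technical obstacle to be the book-keeping around the offset $\tau_0 - \eta_0 \in [0,\lambda]$ together with the accumulated perturbations, which determine whether indices such as $\tau_{i(r)+2}$ or $\tau_{i(r)+3}$ play the extremal role in $j(r)$, and similarly for $m(r)$. The factor $3\lambda$ is precisely what makes every case robust: one $\lambda$ absorbs the offset between the two normalizations, one $\lambda$ supplies the gap required for a structure map of $\mathbb V$ to exist, and one $\lambda$ of slack guarantees that both $\phi$ and $\psi$ can be defined with the same shift and that their composition lands at the target prescribed by the step extension.
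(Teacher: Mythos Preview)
Your strategy is the same as the paper's: build the interleaving out of the ambient structure maps $\iota^{\mathbb V}_{s,t}$ and verify the gap condition using the normalization estimates. The paper carries this out more economically by staying with the discrete index sequences and using asymmetric index shifts ($\eta_\ell \mapsto \tau_{\ell+2}$ one way, $\tau_\ell \mapsto \eta_{\ell+3}$ the other), rather than passing through an $\R$-indexed step extension.

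There is, however, a genuine error in your composition check. By your own definition of $\psi$ via the step extension, the target of $\psi_{r+3\lambda}$ is the largest $\eta_m \leq (r+3\lambda)+3\lambda = r+6\lambda$, \emph{not} the largest $\eta_m \leq \tau_{j(r)}+3\lambda$. These two indices need not agree: take $\eta_i = i\lambda$, $\tau_i = i\lambda + \epsilon'$ with $0<\epsilon'\ll\lambda$, and $r=0$. Then $j(r)=2$ (since $\tau_3 = 3\lambda+\epsilon'>3\lambda$), so $\tau_{j(r)}+3\lambda = 5\lambda+\epsilon'$ and the largest $\eta_m$ below it is $\eta_5$; but the largest $\eta_m \leq r+6\lambda = 6\lambda$ is $\eta_6$. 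So the equality you propose to ``check'' is false in general.

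The fix is simply to drop the spurious intermediate description: with the correct target $\eta_{i(r+6\lambda)}$, functoriality of $\iota^{\mathbb V}$ gives $\psi_{r+3\lambda}\circ\phi_r = \iota^{\mathbb V}_{\eta_{i(r)},\,\eta_{i(r+6\lambda)}}$ immediately, and this is exactly $\iota^{\mathbb V(\mathfrak a)}_{r,\,r+6\lambda}$ in the step extension. No separate index-matching check is needed. Once this is corrected your argument goes through, though the paper's discrete version avoids the step-extension bookkeeping altogether.
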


\begin{proof} Set $\mathfrak a = \{\cdots, \eta_0, \eta_1, \cdots\}$ and $\mathfrak b = \{\cdots, \tau_0, \tau_1, \cdots\}$. By definition, without loss of generality, assume that $0 \leq \eta_0 \leq \tau_0 < \lambda$. Without of loss generality, let us consider $\ell \in \Z_{\geq 0}$. According to Definition \ref{dfn-restriction}, we have
\begin{align*}
\eta_{\ell} & = \eta_0 + \ell \lambda + \sum_{0 \leq i \leq \ell} {\o}^{\mathfrak a}_i(1) \\
& \leq \tau_0 + (\ell+1) \lambda \\
& \leq \left(\tau_0 + (\ell+2) \lambda +  \sum_{0 \leq i \leq \ell+2} {\o}^{\mathfrak b}_i(1)\right) - \lambda  = \tau_{\ell+2} - \lambda.
\end{align*}
Therefore, by Definition \ref{dfn-gap-mod}, there exists a morphism $\iota_{\eta_\ell \,\tau_{\ell+2}}: V_{\eta_\ell} \to V_{\tau_{\ell+2}}$. Similarly, we have 
\begin{align*}
\tau_{\ell} &= \tau_0 + \ell \lambda +  \sum_{0 \leq i \leq \ell} {\o}^{\mathfrak b}_i(1)\\
& \leq (\eta_0 + \lambda) + (\ell+1) \lambda \\
& \leq \left(\eta_0 + (\ell+3)\lambda + \sum_{0 \leq i \leq \ell+3} {\o}^{\mathfrak a}_i(1)\right) - \lambda =  \eta_{\ell + 3} - \lambda. 
\end{align*}
Therefore, by Definition \ref{dfn-gap-mod}, there exists a morphism $\iota_{\tau_{\ell} \,\eta_{\ell+3}}: V_{\tau_\ell} \to V_{\eta_{\ell+3}}$. In terms of the notations, $\mathbb V(\mathfrak a)_{\eta_\ell} = V_{\eta_\ell}$ and $(\mathbb V(\mathfrak a)[n\lambda])_{\eta_\ell} = V_{\eta_{\ell+n}}$, similarly to $\mathbb V(\mathfrak b)$. Then we have the following commutative diagram,
\[\xymatrixcolsep{5pc}\ \xymatrix{ \mathbb V(\mathfrak a)_{\eta_{\ell}} \ar[r]^-{\iota_{\eta_{\ell} \, \tau_{\ell+2}}} \ar@/_2.5pc/[rr]^-{\iota_{\eta_{\ell} \,\eta_{\ell+5}}} & \left(\mathbb V(\mathfrak b)[2\lambda]\right)_{\tau_\ell} \ar[r]^-{\iota_{\tau_{\ell} \, \eta_{\ell+3}}[2\lambda]} & \left(\mathbb V(\mathfrak a)[5\lambda]\right)_{\eta_{\ell}}}\]
and symmetric diagram. Then by definition we have $\mathbb V(\mathfrak a)$ and $\mathbb V(\mathfrak b)$ are $3\lambda$-interleaved, which implies the desired conclusion. 
\end{proof}

\begin{remark} \label{rmk-eta-valued-inter} (1) We need the $3\lambda$-interleaving relation (instead of a $2\lambda$-interleaving relation) precisely to taking care of the adjustment constant ${\o}(1)$. (2) When $\lambda =0$, up to the limit process as in Example \ref{ex-lambda0} if possible (for instance, the parametrization set $\mathcal I = \R$), there will be only one normalized sequence in the limit which implies that $d_{\rm inter}(\mathbb V(\mathfrak a), \mathbb V(\mathfrak b)) = 2 \lambda$.  \end{remark}


Here is a direct corollary of Proposition \ref{prop-change-function} and (2) of Remark \ref{rmk-eta-valued-inter}. Recall that a quantitative comparison between barcodes is called the bottleneck distance denoted by $d_{\rm bottle}$, which transfers the theoretically incomputable distance $d_{\rm inter}$ into a combinatorial type distance, in particular, easier to compute. 

\begin{cor} \label{cor-long-bar} Fix a scalar $\lambda \geq 0$, a $\lambda$-gapped $\k$-module, and a $\lambda$-gapped module $\mathbb V$. Then any two $\lambda$-gapped restrictions $\mathbb V(\mathfrak a)$ and $\mathbb V(\mathfrak b)$ satisfy 
\[ d_{\rm bottle}(B(\mathbb V(\mathfrak a)), B(\mathbb V(\mathfrak b))) \leq 3\lambda. \]
In particular, the cardinalities of infinite length bars in the barcodes $B(\mathbb V(\mathfrak a))$ and $B(\mathbb V(\mathfrak b))$ are the same, and there is a one-to-one correspondence between them with left endpoints shifted up to $3\lambda$.  \end{cor}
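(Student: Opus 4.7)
The plan is to derive this corollary as a direct consequence of Proposition \ref{prop-change-function} together with the isometry theorem of Chazal--de Silva--Glisse--Oudot \cite{CdeSGO16} (or more elementarily, Bauer--Lesnick) which identifies the interleaving distance of pointwise finite-dimensional persistence modules with the bottleneck distance of their barcodes. Since Proposition \ref{prop-change-function} already yields $d_{\rm inter}(\mathbb V(\mathfrak a), \mathbb V(\mathfrak b)) \leq 3\lambda$ for any two normalized $\lambda$-gapped restrictions (and for arbitrary almost optimal restrictions via a trivial $\lambda$-shift argument putting the two into normalized form), the bound on $d_{\rm bottle}$ will follow immediately once both sides are interpreted in a common framework of $\R$-indexed persistence modules.

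First, I would pass from the discretely indexed modules $\mathbb V(\mathfrak a)$ and $\mathbb V(\mathfrak b)$ to their left-continuous extensions on $\R$, assigning to $t \in [\eta_i, \eta_{i+1})$ the vector space $V_{\eta_i}$ together with the obvious structure maps. Both the interleaving relation built in the proof of Proposition \ref{prop-change-function} (diagrams involving $\iota_{\eta_\ell\,\tau_{\ell+2}}$ and $\iota_{\tau_\ell\,\eta_{\ell+3}}$) and the resulting barcode decomposition (guaranteed by Crawley-Boevey's theorem since each $V_{\eta_i}$ is finite dimensional) extend verbatim to the $\R$-indexed modules. The isometry theorem then upgrades the $3\lambda$-interleaving into a partial matching of $B(\mathbb V(\mathfrak a))$ and $B(\mathbb V(\mathfrak b))$ of width at most $3\lambda$; the fact that the original endpoints lie in $\mathfrak a$ or $\mathfrak b$ is preserved, so no information is lost by the passage to $\R$-indexing.

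Finally, for the statement about infinite-length bars, I would use the standard combinatorial feature of bottleneck matchings. A $\delta$-matching between two barcodes forces every unmatched interval to have length at most $2\delta$: intervals of length $>2\delta$ must be paired with an interval of the other barcode whose left and right endpoints lie within $\delta$ of the original's. Applied with $\delta = 3\lambda$, this immediately forbids any interval of infinite length from being unmatched, whence the cardinalities of infinite-length bars coincide and the left endpoints of matched infinite bars differ by at most $3\lambda$, as claimed.

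I do not anticipate any genuine obstacle here, as the heavy lifting is done in Proposition \ref{prop-change-function}; the only potentially delicate point is the passage from the discrete index set $\mathfrak a$ (or $\mathfrak b$) to the $\R$-indexed setting in which the isometry theorem is traditionally stated, but this is routine given that each $V_{\eta_i}$ is finite dimensional and the gapped structure maps already provide all the required comparison morphisms between consecutive slices.
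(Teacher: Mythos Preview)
Your proposal is correct and follows essentially the same route as the paper: invoke Proposition~\ref{prop-change-function} to get the $3\lambda$-interleaving, apply the isometry theorem (the paper cites Bauer--Lesnick \cite{BL15}), and read off the statement about infinite bars from the definition of $d_{\rm bottle}$. One small caveat: your parenthetical about handling ``arbitrary almost optimal restrictions via a trivial $\lambda$-shift'' is not quite right, since shifting the index sequence shifts the barcode and hence changes $d_{\rm bottle}$; the corollary (like the proposition it rests on) should really be read for normalized restrictions, which is also what the paper tacitly assumes.
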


\begin{proof} This directly comes from the standard isometry theorem in persistence module theory, say, the main result of \cite{BL15}, and the definition of $d_{\rm bottle}$. \end{proof}

Next, we consider two gapped modules with possibly two different gaps. The potential problem is that two gapped modules may be parametrized by different sets, so it is not obvious how to compare them via their gapped restrictions. The following definition provides a possible approach. 

\begin{dfn} \label{dfn-gap-inter} Fix scalars $\lambda, \lambda' \in \R_{> 0}$. Suppose $\mathbb V$ is a $\lambda$-gapped $\k$-module indexed by $\mathcal I$ and $\mathbb W$ is a $\lambda'$-gapped $\k$-module indexed by $\mathcal J$, where $\mathcal I, \mathcal J$ are subsets of $\R$. These gapped $\k$-modules $\mathbb V$ and $\mathbb W$ are called {\rm $\delta$-interleaved} for a scalar $\delta \in \R_{\geq 0}$ either $\delta=0$ (only for $\mathbb V = \mathbb W$) or $\delta \geq \max\{\lambda, \lambda'\}$ if for any almost optimal $\delta$-gapped restrictions $\mathbb V(\mathfrak a)$ and $\mathbb W(\mathfrak a)$ where $\mathfrak a = \{\cdots, \eta_0, \eta_1, \cdots\} \subset \mathcal I \cap \mathcal J$, we have morphisms 
\[ \phi = \left\{\phi_i:  \mathbb V(\mathfrak a)_{\eta_{i}} \to \left(\mathbb W(\mathfrak a)[\delta]\right)_{\eta_i} =\mathbb W(\mathfrak a)_{\eta_{i+1}}\right\}_{i \in \Z} \]
and 
\[ \psi = \left\{\psi_i:  \mathbb W(\mathfrak a)_{\eta_{i}} \to \left(\mathbb V(\mathfrak a)[\delta]\right)_{\eta_i} =\mathbb V(\mathfrak a)_{\eta_{i+1}}\right\}_{i \in \Z} \]
such that the following diagrams commute:
\[\xymatrixcolsep{5pc}\ \xymatrix{ \mathbb V(\mathfrak a)_{\eta_{i}} \ar[r]^-{\phi_i} \ar@/_2.5pc/[rr]^-{\iota^{\mathbb V}_{\eta_{i} \,\eta_{i+2}}} & \left(\mathbb W(\mathfrak a)[\delta]\right)_{\eta_i} \ar[r]^-{\psi_i[\delta]} & \left(\mathbb V(\mathfrak a)[2\delta]\right)_{\eta_{i}}}\]
and 
\[\xymatrixcolsep{5pc}\ \xymatrix{ \mathbb W(\mathfrak a)_{\eta_{i}} \ar[r]^-{\psi_i} \ar@/_2.5pc/[rr]^-{\iota^{\mathbb W}_{\eta_{i} \,\eta_{i+2}}} & \left(\mathbb V(\mathfrak a)[\delta]\right)_{\eta_i} \ar[r]^-{\phi_i[\delta]} & \left(\mathbb W(\mathfrak a)[2\delta]\right)_{\eta_{i}}}\]
for any $i \in \Z$. 
\end{dfn}

\begin{remark} The existence of a $\delta$-interleaving relation for some finite $\delta \geq 0$ between $\mathbb V$ and $\mathbb W$ automatically implies there exists an almost optimal sequence $\mathfrak a \subset \mathcal I \cap \mathcal J$. In general it is possible that $\mathcal I \cap \mathcal J = \emptyset$, but $\mathbb V$ and $\mathbb W$ are close to each other in terms of the interleaving relation. However, we will not encounter this situation in this paper. \end{remark} 

Note that including $\delta=0$ is to make sure that any $\lambda$-gapped $\k$-module $\mathbb V$ and itself are $0$-interleaved, where $\phi = \psi = \mathds{1}$ (instead of being $\lambda$-interleaved which is not a sharp estimation). Of course one can assume $\mathfrak a$ is normalized (so are $\mathbb V(\mathfrak a)$ and $\mathbb W(\mathfrak b)$) since a shift of $\mathfrak a$ does not affect the diagrams in Definition \ref{dfn-gap-inter}.

\subsection{Contact spectral invariant} Given a persistence $\k$-module $\mathbb V$ (parametrized by $\R$ or a discrete subset $\mathcal I \subset \R$) and an element $a \in (\mathbb V)_{\infty}$, in the modern language of barcode, the spectral
invariant of $a$ can be read off directly from the information of certain bars. Before giving the explicit procedure, recall that infinite length bars in $B(\mathbb V)$ correspond to a (not necessarily unique) basis of the $\k$-module $(\mathbb V)_{\infty}$, denoted by $e = \{e_1, ... , e_n\}$ where $n = \dim_{\k} (\mathbb V)_{\infty}$. Then under this basis $e$, the element $a$ admits the following linear combination 
\[ a = x_1 e_1 + \cdots x_n e_n, \,\,\,\,\mbox{where $x_i \in \k$}. \]
Introduce the notation $\mathfrak s(e_i)$ as the left endpoint of the infinite length bar corresponding to the basis element $e_i$ (called a filtration spectrum as in Definition 5.2 in \cite{UZ16}). Then the spectral invariant of $a$, denoted by $c(a)$, can be defined or computed (see Proposition 6.6 in \cite{UZ16}) by 
\begin{equation} \label{def-si}
c(a, \mathbb V) = \max\{\mathfrak s(e_i) \, | \, x_i \neq 0\}. 
\end{equation}
A general result, say Theorem 7.1 in \cite{UZ16}, proves that the multi-set $\{\mathfrak s(e_i)\}_{i}$ is unique for a fixed $\mathbb V$ even though basis $e$ is not uniquely determined in general.

\medskip

Now, in the case of a $\lambda$-gapped $\k$-module $\mathbb V$, we will define the spectral invariant of $a$ with the help of $\lambda$-gapped restrictions. 

\begin{dfn} \label{dfn-si-gap} Fix a scalar $\lambda \in \R_{>0}$. Suppose $\mathbb V$ is a $\lambda$-gapped $\k$-module indexed by $\mathcal I$ where $+\infty$ is an accumulated point of $\mathcal I$. For any $a \in \varinjlim_{i \in I} (\mathbb V)_{i}$, define its spectral invariant by 
\[ c(a, \mathbb V): = \inf \left\{c(a, \mathbb V(\mathfrak a)) \,\mbox{as defined in (\ref{def-si})} \,\bigg| \, \begin{array}{l} \mbox{$\mathfrak a$ is a $\lambda'$-normalized sequence as in} \\ \mbox{(2) of Definition \ref{dfn-restriction} for any $\lambda' \geq \lambda$} \end{array}\right\}. \]
As a convention, define $c(0, \mathbb V) = +\infty$. 
\end{dfn}

Note that Corollary \ref{cor-long-bar} implies that when $a \neq 0$, the spectral invariant $c(a, \mathbb V)$ defined in Definition \ref{dfn-si-gap} is a finite number. Also, observe that when $\lambda = 0$ and $\mathcal I = \R$, the definition $c(a, \mathbb V)$ in Definition \ref{dfn-si-gap} coincides with the computational definition in (\ref{def-si}). Finally, it is readily verified that when $\mathcal I$ is a dense subset of $\R$, $c(a, \mathbb V)$ can be equivalently defined by using $\lambda$-normalized restrictions $\mathbb V(\mathfrak a)$. 

\begin{remark} \label{rmk-generalized-csi} For a $\lambda$-gapped $\k$-module $\mathbb V$, here is an alternative way to compute or define $c(a, \mathbb V)$ via almost optimal sequences. For any $\lambda$-almost optimal restriction $\mathbb V(\mathfrak a)$ for an index sequence $\mathfrak a: \Z \to \mathcal I$ (which is not necessarily $\lambda$-normalized), there exists a unique integer $m^{\mathfrak a} \in \Z$ such that $\mathfrak a - m^{\mathfrak a} \lambda: \Z \to \mathcal I$ defined by $(\mathfrak a - m^{\mathfrak a} \lambda)(i) = \mathfrak a(i) - m^{\mathfrak a} \lambda$ is $\lambda$-normalized (up to the adjustment of the constants ${\o}^{\mathfrak a}_i(1)$ in (1) of Definition \ref{dfn-restriction}). For instance, one can take $m^{\mathfrak a} = \floor{\frac{\mathfrak a(0)}{\lambda}}$. Then define the following quantity, 
\[ \overline{c}(a, \mathbb V): = \inf \left\{c(a, \mathbb V(\mathfrak a)) - m^{\mathfrak a}\lambda'\,\bigg| \, \begin{array}{l} \mbox{$\mathfrak a$ is a $\lambda'$-almost optimal sequence as in} \\ \mbox{(1) of Definition \ref{dfn-restriction} for any $\lambda' \geq \lambda$} \end{array}\right\}. \]
Clearly, if $\mathfrak a$ is already $\lambda$-normalized, then $m^{\mathfrak a} = 0$, and thus $\overline{c}(a, \mathbb V) \leq c(a, \mathbb V)$. Moreover, up to $\ep \geq 0$, suppose $\overline{c}(a, \mathbb V)$ is obtained via a $\lambda'$-almost optimal sequence $\mathfrak a: \Z \to \mathcal I$, then $\mathfrak a - m^{\mathfrak a} \lambda'$ by construction is $\lambda'$-normalized, where $\mathbb V(\mathfrak a - m^{\mathfrak a}\lambda')$ serves as a candidate in the definition of $c(a, \mathbb V)$ in Definition \ref{dfn-si-gap}. Therefore, we have 
\[ c(a, \mathbb V) \leq c(a, \mathbb V(\mathfrak a - m^{\mathfrak a}\lambda')) = c(a, \mathbb V(\mathfrak a)) - m^{\mathfrak a} \lambda' = \overline{c}(a, \mathbb V), \]
where the first equality comes from the standard translation property of the spectral invariant of class $a$ from persistence module $\mathbb V(\mathfrak a)$. In this way, we confirm that $c(a, \mathbb V) = \overline{c}(a, \mathbb V)$.\end{remark}

The most important property of $c(a,\mathbb V)$ in this paper is the following stability result. 

\begin{prop} \label{prop-alg-stability} Fix scalars $\lambda, \lambda' \in \R_{\geq 0}$ and $\delta_0 \geq \max\{\lambda, \lambda'\}$. Suppose $\mathbb V$ is a $\lambda$-gapped $\k$-module indexed by $\mathcal I$ and $\mathbb W$ is a $\lambda'$-gapped $\k$-module indexed by $\mathcal J$, where $\mathcal I, \mathcal J$ are dense subsets of $\R$. If $\mathbb V$ and $\mathbb W$ are $\delta$-interleaved as in Definition \ref{dfn-gap-inter} for any $\delta \geq \delta_0$, then for any $a \in \varinjlim_{i \in I} (\mathbb V)_{i} \cap \varinjlim_{j \in J} (\mathbb W)_{j}$, the contact spectral invariants of $a$ as in Definition \ref{dfn-si-gap} satisfy 
\[ |c(a, \mathbb V) - c(a, \mathbb W)| \leq C \delta_0 \]
for some constant $C \geq 0$. More explicitly, when $\mathbb V \neq \mathbb W$, we have $C = 2$; when $\mathbb V = \mathbb W$, we have the obvious $C = 0$. 
\end{prop}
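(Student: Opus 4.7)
The plan is to chain two estimates: a coarsening inequality relating $c(a,\mathbb V(\mathfrak a))$ to $c(a,\mathbb V)$ for a suitable common $\delta$-normalized sequence $\mathfrak a$, and the standard persistence-module interleaving estimate relating $c(a,\mathbb V(\mathfrak a))$ and $c(a,\mathbb W(\mathfrak a))$ inherited from Definition \ref{dfn-gap-inter}. The case $\mathbb V=\mathbb W$ is trivial with $C=0$ since the spectral invariants coincide identically, so the substantive case is $\mathbb V\ne\mathbb W$ and $C=2$; it suffices to show $|c(a,\mathbb V)-c(a,\mathbb W)|\leq 2\delta$ for every $\delta>\delta_0$ and then let $\delta\searrow\delta_0$.

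Fix $\delta>\delta_0$ and $\ep>0$. Using that $\mathcal I$ and $\mathcal J$ are dense in $\R$ (together with the density of $\mathcal I\cap\mathcal J$ implicit in the interleaving hypothesis, which demands almost-optimal common sequences for every $\delta\geq\delta_0$), choose a $\lambda'$-normalized sequence $\mathfrak b=\{\tau_j\}_{j\in\Z}\subset\mathcal I\cap\mathcal J$ with $\lambda\leq\lambda'\ll\delta$ satisfying
\[ c(a,\mathbb V(\mathfrak b))\leq c(a,\mathbb V)+\ep. \]
By selecting every $k$-th element with $k\lambda'\approx\delta$, extract a subsequence $\mathfrak a=\{\eta_i\}_{i\in\Z}\subset\mathfrak b$ that is $\delta$-normalized (within the small adjustments $o^{\mathfrak a}_i(1)$ permitted by Definition \ref{dfn-restriction}).

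To compare $c(a,\mathbb V(\mathfrak a))$ with $c(a,\mathbb V)$, write $\tau_{j_0}=c(a,\mathbb V(\mathfrak b))$ and let $\eta_i$ be the smallest element of $\mathfrak a$ with $\eta_i\geq\tau_{j_0}$; then $\eta_i\leq\tau_{j_0}+\delta$ by the $\delta$-gap of $\mathfrak a$. Since $\mathfrak a\subset\mathfrak b$ and consecutive elements of $\mathfrak b$ are at least $\lambda'\geq\lambda$ apart, one has $\tau_{j_0}\leq_\lambda\eta_i$, so the morphism $V_{\tau_{j_0}}\to V_{\eta_i}$ in the $\lambda$-gapped structure exists and carries a representative of $a$ at $\tau_{j_0}$ to one at $\eta_i$; hence
\[ c(a,\mathbb V(\mathfrak a))\leq\eta_i\leq c(a,\mathbb V)+\delta+\ep. \]
On the other hand, the interleaving morphisms $\phi_i,\psi_i$ of Definition \ref{dfn-gap-inter} together with the two commutative triangles assemble to a standard $\delta$-interleaving of the $\Z$-indexed persistence modules $\mathbb V(\mathfrak a)$ and $\mathbb W(\mathfrak a)$, and the isometry theorem (cf.~\cite{BL15}) yields $|c(a,\mathbb V(\mathfrak a))-c(a,\mathbb W(\mathfrak a))|\leq\delta$. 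Chaining with the definitional bound $c(a,\mathbb W)\leq c(a,\mathbb W(\mathfrak a))$,
\[ c(a,\mathbb W)\leq c(a,\mathbb W(\mathfrak a))\leq c(a,\mathbb V(\mathfrak a))+\delta\leq c(a,\mathbb V)+2\delta+\ep. \]
The symmetric inequality follows by swapping the roles of $\mathbb V$ and $\mathbb W$; sending $\ep\to 0$ and $\delta\searrow\delta_0$ completes the proof.

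The main obstacle I expect is the coarsening inequality. The crucial trick is choosing $\mathfrak a$ as a subsequence of a finer $\mathfrak b$ (rather than as an arbitrary $\delta$-normalized common sequence), since this alone prevents the $\lambda$-gap appearing in the partial order $\leq_\lambda$ from costing an additional term $\lambda$ and thereby inflating the constant $C$ to $3$ or worse. Ensuring that such a $\mathfrak b\subset\mathcal I\cap\mathcal J$ can be chosen simultaneously fine and nearly optimal for $c(a,\mathbb V)$ relies on the density of $\mathcal I\cap\mathcal J$ in $\R$, which we must establish from the interleaving setup.
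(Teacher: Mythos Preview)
Your outline follows the same skeleton as the paper's proof: take a near-optimal fine restriction of $\mathbb V$, pass to a $\delta$-gapped common sequence, apply the interleaving from Definition~\ref{dfn-gap-inter}, and close with the definitional bound $c(a,\mathbb W)\leq c(a,\mathbb W(\mathfrak a))$. The visible difference is in the coarsening step. You bound the coarsening loss by $\delta$ (using the image characterization~(\ref{dfn-si-classical}) together with the observation that $\tau_{j_0}\leq_\lambda\eta_i$ because both lie in the fine $\lambda$-gapped sequence $\mathfrak b$), and then let $\delta\searrow\delta_0$; this gives total loss $2\delta\to 2\delta_0$. The paper instead arranges the coarse subsequence to \emph{contain} the exact spectral value $\mathfrak s(e_{i_\ast})$ and proves via a short barcode argument that this forces $c(a,\mathbb V(\text{coarse}))=c(a,\mathbb V(\text{fine}))$ with no loss at all; the price is that $\delta$ must be an integer multiple of $\lambda$, whence only $\delta\in[\delta_0,2\delta_0]$ is available, and the single interleaving loss $\delta\leq 2\delta_0$ gives the bound. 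Both routes land on $2\delta_0$; yours trades the barcode equality for a limit.

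On the density obstacle you flag: the paper does not try to establish that $\mathcal I\cap\mathcal J$ is dense. It picks the fine sequence in $\mathcal I$ only, coarsens, and then perturbs the already sparse coarse sequence slightly so that it also lies in $\mathcal J$. Your demand that the \emph{fine} sequence $\mathfrak b$ already lie in $\mathcal I\cap\mathcal J$ is what makes the density question seem unavoidable, and since your sharp coarsening bound genuinely relies on $\mathfrak a\subset\mathfrak b$, you cannot simply perturb $\mathfrak a$ afterward without risking another $\lambda$. In the intended application this is moot because $\mathcal I\cap\mathcal J=\R\setminus(\mathcal S_h\cup\mathcal S_g)$ is dense, but in the abstract statement as written both arguments lean on this point---the paper implicitly through its perturbation step, you explicitly.
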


\begin{proof} According to the definition of $c(a, \mathbb V)$ in Definition \ref{dfn-si-gap}, for any $\ep>0$, there exists a $\lambda$-normalized sequence $\mathfrak a \subset \mathcal I$ and the associated $\lambda$-gapped restriction $\mathbb V(\mathfrak a)$ such that $c(a, \mathbb V(\mathfrak a))  \leq c(a, \mathbb V) + \ep$. By the definition in (\ref{def-si}), suppose $c(a, \mathbb V(\mathfrak a)) = \mathfrak s(e_{i_*})$, as the left endpoint of an interval $I \in B(\mathbb V(\mathfrak a))$ corresponding to the basis element $e_{i_*}$. In particular, $\mathfrak s(e_{i_*}) \in \mathfrak a$. Now, remove certain terms in $\mathfrak a$ but keep $\mathfrak s(e_{i_*})$ so that the resulting new sequence denoted by $\mathfrak b$ is $\delta$-gapped for some $\delta \in [\delta_0, 2 \delta_0]$. Note that this can obtained since $\lambda \leq \delta_0$ and the requested $\delta$ can be chosen, for instance, as
\[ \delta = k \lambda \,\,\,\,\mbox{where $k = \lfloor\frac{\delta_0}{\lambda} \rfloor +1$}. \]
Moreover, since both $\mathcal I$ and $\mathcal J$ are dense in $\R$, up to small perturbations of the items in $\mathfrak b$, we can assume that $\mathfrak b \subset \mathcal J$ as well. In this way, we obtain almost optimal $\delta$-gapped restrictions $\mathbb V(\mathfrak b)$ and $\mathbb W(\mathfrak b)$. By our assumption, persistence modules $\mathbb V(\mathfrak b)$ and $\mathbb W(\mathfrak b)$ are $\delta$-interleaved. 

Next, we claim that $c(a, \mathbb V(\mathfrak b)) = c(a, \mathbb V(\mathfrak a))$ for the $\delta$-gapped sequence $\mathfrak b$. Indeed, removing terms from $\mathfrak a$ as above can only result in an increasing of the spectral invariant of $a$. Suppose $c(a, \mathbb V(\mathfrak b)) > c(a, \mathbb V(\mathfrak a))$, then $c(a, \mathbb V(\mathfrak b)) = \mathfrak s(e_{j_*})$ for some $j_* \neq i_*$, since $\mathfrak s(e_{i_*})$ is not removed from $\mathfrak a$ by our construction and the infinite length bars in $B(\mathbb V(\mathfrak a))$ and those in $B(\mathbb V(\mathfrak b))$ are one-to-one corresponded. In particular, the bar $I = [\mathfrak s(e_{i_*}), \infty) \in B(\mathbb V(\mathfrak a))$ corresponds to $I' = [\mathfrak s(e_{i_*}), \infty) \in B(\mathbb V(\mathfrak a))$. By definition in (\ref{def-si}), the bar $J' = [\mathfrak s(e_{j_*}), \infty) \in B(\mathbb V(\mathfrak a))$ corresponds to some $J \in B(\mathbb V(\mathfrak a))$ where its left endpoint is no greater than $\mathfrak s(e_{i_*})$. In particular, the value $\mathfrak s(e_{i_*})$ lies in the interval $J$. This provides a contradiction since again $\mathfrak s(e_{i_*}) \in \mathfrak a$ remains and it is impossible to obtain $J'$ from $J$ by removing terms from $\mathfrak a$. 

Finally, since $\mathbb V(\mathfrak b)$ and $\mathbb W(\mathfrak b)$ are $\delta$-interleaved, we have $|c(a, \mathbb V(\mathfrak b)) - c(a, \mathbb W(\mathfrak b))| \leq \delta$. 
\begin{align*}
c(a, \mathbb W(\mathfrak b)) & \leq c(a, \mathbb V(\mathfrak b)) + \delta \\
& = c(a, \mathbb V(\mathfrak a)) +\delta \leq c(a, \mathbb V) + \ep + \delta.
\end{align*}
This implies that $c(a, \mathbb W) \leq  c(a, \mathbb V) + \ep + 2 \delta_0$. Thus we obtain the desired conclusion by switching $\mathbb V$ and $\mathbb W$ and let $\ep \to 0$. 
\end{proof}

\section{Pair-of-pants product} \label{sec-pp}

In this section, we prove a maximum principle that enables us to define the pair-of-pants product ${\rm HF}_\ast(h^{a_1})\otimes {\rm HF}_\ast(h^{a_2})\to {\rm HF}_\ast(h^b)$ for contact Hamiltonian Floer homology. This product is associated with a triple of admissible contact Hamiltonians $h^{a_1}, h^{a_2},$ and $h^b$ such that the following holds:
\begin{enumerate}
    \item there exists $\delta>0$ such that $h^{a_j}_t=0$ for $t\in  (-\delta, \delta)$ and $j\in\{1,2\}.$
    \item there exists $\delta>0$ such that $h^{b}_t=0$ for $t\in  (-\delta, \delta) \cup (\frac{1}{2}-\delta, \frac{1}{2}+\delta)$.
    \item $h^{a_1} \hat{\#} h^{a_2} \leqslant h^b$ where $h^{a_1} \hat{\#} h^{a_2} := \left\{  \begin{matrix} h^{a_1}_{2t}, & t \in \left[0, \frac{1}{2}\right] \\  h^{a_2}_{2t-1}, & t \in \left[\frac{1}{2}, 1 \right]       \end{matrix}   \right..$
\end{enumerate}
The pair-of-pants product is defined by counting solutions of the Floer equation parametrized by a sphere with three points removed (i.e., by a pair-of-pants).  The precise definition of the pair-of-pants product is given in Section~\ref{sec:defPPP}. In Sections \ref{sec:pants} and \ref{sec:productdata}, we introduce the necessary objects for the definition of the pair-of-pants product, namely the pair-of-pants with a slit and product data. The Section~\ref{sec:compactnessPPP} proves the maximum principle (as a Theorem by its own) that implies the compactness of the moduli spaces and thus justifies the definition.  

\subsection{Pair-of-pants with a slit}\label{sec:pants}
Now we introduce the notation that will be used in the definition of the product. This model was considered first in \cite[Section 3.2]{AS-gt}. Let $(P,j)$ be the Riemannian surface obtained by removing three points, $a_1$, $a_2$, $b$, from the sphere. Let $S\subset P$ be the subset of $P$ consisting of two intersecting curves as in Figure~\ref{fig_pp} such that the complement of $S$ is biholomorphic to $\R\times(0,1)\sqcup \R\times(0,1)$. 
\begin{figure}[h]
\includegraphics[scale=0.9]{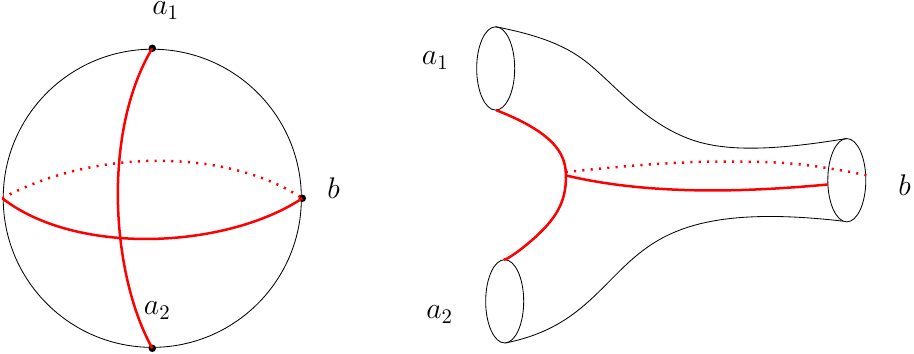}
\centering
\caption{Pair-of-pants with a slit.} \label{fig_pp}
\end{figure}
Let $\psi_1, \psi_2: \R\times(0,1)\to P\setminus S$ be restrictions of a biholomorphism $\R\times(0,1)\sqcup \R\times(0,1)\to P\setminus S$ to the connected components of $\R\times(0,1)\sqcup \R\times(0,1)$. Assume, by modifying the biholomorphism if necessary,
\begin{align*}
& \lim_{s\to -\infty} \psi_1(s,t)=a_1, \quad  \lim_{s\to +\infty} \psi_1(s,t)=b,\\
& \lim_{s\to -\infty} \psi_2(s,t)=a_2, \quad  \lim_{s\to +\infty} \psi_2(s,t)=b.
\end{align*} 
The limits above are limits in the sphere. Let $\delta>0$ be a sufficiently small positive number (say, smaller than $\frac{1}{2}$) and let $\Omega=\Omega_\delta\subset P$ be the open neighbourhood of $S$ given by
\[\Omega:= \left(P\setminus \psi_1( \R\times [\delta, 1-\delta] )\right)\setminus \psi_2( \R\times [\delta, 1-\delta] ).\]
Let 
\begin{align*}
&\iota_{a_1}, \iota_{a_2}: (-\infty, 0]\times\mathbb{S}^1\to P,\\
& \iota_b: [0,+\infty)\times\mathbb{S}^1\to P
\end{align*}
be biholomorphic embeddings such that 
\begin{align*}
& \iota_{a_1}(s,t)= \psi_1(s-s_a, t)\quad \text{for } s \in (-\infty, 0] \text{ and } t \in (0, 1),\\
& \iota_{a_2}(s,t)= \psi_2(s-s_a, t)\quad \text{for } s \in (-\infty, 0] \text{ and } t \in (0, 1),\\
& \iota_{b}\left(\frac{s}{2},\frac{t}{2}\right)= \psi_1\left(s+s_b, t\right)\quad \text{for } s \in[0,+\infty) \text{ and } t \in (0, 1),\\
& \iota_{b}\left(\frac{s}{2},\frac{t+1}{2}\right)= \psi_2\left(s+s_b, t\right)\quad \text{for } s \in[0,+\infty) \text{ and } t \in (0, 1),
\end{align*}
for positive $s_a, s_b\in\R^+$. Let $\beta$ be a $1-$form on $P$ such that $\psi^{*}_1 \beta = dt$ on $\mathbb{R} \times [\delta, 1-\delta]$ and $\psi^{*}_2 \beta = dt$ on $\mathbb{R} \times [\delta, 1-\delta]$.

\subsection{Product data}\label{sec:productdata}
Now, we define \emph{product data}. Let $\mathcal{D}^{a_1}:=(H^{a_1}, J^{a_1})$, $\mathcal{D}^{a_2}:=(H^{a_2}, J^{a_2})$, and $\mathcal{D}^b:=(H^{b}, J^{b})$ be regular Floer data such that the Hamiltonians $H^{a_1}_t$ and $H^{a_2}_t$ are constant for $t\in [-\delta, \delta] $, and such that $H^b_t$ is constant for $t\in \left[-\frac{\delta}{2}, \frac{\delta}{2}\right]\cup\left[\frac{1-\delta}{2}, \frac{1+\delta}{2}\right]$. The product data for the triple $(\mathcal{D}^{a_1}, \mathcal{D}^{a_2};\mathcal{D}^{b})$ consists of a (smooth) $P$-depended Hamiltonian
\[H:\hat{W}\times P\to \R : (x,p)\mapsto H_p(x)\]
and of a (smooth) $P$-family $J_p$ of $d\lambda$-compatible almost complex structures on $\hat{W}$ such that the following conditions hold.
\begin{enumerate}
\item (Conditions on the ends of $P$).
\begin{align*}
& H(x, \iota_{a_1}(s,t))=H^{a_1}_t(x)\quad\text{for } s\leqslant -1,\\
& H(x, \iota_{a_2}(s,t))=H^{a_2}_t(x)\quad\text{for } s\leqslant -1,\\
& H(x, \iota_{b}(s,t))=H^{b}_t(x)\quad\text{for } s\geqslant 1,\\
&J_p=\left\{ \begin{matrix} J^{a_1}_t& \text{if } p\in\iota_{a_1}((-\infty,-1]\times\mathbb{S}^1), \\ J^{a_2}_t & \text{if } \iota_{a_2}((-\infty,-1]\times\mathbb{S}^1),\\ J^b_t & \text{if } \iota_{b}([1,+\infty)\times\mathbb{S}^1).\end{matrix}\right.
\end{align*}
\item (The Hamiltonian on the conical end of $\hat{W}$). There exist $T\in\R^+$ and a smooth function $h: \partial W\to\R$ such that 
\[H_p(y,r)= r\cdot h(y,p) \]
 for $r\geqslant T$, $y\in\partial W$, and $p\in P$.
\item (The almost complex structure on the conical end of $\hat{W}$).  There exists $T\in\R^+$ such that $J_p$ is of twisted SFT-type on $\partial W\times[T,+\infty)$ for all $p\in P$.
\item (Conditions around the slit). For all $p\in\Omega$, the contact Hamiltonian $h_p:= h(\cdot, p)$ is equal to 0.
\item (Monotonicity). The functions 
\[s\mapsto h(y, \psi_1(s,t)),\quad s\mapsto h(y, \psi_2(s,t))\]
are increasing for all $t\in (0,1)$ and $y\in \partial W$.
\end{enumerate}

The definition of product data uses the notion of almost complex structure of twisted SFT-type. We now recall this notion. Let $(M, \xi)$ be a contact manifold with a contact form $\alpha$. Let $q:M\to \R^+$ be a smooth positive function. Denote by $Q:M\times \R^+\to \R^+$ the corresponding homogeneous Hamiltonian on the symplectization, i.e. $Q(y, r)= r\cdot q(y)$. Let $N_Q$ be the following distribution in $TM$
\[ N_Q(y,r):= \left\{\left(v, -\frac{r dq(y) v}{q(y)}\right)\:|\: v\in\xi_{y} \right\}. \]
Note that $dQ$ vanishes on $N_Q$. For $v\in \xi_y$, denote 
\[\zeta_Q(v)^r:= \left(v, -\frac{r dq(y) v}{q(y)}\right)\in N_Q(y,r). \]
An almost complex structure $J$ on $M\times\R^+$ if said to be of twisted SFT-type (with the twist $Q$) is there exists a $d\alpha$-compatible complex structure $j$ on $\xi$ such that the following holds
\begin{itemize}
\item $ J X_Q= r\partial_r, $
\item $ J^r\zeta_Q(v)= \zeta^r_Q(jv). $
\end{itemize}
If $J$ is of twisted SFT-type with the twist $Q$, then $dQ\circ J=-\lambda$.

\subsection{Definition of the pair-of-pants product}\label{sec:defPPP}

Let $h^{a_1}, h^{a_2},$ and $h^{b}$ be admissible contact Hamiltonians as at the beginning of Section \ref{sec-pp}. Let $\mathcal{D}^{a_1}:=(H^{a_1}, J^{a_1})$, $\mathcal{D}^{a_2}:=(H^{a_2}, J^{a_2})$, and $\mathcal{D}^b:=(H^{b}, J^{b})$ be regular Floer data such that the Hamiltonians $H^{a_1}, H^{a_2}, H^b$ have slopes equal to $h^{a_1}, h^{a_2}, h^b,$ respectively, and such that they satisfy the following condition: the Hamiltonians $H^{a_1}_t$ and $H^{a_2}_t$ are constant for $t\in [-\delta, \delta] $, and the Hamiltonian $H^b_t$ is constant for $t\in \left[-\frac{\delta}{2}, \frac{\delta}{2}\right]\cup\left[\frac{1-\delta}{2}, \frac{1+\delta}{2}\right]$ for sufficiently small $\delta>0$. Let $(H,J)$ be product data for the triple $(\mathcal{D}^{a_1}, \mathcal{D}^{a_2};\mathcal{D}^{b})$.
The pair-of-pants product
\[\Pi: {\rm HF}_\ast(h^{a_1})\otimes {\rm HF}_\ast(h^{a_2})\to {\rm HF}_\ast(h^b)\]
is defined, on the chain level, on generators, by
\[\Pi(\gamma_1\otimes \gamma_2):= \sum_{\gamma_3} n(\gamma_1, \gamma_2;\gamma_3) \left<\gamma_3\right>,\]
where $n(\gamma_1,\gamma_2;\gamma_3)$ is the number modulo 2 of the isolated solutions $u:P\to\hat{W}$ of the problem
\begin{align*}
    &(du-X_H\otimes\beta)^{0,1}=0,\\
    & \lim_{s\to-\infty} u\circ\iota_{a_1}(s,t)=\gamma_1(t),\\
    & \lim_{s\to-\infty} u\circ\iota_{a_2}(s,t)=\gamma_2(t),\\
    & \lim_{s\to+\infty} u\circ\iota_{b}(s,t)=\gamma_3(t).
\end{align*}

\subsection{Compactness of the moduli space}\label{sec:compactnessPPP}

Now, we prove that the elements of the moduli space cannot ``escape'' to the conical end of $\hat{W}$. Namely, we prove the following result, where we list it as a theorem due to the potential interest by its own. 

\begin{theorem}\label{thm:noescape}
    Let $P, j, \beta$ be as in Section~\ref{sec:pants}. Let $E\in\R^+$ and let $(H,J)$ be product data on the completion $\hat{W}$ of a Liouville domain. There exists a compact $K = K_{H,J,E}\subset \hat{W}$ such that $u(P)\subset K$ for every solution $u: P\to\hat{W}$ of the Floer equation
    \[\left(du - X_H(u)\otimes \beta\right)^{0,1}=0\]
    that satisfies $\mathbb{E}(u)=\frac{1}{2}\int_P  \abs{du - X_{H_p}\otimes\beta}^2  d{\rm vol}_P< E.$
\end{theorem}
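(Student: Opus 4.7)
The plan is to establish subharmonicity of $\rho := r \circ u$ on the preimage $U := u^{-1}(\partial W \times [T_0, \infty))$, where $r$ is the radial coordinate on the conical end of $\widehat{W}$ and $T_0 > T$ is chosen large enough that the product data has the prescribed radial form on $\{r \geq T_0\}$ and that admissibility of $h^{a_1}, h^{a_2}, h^b$ excludes $1$-periodic orbits of $X_{H^{a_j}}$ or $X_{H^b}$ beyond $r = T_0$. A $C^0$-bound $\rho \leq C(H, J, E)$ on $U$ then supplies the required compact set $K$.

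The slit geometry of $P$ makes subharmonicity on the interior of $U$ tractable by splitting $P$ into two regions with complementary favourable features. Outside the slit neighbourhood $\Omega$, i.e.\ inside $\psi_j(\R \times [\delta, 1-\delta])$ for $j \in \{1,2\}$, the form $\beta$ equals $dt$ and is therefore closed, while by condition~(5) of the product data the slope $s \mapsto h(y, \psi_j(s,t))$ is non-decreasing; this puts us in the Viterbo continuation setting, and the standard radial maximum-principle computation (compare \cite{MU}) yields $\Delta \rho \geq 0$. Inside $\Omega$, condition~(4) forces $h_p \equiv 0$, so $H_p \equiv 0$ on the conical end and the Floer equation reduces to $\bar\partial_{J_p} u = 0$; combined with the twisted SFT-type condition $dQ_p \circ J_p = -\lambda$ (for a suitable twist), the classical pseudoholomorphic-curve computation again gives $\Delta \rho \geq 0$. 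Since $h_p$ vanishes throughout $\Omega$ and only grows as we move into $\psi_j(\R \times [\delta, 1-\delta])$, the two regimes should match smoothly across the transition.

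Next I would handle the boundary and asymptotic control of $U$. The in-$P$ boundary of $U$ either lies on the level set $\{\rho = T_0\}$, where $\rho$ is trivially bounded, or is absorbed into one of the cylindrical ends at the punctures $a_1, a_2, b$. On each such end, $\beta = dt$ and the Hamiltonian depends only on $t$ with admissible slope, so $u$ becomes a standard Floer cylinder; the energy bound $\mathbb{E}(u) < E$ together with the classical radial maximum principle confines $\rho$ there to a value depending only on the slopes $h^{a_j}, h^b$ and $E$. Combining interior subharmonicity of $\rho$ with these boundary and asymptotic bounds via the weak maximum principle produces the desired uniform estimate on $\rho$, and hence the compact set $K = K_{H,J,E}$.

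The hard part will be making the subharmonicity computation rigorous across the entirety of $U$, and in particular verifying that the transition between the two regimes above introduces no uncontrollable error. In a neighbourhood of the slit one has both $d\beta \neq 0$ and $\partial_p h_p \neq 0$, a combination that is absent from most existing maximum-principle arguments in symplectic homology, where at least one of the two quantities is assumed to vanish. To close the estimate I would use the explicit local form of $\beta$ recorded in Section~\ref{sec:pants} together with conditions~(4) and~(5): the vanishing of $h_p$ throughout $\Omega$ ensures that the problematic $h_p\,d\beta$-terms are small near the slit, while monotonicity of $h_p$ along the $\psi_j$-direction forces the remaining $\partial_s h_p$-contribution to have the sign favourable for the maximum principle. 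Once these cross terms are controlled, matching with the cylindrical ends at the punctures is standard and the theorem follows.
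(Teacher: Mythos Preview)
Your proposal has a genuine gap: the claim that $\rho$ (or any radial function) is \emph{purely subharmonic} on $U$ does not hold, and the error is not confined to the transition region you flag. The almost complex structure $J_p$ is only required to be of \emph{twisted} SFT-type, with a twist $Q_p = r\cdot q_p$ that depends on $p\in P$. Consequently the correct test function is $\mu(p)=\log Q(u(p),p)$ rather than $r\circ u$, and even with this choice the identity $dQ_p\circ J_p=-\lambda$ produces, after the computation, an inequality of the form
\[
-dd^C\mu + \theta\wedge d\mu + d\theta \;\geq\; \frac{(d_PH)(u)\wedge\beta + H(u)\,d\beta}{Q(u)},
\]
where $\theta$ contains the term $(d_PQ\circ j)(u)/Q(u)$ coming from the $p$-variation of the twist. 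This first-order term is present \emph{everywhere} on $U$, including deep inside $\Omega$ and deep inside $\psi_j(\R\times[\delta,1-\delta])$; it has no sign and cannot be absorbed by conditions~(4) or~(5). Your last paragraph correctly identifies the $h_p\,d\beta$ and $\partial_s h_p$ contributions (these are exactly the right-hand side above, and conditions~(4)--(5) do make it nonnegative), but it overlooks the $d_PQ$ contribution entirely.

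The paper's resolution is not to seek a sign for $\theta$ but to bound $\|\theta\|_{L^2}$ and $\|d\theta\|_{L^2}$ in terms of the energy $E$, and then invoke the \emph{Aleksandrov} weak maximum principle (Proposition~\ref{prop:aleksandrov}) in place of the classical one. This principle tolerates an $L^2$-bounded drift term but requires the domain to sit inside a fixed compact subset of $P$; the paper secures this via a separate lemma showing that each connected component of $u^{-1}(\partial W\times[T,\infty))$ either lies in a fixed compact $K\subset P$ or in a cylinder $\iota_c(I\times\mathbb S^1)$ of length $\leq E/\varepsilon+1$. Your sketch of ``boundary and asymptotic control'' via the cylindrical ends gestures at this, but the bounded-diameter step is essential input to Aleksandrov and is logically prior to the maximum-principle argument, not a consequence of it.
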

To this end, we use the Aleksandrov maximum principle (see \cite[Theorem 9.1]{GT01}, \cite[Appendix A]{AS09}, and  \cite{MU}). The following proposition is a coordinate-free reformulation of the Aleksandrov maximum principle. 

\begin{prop}\label{prop:aleksandrov}
    Let $(\Sigma, j)$ be an open planar Riemannian surface (i.e. $(\Sigma, j)$ is a planar Riemannian surface that is not the sphere) with a volume form $dvol_\Sigma$. Let $K\subset \Sigma$ be a compact subset and let $A\in\R^+$. Then, there exists a constant $C_{K,A}\in\R^+$ such that for every connected open subset $\Omega\subset \Sigma$ with $\Omega\subset K$ and every 1-form $\eta$ on $\Omega$ with $\norm{\eta}_{L^2}\leqslant A$, the following holds. If $\mu, f:\Omega\to \R$ are smooth functions such that
    \begin{equation}\label{eq:ineqddc} -dd^c\mu + \eta\wedge d\mu \geqslant  f d{\rm vol}_\Sigma,\end{equation}
    then
    \[ \sup_{\Omega}\mu \leqslant \sup_{\partial \Omega} \mu^+ + C_{K,A}\cdot \int_\Omega f^2 d{\rm vol}_\Sigma.\]
    Here, $\mu^+:=\max(\mu, 0)$.
\end{prop}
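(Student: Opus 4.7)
The plan is to reduce Proposition \ref{prop:aleksandrov} to the classical two-dimensional Aleksandrov--Bakelman--Pucci (ABP) maximum principle in a Euclidean domain, exploiting the planarity of $(\Sigma,j)$ to set up a single global coordinate frame.

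First, I invoke the uniformization of planar Riemann surfaces: since $\Sigma$ is open, planar, and by hypothesis not the sphere, there exists a point of $S^2$ missing from any planar embedding of $\Sigma$, and stereographic projection from such a point gives a holomorphic embedding $\Phi\colon\Sigma\hookrightarrow\C$. Set $K':=\Phi(K)$, which is compact in $\C$. All subsequent analysis will be carried out in $\C$ via $\Phi$.

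Second, I rewrite the coordinate-free inequality in Euclidean coordinates $(x,y)$ on $\C$. One has $-dd^c\mu=\Delta\mu\,dx\wedge dy$, while the pushforward of $d\mathrm{vol}_\Sigma$ equals $\rho\,dx\wedge dy$ for a smooth positive function $\rho$ whose supremum and infimum over a neighborhood of $K'$ are bounded above and away from zero by constants depending only on $K$. Writing $\Phi_*\eta=b_1\,dx+b_2\,dy$, the hypothesis $-dd^c\mu+\eta\wedge d\mu\geq f\,d\mathrm{vol}_\Sigma$ becomes the pointwise linear elliptic inequality
\[
\Delta\mu+b_1\,\partial_y\mu-b_2\,\partial_x\mu\geq f\rho \qquad\text{on }\Phi(\Omega),
\]
an inequality of the form $L\mu\geq f\rho$ with drift vector $\mathbf{b}=(-b_2,b_1)$ satisfying $\|\mathbf{b}\|_{L^2(\Phi(\Omega))}\leq C_K\,\|\eta\|_{L^2(\Omega)}\leq C_K A$.

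Third, I apply the classical planar ABP estimate (\cite[Theorem 9.1]{GT01} specialized to $n=2$), which yields on the bounded domain $\Phi(\Omega)\subset K'$ a bound of the form
\[
\sup_{\Phi(\Omega)}\mu \leq \sup_{\partial\Phi(\Omega)}\mu^+ + C\bigl(\mathrm{diam}(K'),\,\|\mathbf{b}\|_{L^2}\bigr)\cdot\|(f\rho)^-\|_{L^2(\Phi(\Omega))}.
\]
The essential feature is that in dimension two the ABP constant depends on the drift only through its $L^2$-norm, which is exactly the hypothesis on $\eta$; hence the prefactor can be absorbed into a constant $C'_{K,A}$ depending only on $K$ and on $A$. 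Pulling back through $\Phi$ and using the two-sided bounds on $\rho$ over $K'$, I obtain $\sup_\Omega\mu\leq\sup_{\partial\Omega}\mu^+ + C'_{K,A}\,\|f\|_{L^2(\Omega,d\mathrm{vol}_\Sigma)}$. The stated form, with $\int_\Omega f^2\,d\mathrm{vol}_\Sigma$ on the right, then follows by the elementary Young-type bound $t\leq\tfrac12(\varepsilon^{-1}+\varepsilon t^2)$ applied to $t=C'_{K,A}\|f\|_{L^2}$, after separately disposing of the trivial case $\sup_\Omega\mu\leq\sup_{\partial\Omega}\mu^+$, with the additive constant absorbed by enlarging $C_{K,A}$.

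The main obstacle is not analytic but book-keeping: the quantitative dependence of the ABP constant on $\|b\|_{L^n}$ in dimension $n$ is already contained in \cite{GT01}, so the heart of the matter is to verify that the conformal transfer from the coordinate-free setting $(\Sigma,j,d\mathrm{vol}_\Sigma)$ into $\C$ can be arranged so that all constants — the diameter of $K'$, the bounds on $\rho$, and the $L^2$-bound on $\mathbf{b}$ — depend only on the compact set $K$ and the scalar $A$, uniformly in the subdomain $\Omega\subset K$ and in the $1$-form $\eta$.
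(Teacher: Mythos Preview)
Your reduction to the classical Aleksandrov--Bakelman--Pucci estimate via a global holomorphic chart is exactly the argument the paper has in mind: the paper does not prove the proposition at all but simply presents it as ``a coordinate-free reformulation of the Aleksandrov maximum principle'' with references to \cite[Theorem~9.1]{GT01}, \cite[Appendix~A]{AS09}, and \cite{MU}. Your first three steps (embed $\Sigma$ holomorphically into $\C$, rewrite the inequality as $\Delta\mu + \mathbf{b}\cdot\nabla\mu \geq f\rho$ with $\|\mathbf{b}\|_{L^2}$ controlled by $\|\eta\|_{L^2}\leq A$, and apply the $n=2$ case of ABP) correctly yield
\[
\sup_{\Omega}\mu \leq \sup_{\partial\Omega}\mu^+ + C'_{K,A}\,\|f\|_{L^2(\Omega,d\mathrm{vol}_\Sigma)},
\]
and this is the form actually used later in the proof of Theorem~\ref{thm:noescape}, where one only needs uniform control of $\|\theta\|_{L^2}$ and $\|d\theta\|_{L^2}$.

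There is, however, a genuine gap in your last step. The Young-type inequality $t\leq \tfrac12(\varepsilon^{-1}+\varepsilon t^2)$ produces an additive constant $\tfrac{1}{2\varepsilon}$ that cannot be absorbed into a coefficient multiplying $\int_\Omega f^2$. In fact the statement with $\int_\Omega f^2$ in place of $\|f\|_{L^2}$ is false as written: on a fixed disc take $\eta=0$ and $\mu$ solving $\Delta\mu=-c$ with zero boundary values; then $\sup\mu$ scales like $c$ while $\int f^2$ scales like $c^2$, so no constant $C_{K,A}$ works uniformly as $c\to 0$. This is a typo in the paper's statement rather than an error in your method --- the bound you obtain with $\|f\|_{L^2}$ on the right is the correct one, matches the cited references, and is what the subsequent argument requires. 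You should simply stop after the $\|f\|_{L^2}$ estimate and note the discrepancy.
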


By definition, for every product data $(H, J)$ there exists $T=T_{H,J}\in\R^+$ such that $H(y,r,p)=r\cdot h(y,p)$ for $(y,r)\in (\partial W)\times[T, +\infty)$ and for some ($P$-parametrized) contact Hamiltonian $h:(\partial W)\times P\to\R$, and such that the restriction of $J_p$ to $(\partial W)\times[T, +\infty)$ is of twisted SFT-type. We denote by $Q^J:(\partial W)\times\R^+\times P\to \R^+$ the function such that $Q^J(\cdot,\cdot, p)$ is the twist of $J_p$. The next proposition shows that the function $p\mapsto \log(Q^J(u(p),p))$ satisfies the inequality of the type \eqref{eq:ineqddc}.

\begin{prop}\label{prop:ineq}
Let $(H,J)$ be product data on $\hat{W}$. Let $T=T_{H,J}$ and $Q=Q^J$. Let $u:P\to \hat{W}$ be a solution of the Floer equation
$0= \left( du - X_H(u)\otimes\beta\right)^{0,1}.$
Let $\Omega:=u^{-1}(\partial W\times(T, +\infty)).$ Then, the function $\mu:\Omega\to \R : p\mapsto \log Q(u(p), p)$ satisfies the inequality
\[ -dd^C\mu + \theta\wedge d\mu + d\theta\geqslant 0, \]
where
\[\theta:=\frac{(d_PQ\circ j)(u) + \{Q, H\}(u)\beta\circ j}{Q\circ u}.\]
\end{prop}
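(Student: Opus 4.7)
The plan is to unpack $d^c\mu$ via the Floer equation and the twisted SFT-type condition, arrive at a clean intermediate identity of the form $d^cF = \sigma - F\theta$, and then differentiate it. First I would use the Floer equation, rewritten as $du(jv) = J\,du(v) - JX_H\,\beta(v) + X_H\,(\beta\circ j)(v)$, to compute $d^cF$ pointwise for $F := Q\circ u$ by expanding $-dF(jv) = -d_{\hat W}Q(du(jv)) - d_PQ(jv)$. On the conical end, the twisted SFT-type condition $dQ\circ J = -\lambda$ forces $d_{\hat W}Q(J\,du(v)) = -u^*\lambda(v)$ and $d_{\hat W}Q(JX_H) = -\lambda(X_H)$; combined with the Hamiltonian identity $d_{\hat W}Q(X_H) = \{Q,H\}$ these collapse the expansion into the clean formula
\[
d^cF \;=\; \sigma - F\theta, \qquad \sigma \;:=\; u^*\lambda - \lambda(X_H)\beta,
\]
which is the key intermediate step and explains the exact shape of $\theta$.

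Next I would exploit the chain rule identities $d^c\mu = F^{-1}d^cF$ and $dd^c(\log F) = F^{-1}dd^cF - F^{-2}\,dF\wedge d^cF$. Differentiating the identity above yields $dd^cF = d\sigma - dF\wedge\theta - F\,d\theta$, so after substitution and using the antisymmetry $\theta\wedge dF = -dF\wedge\theta$, the expression $-dd^c\mu + \theta\wedge d\mu + d\theta$ reduces to a combination of $-d\sigma/F$, a multiple of $d\theta$, and the manifestly non-negative two-form $F^{-2}\,dF\wedge d^cF$ (which is the $j$-squared length of $d\log F$ times the volume form on $P$).

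The main obstacle is to squeeze positivity out of the remaining $-d\sigma/F$ and $d\theta$ pieces. On the conical end the homogeneity $H_p(y,r)=r\,h(y,p)$ gives $\lambda(X_H) = -H$, so $\sigma = u^*\lambda + H\,\beta$ and $d\sigma = u^*\omega + d(H\circ u)\wedge\beta + (H\circ u)\,d\beta$. A standard Floer-theoretic rewriting shows that $u^*\omega + \beta\wedge d_{\hat W}H(du)$ equals a non-negative multiple of $d\mathrm{vol}_P$ (the energy density $|du - X_H\otimes\beta|^2$). The leftover pieces that do not manifestly vanish are the $P$-derivative contribution $d_PH\wedge\beta$ and the term $(H\circ u)\,d\beta$; these should be absorbed into $d\theta$ via the homogeneity $H_p = r\cdot h_p$ together with the monotonicity hypothesis (item (5) of the product data) that $s\mapsto h(y,\psi_i(s,t))$ is increasing along the strip-like ends of $P$. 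Writing out this cancellation carefully—i.e.\ verifying that the structure imposed on the product data in Section \ref{sec:productdata} is precisely what makes the residual combination non-negative—is the technical heart of the proof.
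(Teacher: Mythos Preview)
Your intermediate identity and the chain-rule reduction are correct, and you are right that the leftover $P$-derivative pieces $(d_PH)\wedge\beta$ and $H\,d\beta$ are ultimately controlled by the monotonicity and slit conditions in the product data. But the proof as written has a genuine gap.

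After your reduction you have (with your sign conventions)
\[
-dd^c\mu + \theta\wedge d\mu + d\theta \;=\; F^{-2}\,dF\wedge d^cF \;-\; F^{-1}d\sigma \;+\; 2\,d\theta,
\]
and you then observe that $d\sigma$ contains the energy density $\tfrac12\|du-X_H\otimes\beta\|^2\,d\mathrm{vol}_P$. The point you have not addressed is that this energy density enters with a \emph{minus} sign via $-F^{-1}d\sigma$, and it can be arbitrarily large: it measures all of $du-X_H\otimes\beta$, not just its component in the $(X_Q,\partial_r)$-plane. The ``manifestly non-negative'' term $F^{-2}dF\wedge d^cF$ only sees the $Q$-derivative of $u$, so it cannot by itself compensate for the full negative energy term, and there is no mechanism in your outline for absorbing the difference into $d\theta$ or into the monotonicity terms. (There is also a sign tension: with the convention that makes $d^cF=\sigma-F\theta$ hold, one has $d^c=-d^{C}_{\text{paper}}$, so either your key formula or the claimed sign of $dF\wedge d^cF$ is off by a sign; but this is secondary to the main issue.)

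What is missing is precisely the paper's projection estimate. One projects $du-X_H\otimes\beta$ orthogonally onto $\mathrm{span}\{X_Q(u),\partial_r\}$ and computes these two components explicitly in terms of $d\rho$, $d^C\rho$ and $\theta$; this yields
\[
\tfrac12\|du-X_H\otimes\beta\|^2\,d\mathrm{vol}_P \;\geq\; \rho^{-1}\,d^C\rho\wedge d\rho \;+\; 2\,d\rho\wedge\theta.
\]
This lower bound is exactly what is needed to cancel the bad $-F^{-2}|dF|^2$ term \emph{and} the cross term $dF\wedge\theta$ simultaneously, leaving only $F^{-1}\bigl((d_PH)\wedge\beta + H\,d\beta\bigr)$, which is then non-negative by conditions~(4) and~(5) on the product data. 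Without this projection step the argument does not close.
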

\begin{proof}
Let $\rho:\Omega\to \R^+$ be the function given by $\rho(p):=Q_p(u(p))= e^{\mu(p)}$. We start the proof by computing $dd^C\rho=d(d\rho\circ j)$. The Floer equation, together with $d_WQ\circ J=-\lambda$, implies
\begin{align*}
d\rho = & d (Q\circ u)\\
	= & (d_P Q)\circ u + (d_W Q)(du)\\ 
	= & (d_P Q)\circ u + (d_W Q)(X_{H}(u)\beta - J(u)du\circ j + J(u)X_H(u)\beta\circ j)\\ 
	= &  (d_P Q)\circ u + \{X_Q, X_H\}(u)\beta + (d_W Q)J(u)( - du\circ j + X_H(u)\beta\circ j )\\
	= &  (d_P Q)\circ u + \{X_Q, X_H\}(u)\beta - \lambda ( - du\circ j + X_H(u)\beta\circ j )\\
	= &  (d_P Q)\circ u + \{X_Q, X_H\}(u)\beta + \lambda ( du\circ j ) - \lambda(X_H)(u)\beta\circ j.
\end{align*}
Therefore,
\begin{align*}
d\rho\circ j = & (d_P Q\circ j)\circ u + \{X_Q, X_H\}(u)\beta\circ j - \lambda ( du )+ \lambda(X_H)(u)\beta\\
		= &  (d_P Q\circ j)\circ u + \{X_Q, X_H\}(u)\beta\circ j - u^\ast \lambda+ \lambda(X_H)(u)\beta,
\end{align*}
and consequently,
\begin{align*}
dd^C \rho = & -u^\ast\omega + d (\lambda(X_H)(u))\wedge\beta +\lambda(X_H(u)) d\beta + d(\rho\theta)\\
		= & -u^\ast\omega - d (H(u))\wedge\beta -H(u) d\beta + d\rho\wedge\theta+ \rho d\theta.
\end{align*}
In the last equation, we also used $\lambda(X_H)=-H$. Since
\[ u^\ast \omega + d_WH(du)\wedge\beta = \frac{1}{2}\norm{du- X_H\otimes\beta}^2d{\rm vol}_P, \]
we have
\begin{align*}  
dd^C\rho =& -\frac{1}{2}\norm{du - X_H(u)\otimes\beta}^2d{\rm vol}_P + d\rho\wedge\theta + \rho d\theta - (d_P H)\circ u \wedge\beta  - H(u)d\beta.
\end{align*}
The energy density $\norm{du - X_H(u)\otimes\beta}^2dvol_P$ of the (vector-bundle-valued) 1-form $du - X_H(u)\otimes\beta$ can be estimated from below by the energy density of the projection of $du - X_H(u)\otimes\beta$ to $\{k\cdot X_Q(u) + \ell\cdot \partial_r\:|\:k, \ell\in\R\}$. Now, we compute the projection to $\{k\cdot X_Q(u)\:|\:k\in\R\}$:
\begin{align*}
\frac{\omega( du - X_H(u)\otimes\beta, J X_Q(u) )}{\norm{X_Q(u)}_J} &= \frac{\omega(J du - JX_H(u)\otimes\beta, - X_Q(u))}{\sqrt{\rho}}\\
							&= dQ(J du - JX_H(u)\otimes\beta)\cdot\rho^{-\frac{1}{2}}\\
							&= dQ( du\circ j - X_H\otimes\beta\circ j)\cdot\rho^{-\frac{1}{2}}\\
							&= \big(d(Q(u))\circ j - (d_PQ\circ j)(u)\\
							&\phantom{\&} - \{Q, H\}(u)\beta\circ j\big)\cdot\rho^{-\frac{1}{2}} = (d\rho\circ j - \rho\theta)\cdot\rho^{-\frac{1}{2}}.
\end{align*}
Similarly, the projection to $\{k\cdot \partial_r\:|\:k\in\R\}$ is equal to
\begin{align*}
    \frac{\omega( du - X_H(u)\otimes\beta, J \partial_r )}{\norm{\partial_r}_J} &=  \frac{\omega( du - X_H(u)\otimes\beta, J \rho\partial_r )}{\norm{\rho\partial_r}_J}\\
    &= \rho^{-\frac{1}{2}}\cdot \omega\left( du- X_H(u)\otimes\beta, -X_Q(u) \right)\\
    &= (d\rho + \rho\cdot \theta\circ j)\cdot \rho^{-\frac{1}{2}}.
\end{align*}
Using the considerations above and the orthogonality of $X_Q$ and $\partial_r$, we get
\begin{align*}
\norm{du - X_H(u)\otimes\beta}^2d{\rm vol}_P =& \norm{(d\rho\circ j - \rho\theta)\cdot\rho^{-\frac{1}{2}}}^2 + \norm{(d\rho + \rho\cdot \theta\circ j)\cdot \rho^{-\frac{1}{2}}}^2\\
            =& 2\cdot  \norm{(d\rho\circ j - \rho\theta)\cdot\rho^{-\frac{1}{2}}}^2 \\
            \geqslant & \frac{2}{\rho}\cdot (d\rho\circ j - \rho\theta)\circ j\wedge (d\rho\circ j - \rho\theta)\\
		=& \frac{2}{\rho}\cdot\left( -d\rho -\rho\theta\circ j \right)\wedge \left( d\rho\circ j -\rho\theta \right) \\
		=& \frac{2}{\rho}\cdot \big(-d\rho\wedge d\rho\circ j + \rho d\rho \wedge\theta\\
			&  - \rho (\theta\circ j)\wedge(d\rho\circ j) + \rho^2\cdot(\theta\circ j)\wedge\theta\big) \\
		=&\frac{2}{\rho}\cdot ( (d^C\rho)\wedge d\rho + 2 \rho d\rho\wedge\theta + \rho^2\cdot(\theta\circ j)\wedge\theta )\\
		\geqslant& \frac{2}{\rho}\cdot (d^C\rho)\wedge(d\rho) + 4 d\rho\wedge\theta.
\end{align*}
Therefore,
\begin{align*}
-dd^C\rho\geqslant& \frac{1}{\rho}\cdot d^C\rho\wedge d\rho + d\rho\wedge\theta - \rho d\theta + (d_PH)(u)\wedge\beta + H(u)d\beta.
\end{align*}
Since 
\[ d\mu=\frac{d\rho}{\rho},\quad dd^C\mu=\frac{dd^C\rho}{\rho} +\frac{d^C\rho\wedge d\rho}{\rho^2}, \]
we have
\[-dd^C\mu + \theta\wedge d\mu + d\theta \geqslant \frac{(d_PH)(u)\wedge\beta + H(u)d\beta}{Q(u)}. \]
This finishes the proof.
\end{proof}

The following lemma shows that there exists a real number $T>1$ such that the preimage of $\partial W\times [T, +\infty])$ under every $u$ as in Theorem~\ref{thm:noescape} has uniformly bounded diameter.
\begin{lemma}
 Let $(H,J)$ be product data on $\hat{W}$ and let $E\in\R^+$ be a positive real number. Then, there exist real numbers $T>1, L>0,$ and a compact subset $K\subset P$ such that the following holds. For every solution $u:P\to\hat{W}$ of the Floer equation $(du - X^H(u)\otimes\beta)^{0,1}=0$ with $\mathbb{E}(u)<E$ and every connected component $\Omega$ of $u^{-1}(\partial W\times [T,+\infty))$ at least  one of the following conditions holds:
 \begin{enumerate}
     \item $\Omega\subset K$,
     \item There exists an interval $I$ of length at most $L$ such that $\Omega\subset \iota_{c}(I\times\mathbb{S}^1)$ for some $c\in\{a_1, a_2, b\}.$
 \end{enumerate}
\end{lemma}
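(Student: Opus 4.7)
The plan is a standard no-escape argument carried out on each of the three cylindrical ends of $P$.

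First, I would choose $T>1$ so large that: (a) on $\partial W\times[T,\infty)$ the product data $(H,J)$ assumes its conical form from Items (2)--(3) of Section~\ref{sec:productdata}; and (b) for each $c\in\{a_1,a_2,b\}$, the Hamiltonian $H^c_t$ has no $1$-periodic orbit in $\partial W\times[T,\infty)$. Property (b) follows directly from the admissibility of $h^c$: a $1$-periodic orbit of the lift $H^c=r\cdot h^c$ inside the conical end is exactly a point $x\in\partial W$ with $\phi_{h^c}^1(x)=x$ and $\kappa(\phi_{h^c}^1)(x)=0$, which by admissibility does not exist for $r$ large enough. An application of Proposition~\ref{prop:ineq} together with the Aleksandrov maximum principle (Proposition~\ref{prop:aleksandrov}) to $\mu=\log Q(u,\cdot)$ on $\Omega=u^{-1}(\partial W\times[T,\infty))$, noting that the asymptotic orbits of $u$ at the three punctures sit in $\{r<T\}$ by~(b), yields a uniform upper bound $r(u)\leqslant T'$ on $\Omega$ depending only on $T$ and the energy bound $E$.

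Next, I would pick $N_0>0$ so large that on each cylindrical tail $\iota_c(\{|s|>N_0\}\times\mathbb{S}^1)$ the product data coincides with the pullback of $(H^c_t,J^c_t)$; such an $N_0$ exists by Item~(1) of Section~\ref{sec:productdata}. The core intermediate claim is: there exists $L>0$ such that, for every $u$ as in the statement and every connected component $\Omega'$ of $u^{-1}(\partial W\times[T,\infty))\cap\iota_c(\{|s|>N_0\}\times\mathbb{S}^1)$, the $s$-extent of $\iota_c^{-1}(\Omega')$ is at most $L$. I would prove this by contradiction and Floer compactness. Suppose instead that along a sequence $(u_n,\Omega_n')$ (without loss of generality on the $b$-tail) the $s$-extent tends to infinity; choose $p_n=\iota_b(s_n,t_n)\in\Omega_n'$ with $s_n-\min_{\Omega_n'}s\to\infty$ and $\max_{\Omega_n'}s-s_n\to\infty$, and form the reparametrizations $w_n(s,t):=u_n\circ\iota_b(s+s_n,t)$. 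These are Floer solutions for $H^b_t$ defined on arbitrarily long cylinders around $\{0\}\times\mathbb{S}^1$, with images contained in the compact set $\partial W\times[T,T']$ and with energies bounded by $E$. Since $(\hat W, d\lambda)$ is exact, no sphere bubbles can form, so standard elliptic regularity yields a $C^\infty_{\mathrm{loc}}$-convergent subsequence. The resulting limit $w_\infty:\R\times\mathbb{S}^1\to\partial W\times[T,T']$ is a finite-energy Floer cylinder for $H^b_t$, hence admits asymptotic $1$-periodic orbits of $H^b_t$ as $s\to\pm\infty$, both sitting in $\partial W\times[T,T']$. This contradicts property~(b) and completes the proof of the claim.

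Finally, I would set
\[ K := P\,\setminus\, \bigsqcup_{c\in\{a_1,a_2,b\}}\iota_c\bigl(\{|s|>N_0+L\}\times\mathbb{S}^1\bigr),\]
which is compact. Let $\Omega$ be a connected component of $u^{-1}(\partial W\times[T,\infty))$. If $\Omega$ is entirely contained in the open union of tails $\bigsqcup_c\iota_c(\{|s|>N_0\}\times\mathbb{S}^1)$, then connectedness forces $\Omega$ to lie in a single tail, and the intermediate claim forces it inside $\iota_c(I\times\mathbb{S}^1)$ for some interval $I$ of length $\leqslant L$, yielding conclusion~(2). Otherwise $\Omega$ meets the ``body'' $P\setminus\bigsqcup_c\iota_c(\{|s|>N_0\}\times\mathbb{S}^1)$; the body is contained in $K$, and each subcomponent of $\Omega$ that enters a tail is constrained by the claim to the buffer $\iota_c(\{N_0<|s|\leqslant N_0+L\}\times\mathbb{S}^1)\subset K$, whence $\Omega\subset K$, yielding conclusion~(1). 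The principal technical difficulty is the Floer-compactness step above: one must confirm that the $w_n$ converge on all of $\R\times\mathbb{S}^1$ without losing energy to bubbling, and that the limit possesses genuine asymptotic $1$-periodic orbits -- including excluding the degenerate possibility $w_\infty\equiv\mathrm{const}$, which would itself demand a stationary orbit of $H^b_t$ in $\partial W\times[T,T']$ and is therefore also ruled out by~(b).
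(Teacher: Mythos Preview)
Your argument is circular. You invoke Propositions~\ref{prop:aleksandrov} and~\ref{prop:ineq} at the outset to produce a uniform bound $r(u)\le T'$, but Proposition~\ref{prop:aleksandrov} requires the domain $\Omega$ to lie in a \emph{fixed} compact $K\subset P$, with the constant $C_{K,A}$ depending on $K$. For each individual $u$ the set $u^{-1}(\partial W\times[T,\infty))$ is indeed compact (the asymptotics lie below level $T$), but nothing prevents it from being arbitrarily large in $P$ as $u$ varies, so Aleksandrov yields only a $u$-dependent bound. Producing a uniform such $K$ is precisely the content of the lemma you are proving---indeed, in the paper this lemma is what feeds Proposition~\ref{prop:aleksandrov} in the proof of Theorem~\ref{thm:noescape}. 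Without the uniform $T'$, your compactness step collapses: the reparametrized cylinders $w_n$ have no a priori target-space bound, so Gromov--Floer compactness does not apply. (There is also a secondary gap: even granting $T'$, the limit $w_\infty$ need not have image in $\partial W\times[T,T']$; only the single point $w_\infty(0,t_\infty)$ is guaranteed to lie there, so a nontrivial $w_\infty$ with both asymptotics below level $T$ gives no contradiction.)

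The paper's proof sidesteps all of this with a direct energy estimate. By Lemma~\ref{lem:gammainnbhd}, for each end $c$ there exist $\varepsilon_c,B_c>0$ such that any loop $\gamma$ with $\int_0^1\norm{\gamma'-X_{H^c_t}(\gamma)}^2\,dt\le\varepsilon_c$ stays below level $B_c$. Taking $T=\max_c B_c$ and $\varepsilon=\min_c\varepsilon_c$, every circle $\{s\}\times\mathbb{S}^1$ whose image under $u\circ\iota_c$ meets $\partial W\times[T,\infty)$ must carry slice-energy $>\varepsilon$; hence if a component of $\Omega\cap\op{im}\iota_c$ had $s$-extent exceeding $L:=E/\varepsilon+1$, integrating over $s$ would force total energy $>E$. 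This argument never touches target-space compactness, and it gives explicit constants.
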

\begin{proof}
By Lemma~\ref{lem:gammainnbhd}, there exist positive real numbers $\varepsilon_c, B_c\in\R^+,$ $c\in\{a_1,a_2, b\}$ such that $\gamma(\mathbb{S}^1)\subset \hat{W}\setminus (\partial W\times(B_c, +\infty))$ for every loop $\gamma:\mathbb{S}^1\to \hat{W}$ that satisfies
\[\int_0^1\norm{ \gamma'(t)- X_{H^c_t}(\gamma(t)) }_{J_c}^2 dt\leqslant \varepsilon_c.\]
Denote
\[T := \max_c B_c,\quad \varepsilon := \min_c \varepsilon_c,\quad L:= \frac{E}{\varepsilon} + 1. \] Let $K\subset P$ be the complement of 
\[\iota_{a_1}\left((-\infty, -L)\times \mathbb{S}^1\right)\:\cup\: \iota_{a_2}\left((-\infty, -L)\times \mathbb{S}^1\right)\:\cup\: \iota_{b}\left((L,+\infty)\times \mathbb{S}^1\right). \]
Now, we check that $T, L,$ and $K$ satisfy the conditions of the lemma. First, we show that for every connected component $\Omega'$ of $\Omega\cap \op{im} \iota_c$, where $c\in\{a_1, a_2, b\}$, there exists an interval $I$ of length at most $L$ such that $\Omega'\subset \iota_c\left( I\times\mathbb{S}^1 \right)$. Assume the contrary. Then, there exists an interval $[s_0, s_0+ L]$ such that $\iota_c \left( \{s\}\times\mathbb{S}^1 \right) \cap \Omega'\not=\emptyset$ for all $s\in[s_0, s_0+L]$. This implies
\[\int_0^1\abs{ \partial_t(u\circ\iota_c)(s,t)- X_{H^c_t}(u\circ\iota_c(s,t)) }_{J_c}^2 dt> \varepsilon_c\]
for all $s\in[s_0, s_0+L]$. Hence, if $c=b$,
\begin{align*}
    E & \geqslant \frac{1}{2} \int_P \abs{du - X_{H_p}\otimes\beta}^2 d{\rm vol}_P\\
    &\geqslant \frac{1}{2} \int_{\iota_c([s_0, s_0+L]\times\mathbb{S}^1)} \abs{du - X_{H_p}\otimes\beta}^2 d{\rm vol}_P\\
    & = \int_{s_0}^{s_0+L}\int_0^1 \abs{ \partial_t (u\circ\iota_c(s,t)) - X_{H^c_t}(u\circ\iota_c(s,t))}^2 dtds\\
    & > \int_{s_0+1}^{s_0+L}\int_0^1 \abs{ \partial_t (u\circ\iota_c(s,t)) - X_{H^c_t}(u\circ\iota_c(s,t))}^2 dtds\\
    &> \int_{s_0+1}^{s_0+L} \varepsilon_c ds\\
    &= (L-1)\cdot \varepsilon_c\geqslant E,
\end{align*}
which is a contradiction. If $c\in\{a_1, a_2\}$, one similarly obtains a contradiction. If $\Omega\subset \bigcup_c \im{\iota_c}$, then the argument above implies that there exist $c\in\{a_1, a_2, b\}$ and an interval $I$ of length at most $L$ such that $\Omega\subset \iota_c(I\times\mathbb{S}^1)$. Otherwise, $\Omega\subset K$. Indeed, if $\Omega\not\subset K$ and if $\Omega\not \subset \bigcup_c\op{im}\iota_c$, then for some $c\in\{a_1, a_2, b\}$ and for some connected component $\Omega'$ of $\Omega\cap \op{im}\iota_c$ the sets $\iota_c \left( \{0\}\times\mathbb{S}^1 \right)\cap \Omega'$ and $\iota_c \left( \{(\op{sgn} c) \cdot L \}\times\mathbb{S}^1 \right)\cap \Omega'$ are non-empty (where $\op{sgn} c = -1$ if $c \in \{a_1, a_2\}$ and $\op{sgn} c = 1$ if $c = b$). This is a contradiction by previous considerations. 
\end{proof}

Now, we formulate the statement that was used in the proof of the last lemma and give a reference for the proof.
\begin{lemma}\label{lem:gammainnbhd}
    Let $(H, J)$ be a regular Floer data. Then, there exist $B, \varepsilon\in\R^+$ such that for every smooth loop $\gamma: \mathbb{S}^1\to \hat{W}$ the following holds. If 
    \[\int_0^1\norm{ \gamma'(t)- X_{H_t}(\gamma(t)) }_J^2 dt\leqslant \varepsilon,\]
    then $\gamma(\mathbb{S}^1)\subset \hat{W}\setminus \left(\partial W\times(B, +\infty)\right).$
\end{lemma}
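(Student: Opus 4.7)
The plan is to argue by contradiction. The two key ingredients are: (i) a logarithmic estimate on the radial coordinate that prevents a loop from stretching too widely on the conical end, and (ii) a rescaling argument exploiting the degree-$1$ homogeneity of $H=rh$ in $r$ that rules out loops escaping to infinity. Throughout I write $u:=\gamma'-X_{H_t}(\gamma)$, so that the hypothesis reads $\int_0^1\norm{u}_J^2\,dt\leqslant\varepsilon$.

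By admissibility and regularity of $(H,J)$, the set of $1$-periodic orbits of $X_H$ is finite and entirely contained in $\widehat{W}\setminus(\partial W\times[T_0,+\infty))$ for some $T_0>0$. On the conical end, where $H=r\cdot h_t(y)$, a direct computation with $\omega=d(r\alpha)$ gives $dr(X_H)=-r\cdot dh_t(R_\alpha)$. Dividing by $r(\gamma)$ and integrating then yields, on any sub-interval of $\mathbb{S}^1$ on which $r(\gamma)\geqslant T_\partial$ (for a fixed $T_\partial\geqslant T_0$),
\[\bigl|\log r(\gamma(t))-\log r(\gamma(s))\bigr|\leqslant \norm{dh(R_\alpha)}_\infty + T_\partial^{-1}\norm{dr}_\infty\sqrt{\varepsilon}\ =:\ C,\]
valid whenever $\varepsilon\leqslant 1$. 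Fix $B>T_\partial\cdot e^{C+1}$. If a loop $\gamma$ of energy at most $\varepsilon$ meets both $\{r>B\}$ and $\{r<T_\partial\}$, apply the log estimate on the maximal sub-intervals of $\mathbb{S}^1$ where $r(\gamma)\geqslant T_\partial$: on each such interval the boundary values equal $T_\partial$, forcing $r(\gamma)\leqslant T_\partial e^C<B$ throughout, a contradiction. So such a loop must live entirely on the conical end, with the log estimate confining it to the shell $\partial W\times[Me^{-C},M]$ where $M:=\max r(\gamma)$.

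It remains to rule out the entirely-on-the-end case. Suppose toward a contradiction a sequence $\gamma_n\subset \partial W\times[T_\partial,+\infty)$ with $\int_0^1\norm{u_n}_J^2\,dt\leqslant 1/n$ and $M_n:=\max r(\gamma_n)\geqslant B$. Apply the radial scaling $\sigma_\lambda:(y,r)\mapsto(y,\lambda r)$ with $\lambda_n=1/M_n$. Since $H$ is homogeneous of degree $1$ in $r$, the vector field $X_H$ is $\sigma_\lambda$-invariant; and since $J$ is of SFT-type on the end, the compatible metric obeys $\sigma_\lambda^*g_J=\lambda\cdot g_J$. Consequently, the rescaled loops $\tilde\gamma_n:=\sigma_{1/M_n}\circ\gamma_n$ all lie in the fixed compact shell $\partial W\times[e^{-C},1]$, and their energies tend to zero:
\[\int_0^1 \norm{\tilde\gamma_n'-X_H(\tilde\gamma_n)}_J^2\,dt = M_n^{-1}\int_0^1\norm{u_n}_J^2\,dt \leqslant (nB)^{-1}\to 0.\]
Together with the boundedness of $X_H$ on the shell, this gives a uniform $W^{1,2}$-bound on $\{\tilde\gamma_n\}$; Rellich--Kondrachov plus the standard weak-limit argument produce a $C^0$-subsequential limit $\tilde\gamma_\infty$ that is a smooth $1$-periodic orbit of $X_H$ on the conical end, contradicting admissibility. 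Choosing $\varepsilon$ small enough then completes the proof.

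The main obstacle is the escape-to-infinity scenario: the Lipschitz constant of $X_H$ on the end grows linearly with $r$, which precludes any direct Gronwall-type estimate on $\norm{\gamma(t)-\phi_H^t(\gamma(0))}$. The rescaling argument circumvents this, but relies on the precise matching between the degree-$1$ homogeneity of $H$ and the weight of the SFT-compatible metric on the conical end. The partial-visit case, by contrast, is handled cleanly by the logarithmic bound alone.
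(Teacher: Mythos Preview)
The paper does not prove this lemma; it simply refers to Lemma~3.6 of \cite{MU}. Your two-step strategy---a logarithmic radial estimate to confine partial excursions into the end, followed by a rescaling/compactness argument on the symplectization to rule out loops living entirely on the end---is sound and gives a self-contained proof.

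Two points deserve tightening. First, the bound $T_\partial^{-1}\norm{dr}_\infty\sqrt{\varepsilon}$ is not correct as written: for an SFT-type $J$ one has $\norm{dr}_J\sim\sqrt{r}$ on the end, so $\norm{dr}_\infty=+\infty$. The estimate you actually need is $|dr(u)|\leqslant C_0\sqrt{r}\,\norm{u}_J$ (coming from $\norm{\partial_r}_J^2\sim 1/r$), which gives $|dr(u)|/r\leqslant C_0 T_\partial^{-1/2}\norm{u}_J$ on $\{r\geqslant T_\partial\}$; Cauchy--Schwarz then yields the constant $C=\norm{dh(R_\alpha)}_\infty+C_0 T_\partial^{-1/2}\sqrt{\varepsilon}$, and the rest of your argument proceeds unchanged. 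Second, in the rescaling step the limit $\tilde\gamma_\infty$ is a $1$-periodic orbit of $X_{rh}$ for the \emph{linear} Hamiltonian on the symplectization $\partial W\times\R^+$, not of the original $H$ on $\widehat W$ (which need not equal $rh$ on the shell $[e^{-C},1]$). This is harmless, but you should spell out why it still contradicts admissibility: the lifted flow $\Phi_{rh}^t$ commutes with every radial scaling $\sigma_\lambda$, so a periodic orbit at one radius yields one at every radius, in particular on the convex end of $\widehat W$. With these two clarifications your proof is complete.
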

\begin{proof}
    See Lemma 3.6 in \cite{MU}.
\end{proof}

\begin{proof}[Proof of Theorem~\ref{thm:noescape}]
    Let $u:P\to\hat{W}$ denote a solution of the Floer equation $\left(du - X_H(u)\otimes \beta\right)^{0,1}=0$ that satisfies $\mathbb{E}(u)<E$. Let $T=T_{H,J}$ and $Q=Q^J$ be as in Proposition~\ref{prop:ineq}. Denote $\Omega:= u^{-1}(\partial W\times(T, +\infty)).$ By Proposition~\ref{prop:ineq}, the function $\mu:\Omega\to\R : p\mapsto \log Q(u(p), p)$ satisfies the inequality
    \[ -dd^C\mu + \theta\wedge d\mu + d\theta\geqslant 0, \]
    where
    \[\theta:=\frac{(d_PQ\circ j)(u) + \{Q, H\}(u)\beta\circ j}{Q\circ u}.\]
    In the view of Proposition~\ref{prop:aleksandrov} (the Alexandrov weak maximum principle), it is enough to show that $\norm{\theta}_{L^2}$ and $\norm{d\theta}_{L^2}$ are bounded by a constant that does not depend on $u$. Denote
    \[(w, r):= \left. u \right|_{\Omega} : \Omega\to \partial W\times (T, +\infty).\]
    Since $\partial W$ is compact and since
    \[\theta =\frac{(d_Pq\circ j)(w)}{q(w)} + dh(R),\]
    it is enough to check that $\norm{dw}_{L^2}$ is bounded by a constant that does not depend on $u$. Here, $q$ and $h$ are the slopes of $Q$ and $H$, respectively. Let $g$ be a Riemannian metric on $\partial W$. Then, there exists $\varepsilon>0$ such that 
    \[\norm{\zeta + a r\partial_r}^2_J\geqslant \varepsilon r\cdot \norm{\zeta}_g^2\]
    for all $a\in \R$ and $\zeta\in T(\partial W)$. Hence,
    \begin{equation*}
        \norm{d w}_g^2 \leqslant \frac{1}{\varepsilon r}\cdot \norm{d u}_J^2\leqslant \frac{2}{\varepsilon r}\cdot\left( \norm{du - X_H(u)\otimes\beta}^2_J + \norm{X_H(u)\otimes\beta}^2_J \right).
    \end{equation*}
Since $\frac{1}{r}\norm{X_H(p,r)}_J$ is bounded and 
    \[\int_\Omega \norm{du - X_H(u)\otimes\beta}^2_J d{\rm vol}_P<E, \]
we finish the proof. 
\end{proof}

\section{Proofs of the main results} \label{sec-proof-main}

Let $(M, \xi)$ be a contact manifold with a Liouville filling $(W, \lambda)$. Recall that for any contact Hamiltonian $h: [0,1] \times M \to \R$, the ${\rm osc}(h)$-gapped module $\mathbb V_h$ is defined in Section \ref{sec-gap-mod} and briefly denoted by $\{{\rm HF}_*(\eta \#h)\}_{\eta \in \R \backslash \mathcal S_h}$, where $\mathcal S_h$ is defined in (\ref{dfn-RFspec}). By Definition \ref{dfn-si-gap}, for any class $a \in {\rm SH}_*(W)$, one defines the contact spectral invariant $c(a, \mathbb V_h)$. In this section, we will prove all those properties of $c(a, \mathbb V_h)$ as listed in Section \ref{sec-mra}, except the proof of Theorem \ref{thm-descend} will be postponed to Section \ref{sec-more-wd}. 

\begin{proof} [Proofs of Theorem \ref{intro-thm-1}] For admissible contact Hamiltonians $h, g: [0,1] \times M \to \R$, consider ${\rm osc}(h)$-gapped module $\V_h$ and ${\rm osc}(g)$-gapped module $\V_g$, 
\[ \V_h = \{{\rm HF}_*(\eta \#h)\}_{\eta \in \R \backslash \mathcal S_h} \,\,\,\,\mbox{and}\,\,\,\, \V_g = \{{\rm HF}_*(\eta \#g)\}_{\eta \in \R \backslash \mathcal S_g} \]
as constructed and discussed in Section \ref{sec-construction} and Example \ref{ex-chfh-gap}. For any almost optimal $m_{h,g}$-gapped sequence $\mathfrak a = \{\cdots, \eta_0, \eta_1, \cdots\}$ (which always exists, since the union of spectra in (\ref{dfn-RFspec}), $\mathcal S_h \cup \mathcal S_g$, is a discrete subset of $\R$), by definition, we have $\eta_{i+1} = \eta_i + m_{g,h} + {\o}^{\mathfrak a}_{i}(1)$ and one can study the corresponding $m_{h,g}$-gapped restrictions $\mathbb V_h(\mathfrak a)$ and $\mathbb V_g(\mathfrak a)$. If $h \neq g$, then we have the following computation, 
\begin{align*}
(\eta_{i+1} \# g) - (\eta_{i}\#h) & = (\eta_{i+1} + g_t \circ \phi_R^{-t \eta_{i+1}}) - (\eta_{i} + h_t \circ \phi_R^{-t \eta_{i}})
\\
& = (\eta_{i+1} -\eta_{i}) -  (h_t \circ \phi_R^{-t\eta_{i+1}} - g_t \circ \phi_R^{-t\eta_{i}}) \\
& \geq  \max\{{\rm osc}(g,h), {\rm osc}(h, g)\} - \left(\int_0^1 \max_M h_t - \min_M g_t\, dt\right) \geq 0.
\end{align*}
Therefore, $\eta_{i}\#h\leq \eta_{i+1} \# g$. A symmetric argument shows that $\eta_{i}\#g\leq \eta_{i+1} \# h$. Then we have the following commutative diagram, 
\[ \xymatrix{
{\rm HF}_*(\eta_{i}\#h) \ar[rr]^-{\iota^{\mathbb V_h(\mathfrak a)}_{\eta_{i} \, \eta_{i+1}}} \ar[rd]_-{\phi_i} && {\rm HF}_*(\eta_{i+1} \# h)\\
& {\rm HF}_*(\eta_{i}\#g)\ar[ru]_-{\psi_{i+1}}} \]
as well as a symmetric one. Here, $\phi_i$ and $\psi_i$ are Floer continuation maps. If $h = g$, then ${\rm osc}(h,g) = {\rm osc}(h)$ and obviously $\mathbb V_h$ and $\mathbb V_g$ are $0$-interleaved. These confirm the two diagrams in Definition \ref{dfn-gap-inter}, where $\lambda = {\rm osc}(h), \lambda' = {\rm osc}(g)$, and set $\delta = m_{h,g}$ if $h \neq g$ and $\delta = 0$ if $h = g$. \end{proof}

\begin{proof} [Proof of Theorem \ref{prop-shift}] 
By the definition of the operation $\#$, we have 
\begin{align*}
\eta \#(s \#h) & = \eta \# (h \circ \phi_R^{-st} + s) = h \circ \phi_R^{-(s+\eta) t} + (s+\eta). 
\end{align*}
This means that the parameterization set $\R \backslash \mathcal S_{s \# h}$ is a shift of $\R \backslash \mathcal S_h$ by $+s$, that is, $\R \backslash \mathcal S_h + s$.  Up to $\ep>0$, suppose $c(a, \mathbb V_h)$ is obtained via an ${\rm osc}(h)$-normalized persistence module $\mathbb V_h(\mathfrak a)$ for an index sequence $\mathfrak a: \Z \to \R \backslash \mathcal S_h$. Consider a new index sequence $\mathfrak b: \Z \to \R$ defined by 
\[ \mathfrak b(i) : = \mathfrak a(i) + s, \]
which results in an ${\rm osc}(s \#h)$-normalized persistence module $\mathbb V_h(\mathfrak b)$. Moreover, by the standard shifting property of the spectral invariant (of class $a \in {\rm SH}_*(W)$), we have 
\[ c(a, \mathbb V_{s \# h}(\mathfrak b)) = c(a, \mathbb V_h(\mathfrak a)) + s = c(a, \mathbb V_h) + s. \]
Therefore, by definition, $c(a, \mathbb V_{s \#h}) \leq c(a, \mathbb V_h) + s$. Then a symmetric argument implies the desired conclusion. 
\end{proof}

Here, let us emphasize a slightly different setting, where a similar argument as in the proof of Proposition \ref{prop-shift} would lead to a completely different conclusion. Explicitly, the discussion below shows that the shift property in Theorem \ref{prop-shift} essentially comes from the shift of the ambient index set $\R \backslash \mathcal S_h$, instead of the shift on the gapped restrictions (as persistence modules). 

Assume the index set $\R \backslash \mathcal S_h$ is {\it not} shifted. Still, let us consider the index sequence $\mathfrak b: \Z \to \R$ as above defined by $\mathfrak b(i) : = \mathfrak a(i) + s$. Without loss of generality, assume $s \geq 0$. By adjusting the constants ${\o}^{\mathfrak a}_i(1)$ in (1) of Definition \ref{dfn-restriction}, together with the discreteness of $\mathcal S_h$ given by Lemma \ref{lemma-spec}, we can assume that $\mathfrak b: \Z \to \R \backslash \mathcal S_h$ as well. Then $\mathbb V_h(\mathfrak b)$ is an ${\rm osc}(h)$-almost optimal persistence module (but not necessarily ${\rm osc}(h)$-normalized). Then by Remark \ref{rmk-generalized-csi}, for any class $a \in {\rm SH}_*(W)$, we have 
\[ c(a, \mathbb V_{s\#h}) \leq c(a, \mathbb V_h(\mathfrak b)) - m^{\mathfrak b} {\rm osc}(h) \]
where $m^{\mathfrak b}$ is the unique integer such that $\mathfrak b - m^{\mathfrak b} {\rm osc}(h)$ is ${\rm osc}(h)$-normalized. Here, more explicitly, $m^{\mathfrak b} =  \floor{\frac{s}{{\rm osc}(h)}}$. Moreover,  we have 
\begin{align*}
c(a, \mathbb V_{s\#h}) & \leq  c(a, \mathbb V_h(\mathfrak a)) + s -  \left\lfloor{\frac{s}{{\rm osc}(h)}}\right\rfloor \cdot {\rm osc}(h) \\
& \leq c(a, \mathbb V_h(\mathfrak a)) + s - \left(\frac{s}{{\rm osc}(h)} -1 \right)\cdot {\rm osc}(h)\\
& = c(a, \mathbb V_h(\mathfrak a)) + s - s + {\rm osc}(h) = c(a, \mathbb V_h) + {\rm osc}(h)
\end{align*}
where we use the fact that $c(a, \mathbb V_h(\mathfrak b)) = c(a, \mathbb V_h(\mathfrak a)) + s$. By a symmetric argument that replaces $s$ by $-s$, we get the following estimation 
\[ |c(a, \mathbb V_{s\# h}) - c(a, \mathbb V_h)| \leq {\rm osc}(h)\]
for any $s \in \R$. In fact, if $\frac{s}{{\rm osc}(h)} \in \Z$, then $c(a, \mathbb V_{s\# h}) = c(a, \mathbb V_h)$. 

\begin{proof} [Proof of Theorem \ref{prop-spectrality}] Since the set $\mathcal S_h$ is discrete, if $c(a,\mathbb V_h) \notin \mathcal S_h$, then there exists some sufficiently small $\ep>0$ such that 
\[ (c(a,\mathbb V_h) - \ep, c(a,\mathbb V_h) + \ep) \subset \R \backslash \mathcal S_h.\] 
By definition, for this $\ep$, there exists an ${\rm osc}(h)$-gapped normalized restriction $\mathbb V(\mathfrak a)$ for some $\mathfrak a: \Z \to \R \backslash \mathcal S_h$ such that $c(a,\mathbb V_h) \leq c(a, \mathbb V(\mathfrak a)) < c(a,\mathbb V_h) + \ep$. Moreover, we have $c(a, \mathbb V(\mathfrak a)) = \eta_{i_*} = \mathfrak a(i_*)$ for some $i_* \in \Z$, due to the spectrality of persistence module $c(a, \mathbb V(\mathfrak a))$ from \cite{Ush-spec}. By adjusting the constant ${\o}^{\mathfrak a}_i(1)$ in (1) in Definition \ref{dfn-restriction}, we can assume that the distance from $\eta_i$ to $\mathcal S_h$ is at least $\ep$ for {\it any} $i \in \N$. In particular, this holds for $\eta_{i_*}$ by its defining property, without changing the associated ${\o}^{\mathfrak a}_{i_*}(1)$.

Now, consider a new $s$-family of index sequences $\mathfrak b_s: \Z \to \R$ by $$\mathfrak b_s(i) := \mathfrak a(i) - s \ep = \eta_i  - s \ep$$
for $s \in [0,1]$. We can shrink $\ep$ if necessary so that the following $s$-family contact Hamiltonians
\[ (\eta_i - s \ep) \# h = (-s \ep) \# (\eta_i \#h) \]
are all admissible for any $s \in [0,1]$ and for any $i \in \Z$ (since $\eta_i \#h$ are assumed to be admissible). Then \cite[Theorem 1]{UZ22} implies that ${\rm HF}_*({\eta}_i \#h) \simeq {\rm HF}_*((\eta_i - s\ep) \#h)$. Moreover, for sufficiently small $\ep$, the index sequences $\mathfrak b_1$ is also normalized. Therefore, consider the ${\rm osc}(h)$-gapped restriction $\mathbb V(\mathfrak b_1)$. The argument above implies that $\mathbb V(\mathfrak b_1)$ is simply a $(-\ep)$-shift of $\mathbb V(\mathfrak a)$. Therefore, by definition, we have 
\[ c(a, \mathbb V_h) \leq c(a, \mathbb V(\mathfrak b_1)) = c(a, \mathbb V(\mathfrak a)) - \ep < c(a, \mathbb V_h) \]
which provides the desired contradiction. \end{proof}

\begin{remark} The computational Example in Section \ref{sec-ex} supports Theorem \ref{prop-spectrality}. \end{remark}

\begin{proof} [Proof of Theorem \ref{prop-duality}] Up to $\ep>0$, suppose $c(a, \mathbb V_h)$ is obtained via a normalized ${\rm osc}(h)$-gapped restriction $\mathbb V_h(\mathfrak a)$ for some index sequence $\mathfrak a: \Z \to \R \backslash \mathcal S_h$. Now, consider a new sequence $\mathfrak b: \Z \to \R\backslash \mathcal S_h$ (note that $\mathcal S_h = \mathcal S_{\bar{h}}$) defined by 
\[ \mathfrak b(i)  = - \mathfrak a(i) = - \eta_i. \]
Clearly, the associated persistence module $\mathbb V_{\bar{h}}(\mathfrak b)$ is a normalized ${\rm osc}(h)$-gapped restriction of $\mathbb V_{\bar{h}}$. Next, for each $i \in \Z$ and any point $(t,x) \in [0,1] \times M$, we have 
\begin{align*}
\left((-\eta_i) \# \bar{h}\right)(t,x) & = (-\eta_i) + \bar{h}(t, \phi_R^{\eta_i t}(x)) \\
& = -\eta_i - h(-t, \phi_R^{\eta_i t}(x)) \\
& = - \left(\eta_i + h\circ \phi_R^{-\eta_i t}\right)(-t,x) = \left(\overline{\eta_i \#h}\right) (t,x).
\end{align*}
This implies that 
\begin{equation}\label{hf-iso}
{\rm HF}_*(\eta_i \#h)\simeq {\rm HF}^{-*}(\overline{\eta_i \#h}) \simeq {\rm HF}^{-*}((-\eta_i) \# \bar{h}). 
\end{equation}
The first isomorphism in (\ref{hf-iso}) is precisely given by the Poincar\'e duality as in (\ref{chain-iso}). Together with the second isomorphism in (\ref{hf-iso}), we know that $\mathbb V_h(\mathfrak a)$ and $\mathbb V_{\bar{h}}(\mathfrak b)$ are dual to each other. Moreover, if the class $a \in {\rm SH}_*(W)$ is generated by the basis element $\{e_1, ..., e_m\}$, then ${\rm PD}(a)$ is generated by $\{{\rm PD}(e_1), ..., {\rm PD}(e_m)\}$. Since ${\rm PD}$ changes the sign of the filtration, one gets $c({\rm PD}(a), \mathbb V_{\bar{h}}(\mathfrak b)) = -c(a, \mathbb V_h(\mathfrak a))$. Therefore, 
\[ c({\rm PD}(a), \mathbb V_{\bar{h}}) \leq c({\rm PD}(a), \mathbb V_{\bar{h}}(\mathfrak b)) = -c(a, \mathbb V_h(\mathfrak a)) = - c(a, \mathbb V_h). \]
Since $a = {\rm PD}({\rm PD}(a))$, a symmetric argument yields the desired conclusion. \end{proof}

Next, we will prove Theorem \ref{prop:triangle}, which is based on the existence of the product ${\rm HF}_\ast(h)\otimes {\rm HF}_\ast(g)\to {\rm HF}_\ast(h\hat{\#}g)$ from Section~\ref{sec-pp}. This product is, however, not well defined for arbitrary contact Hamiltonians $h,g$ as in Theorem~\ref{prop:triangle}. Namely, for the product to be well defined, the contact Hamiltonians $h_t$ and $g_t$ are required to be equal to 0 for $t$ in a neighbourhood of $\mathbb{Z}\subset \R$. Now, we describe a procedure, the reparametrization of the time interval, that makes arbitrary contact Hamiltonains suitable for the product.

 Let $\mu:[0,1]\to[0,1]$ be a smooth function that is equal to 0 near 0 and to 1 near 1. The function $\mu$ can be extended to a smooth function $\R\to\R$ by requiring it to satisfy $\mu(t+1)= 1+\mu(t)$ for all $t\in R$. In particular, the derivative $\mu':\R\to\R$ is a 1-periodic smooth function that is eqiual to $0$ in a neighbourhood of $\mathbb{Z}\subset\R.$ Given a contact Hamiltonian $f$, denote by $\tilde{f}$ the reparametrization of $f$ via $\mu$, i.e.  $\tilde{f}_t(x):= \mu'(t)\cdot f_{\mu(t)}(x).$ If the contact Hamiltonian $f$ is admissible, then so is $\tilde{f}$ and there exists an isomorphism ${\rm HF}_\ast(f)\to {\rm HF}_\ast(\tilde{f})$ given on generators by $\gamma\mapsto \gamma\circ\mu$. This isomorphism commutes with the continuation maps. In particular, the persistent modules $\{{\rm HF}_\ast(\eta\#h)\}_{\eta \in \R \backslash \mathcal S_h}$ and $\{{\rm HF}_\ast(\widetilde{\eta\#h})\}_{\eta \in \R \backslash \mathcal S_h}$ are isomorphic.

In fact, even a ``partial'' reparametrization gives rise to an isomorphism of Floer homologies. Namely, there exists an isomorphism ${\rm HF}_\ast(h\#g)\to {\rm HF}_\ast(\tilde{h}\#g)$, where $h$ and $g$ are contact Hamiltonians such that $h\:\#\:g$ is admissible. This isomorphism is defined as follows. Let $H_t, G_t:\widehat{W}\to\R$ be non-degenerate Hamiltonians that have slopes $h$ and $g$, respectively. Denote by $\tilde{H}_t:\widehat{W}\to\R$ the Hamiltonian given by $\tilde{H}_t(p):= \mu'(t)\cdot H_{\mu(t)}(p)$. The time-1 maps of the Hamiltonians $H$ and $\tilde{H}$ coincide. Therefore, $\phi_{\tilde{H}\# G}^1= \phi_{{H\# G}}^1. $ Denote by $\mathcal{F}$ the set of fixed points of the map $\phi_{\tilde{H}\# G}^1= \phi_{{H\# G}}^1.$ The generators of ${\rm HF}_\ast(H\#G)$ are of the form $t\mapsto\phi^t_{H\#G}(p)$, where $p\in\mathcal{F}$. Similarly, the generators for ${\rm HF}_\ast(\tilde{H}\#G)$ are of the form $t\mapsto \phi^t_{\tilde{H}\# G}(p)$, where $p\in\mathcal{F}$. The isomorphism ${\rm HF}_\ast(H\#G)\to {\rm HF}_\ast(\tilde{H}\# G)$ on generators is given by
\[\phi^\bullet_{H\#G}(p)\mapsto \phi^\bullet_{\tilde{H}\# G}(p) \]
for any $p\in\mathcal{F}$. 
This isomorphism commutes with the continuation maps and is an isomorphism already on the chain level. As a consequence, the Floer homology groups ${\rm HF}_\ast(f\# h)$ and ${\rm HF}_\ast(k\# h)$ are isomorphisms if $f_t:\partial W\to\R$ is a contact Hamiltonian that depends only on time and $k:=\int_0^1 f_t dt\in \R$. This fact will be used in the proof of Theorem~\ref{prop:triangle}. 

The reparametrization isomorphisms do not change the image under canonical morphisms, in other words, the following diagram (consisting of a reparametrization isomorphism and canonical morphisms) commutes
\begin{equation}\label{cd:reparcan}
    \begin{tikzcd}
        &{\rm SH}_\ast(W)& \\
        {\rm HF}_\ast(h\# g)\arrow{rr}\arrow{ru} & & {\rm HF}_\ast(\tilde{h}\# g)\arrow{lu} 
    \end{tikzcd}
\end{equation}
This is a consequence of Section~9 in \cite{U-selective}. Indeed, the reparametrization isomorphism
\[{\rm HF}_\ast(h\# g)\to {\rm HF}_\ast(\tilde{h}\# g)\]
can be seen as the isomorphism
\[\mathcal{B}(\{f^s\}): {\rm HF}_\ast(f^0= h\# g)\to {\rm HF}_\ast(f^1= \tilde{h}\# g)\]
that is associated to a smooth family $f_t^s:\partial W\to\R$ of admissible contact Hamiltonians. The family $f^s$ can be obtained by smoothly changing the parametrization of $h$ from the identity $t\mapsto t$ to $\mu$. For more details, see Section \ref{sec-zigzag}. If $\overline{f}^s_t:\partial W\to\R$ is some other smooth family of admissible contact Hamiltonians such that $\overline{f}^s_t\geqslant f_t^s$ for all $s$ and $t$, then the diagram 
\[
\begin{tikzcd}
    {\rm HF}_\ast(f^0)\arrow{r}{\mathcal{B}(\{f^s\})}\arrow{d}& {\rm HF}_\ast(f^1)\arrow{d}\\
    {\rm HF}_\ast(\overline{f}^0) \arrow{r}{\mathcal{B}(\{\overline{f}^s\})}& {\rm HF}_\ast(\overline{f}^1)
\end{tikzcd}
\]
commutes (the vertical arrows correspond to the continuation maps). By taking $ \overline{f}^s_t=a$ for $a\in\R$ such that $a\geqslant f^s_t$, we obtain that the diagram
\[
\begin{tikzcd}
        &{\rm HF}_\ast(a)& \\
        {\rm HF}_\ast(f^0)\arrow{rr}\arrow{ru} & & {\rm HF}_\ast(f^1)\arrow{lu} 
    \end{tikzcd}
\]
commutes. This directly implies that the diagram \eqref{cd:reparcan} commutes as well.

\begin{proof}[Proof of Theorem~\ref{prop:triangle}]
    Let $\mu:\R\to\R$ be a non-decreasing smooth function such that $\mu(t+1)= 1+ \mu(t)$ for all $t\in\R$ and such that $\mu$ is equal to 0 near 0 and to 1 near 1. For a contact Hamiltonian $f_t:\partial W\to\R$ denote by $\tilde{t}_t:\partial W\to\R$ the contact Hamiltonian given by $\tilde{f}_t(x):=\mu'(t)\cdot f_{\mu(t)}(x).$ Let $a, b\in R$. The following inequality holds:
    \begin{align*}
        \left(\widetilde{a\#h}\right)_t =& \left(\tilde{a}\#\tilde{h}\right)_t\\
        =& \mu'(t)\cdot a + \mu'(t)\cdot h_{\mu(t)}\circ\varphi_R^{-a\mu(t)}\\
        =& \mu'(t)\cdot a + \mu'(t)\cdot h_{\mu(t)} +\mu'(t)\cdot \left(h_{\mu(t)}\circ\varphi_R^{-a\mu(t)} - h_{\mu(t)}\right) \\
        \leqslant &  \mu'(t)\cdot a + \mu'(t)\cdot h_{\mu(t)} + \overline{\op{osc}}\:h.
    \end{align*}
    Similarly, we have $(\widetilde{b\#g})_t \leqslant \mu'(t)\cdot b + \mu'(t)\cdot g_{\mu(t)} + \overline{\op{osc}}\:g$. 
    Denote by $\mu_L, \mu_R:\R\to\R$ the smooth functions with 1-periodic derivatives given by $\mu_L(0)=\mu_R(0)=0$ and
    \begin{align*}
        & \mu_L'(t):=\left\{\begin{matrix} \mu'(2t) & t\in\left[0, \frac{1}{2}\right]\\ 0 & t\in\left[\frac{1}{2}, 1\right]\end{matrix}\right.\\
        & \mu_R'(t):=\left\{\begin{matrix} 0 & t\in\left[0, \frac{1}{2}\right]\\ \mu'(2t-1) & t\in\left[\frac{1}{2}, 1\right]. \end{matrix}\right.
    \end{align*}
    In particular, we have $\mu'_L(t)+\mu'_R(t)= \mu'(2t)$. The inequalities above imply
    \begin{align*}
        \left(\widetilde{a\# h}\right)\hat{\#}\left(\widetilde{b\#g}\right) \leqslant & \:\mu_L'(t) a + \mu_L'(t)h_{\mu_L(t)} + \mu_L'(t) \:\overline{\op{osc}} h\\
        & + \mu_R'(t) b + \mu_R'(t)h_{\mu_R(t)} + \mu_R'(t) \:\overline{\op{osc}} g\\
        = &\:\mu_L'(t)\left(a + \overline{\op{osc}} h\right) + \mu_R'(t)\left(b+ \overline{\op{osc}} g\right) + \tilde{h}\hat{\#}\tilde{g}.
    \end{align*}
    Denote $k:= (\max_t \mu'(t)) \cdot \max\{ \overline{\op{osc}}\: h, \overline{\op{osc}}\: g \}$ and denote \[\nu(t):=\int_0^t \left(\mu_L'(t)\left(a + \overline{\op{osc}} h\right) + \mu_R'(t)\left(b+ \overline{\op{osc}} g\right) + k\right) ds.\]
    Then,
    \begin{align*}
        \left(\widetilde{a\# h}\right)\hat{\#}\left(\widetilde{b\#g}\right) \leqslant & \nu'(t) - k + \tilde{h}\hat{\#}\tilde{g}\\
         = &\nu'(t)-k + \left(\tilde{h}\:\hat{\#}\:\tilde{g}\right)\circ\varphi_R^{-\nu(t)} + \left( \tilde{h}\hat{\#}\tilde{g} -\left(\tilde{h}\hat{\#}\tilde{g}\right)\circ\varphi_R^{-\nu(t)}\right)\\
        \leqslant & \nu'(t) - k + \left(\tilde{h}\hat{\#}\tilde{g}\right)\circ\varphi_R^{-\nu(t)} + \overline{\op{osc}}\left(\tilde{h}\hat{\#}\tilde{g}\right)\\
        \leqslant & \nu'(t)+ \left(\tilde{h}\hat{\#}\tilde{g}\right)\circ\varphi_R^{-\nu(t)}\\
        = & \nu\#\left(\tilde{h}\hat{\#}\tilde{g}\right).
    \end{align*}
    In the inequality above, we used $\overline{\op{osc}}(\tilde{h}\hat{\#}\tilde{g})\leqslant k.$ Therefore, by Section \ref{sec-pp}, there is a well-defined product map
    \[{\rm HF}_\ast(\widetilde{a\# h})\otimes {\rm HF}_\ast(\widetilde{b\#g})\to {\rm HF}_\ast\left(\nu\# \left(\tilde{h}\hat{\#}\tilde{g}\right)\right)\]
    whenever the contact Hamiltonians in question are admissible. By reparametrization and partial reparametrization, the Floer homologies ${\rm HF}_\ast(\widetilde{a\# h})$, ${\rm HF}_\ast(\widetilde{b\#g})$, and ${\rm HF}_\ast(\nu\:\# (\tilde{h}\:\hat{\#}\:\tilde{g}))$ are isomorphic to ${\rm HF}_\ast(a\#h)$, ${\rm HF}_\ast(b\#g)$, and 
    \[ {\rm HF}_\ast\left(\left( a +\overline{\op{osc}}\: h + b + \overline{\op{osc}}\:g + k\right)\#(h\hat{\#}g)\right),\]
    respectively. In particular, if
    \begin{align*}
        & \theta\in\op{Im}\left(\iota_{a}: {\rm HF}_\ast(a\# h)\to {\rm SH}_\ast(W)\right),\\
        & \eta\in\op{Im}\left(\iota_b: {\rm HF}_\ast(b\# g)\to {\rm SH}_\ast(W)\right),
    \end{align*}
    then
    $\theta\ast \eta \in\op{im}(\iota_{a+b+\Delta}: {\rm HF}_\ast((a+b+\Delta)\#(h\hat{\#}g))\to {\rm SH}_\ast(W)).$
    Here, 
    $\Delta:= \overline{\op{osc}}\: h + \overline{\op{osc}}\:g + (\max_t \mu'(t)) \cdot \max\{ \overline{\op{osc}}\: h, \overline{\op{osc}}\: g \}.$ Therefore, by (\ref{dfn-si-classical}), 
    \[c(\theta\ast\eta, \mathbb V_{h\hat{\#}g})\leqslant c(\theta, \mathbb V_h) + c(\eta, \mathbb V_g) + \Delta.\]
    Since $\overline{\op{osc}}\: h, \overline{\op{osc}}\:g \leqslant (\max_t \mu'(t)) \cdot \max\{ \overline{\op{osc}}\: h, \overline{\op{osc}}\: g \}$, and since $\mu$ can be chosen such that $\mu'(t)\leqslant 1+\varepsilon$ for arbitrary $\varepsilon>0$, we obtain the desired inequality 
\end{proof}

\section{Descended definitions} \label{sec-more-wd}

In this section, we prove Theorem \ref{thm-descend}, which will be divided into two situations. 

\subsection{Zigzag isomorphism} \label{sec-zigzag} Recall that for any $s$-smooth family of {\it admissible} contact Hamiltonians $h^s: [0,1] \times M \to \R$ for $s \in [0,1]$, in \cite{U-selective}, one can construct an isomorphism 
\[\mathcal B(\{h^s\}_{s \in [0,1]}): {\rm HF}_*(h^0) \to {\rm HF}_*(h^1). \]
More explicitly, different from the isomorphism between ${\rm HF}_*(h^0)$ and ${\rm HF}_*(h^1)$ via the bifurcation method carried out in \cite[Theorem 1]{UZ22}, the isomorphism $\mathcal B(\{h^s\}_{s \in [0,1]})$ is constructed by the following zigzag approach, 
\[ \xymatrixcolsep{1.5pc}\xymatrix{
{\rm HF}_*(h^0)  && {\rm HF}_*(h^{s_1})  && \,\,\,\,\,\cdots &&  {\rm HF}_*(h^1)\\
& {\rm HF}_*(g^0) \ar[lu]_-{\Phi^0} \ar[ru]^-{\Psi^1}  && \cdots \ar[lu]_-{\Phi^1} \,\,\, \ar[ru] && {\rm HF}_*(g^{k-1}) \ar[lu] \ar[ru]^-{\Psi^{k-1}}}
\]
that is, $\mathcal B(\{h^s\}_{s \in [0,1]}) : = \Psi^{k-1} \circ \cdots \circ (\Phi^1)^{-1} \circ \Psi^1 \circ (\Phi^0)^{-1}$. Here, $h^{s_i}$ for $i = 0, ..., k$ are admissible contact Hamiltonians that are discretely chosen along the isotopy $h^s$ where $h^{s_0} = h^0$ and $h^{s_k} = h^1$; the admissible contact Hamiltonian $g^{j}$ for $j = 0, ..., k-1$ are chosen such that $g_j \leq \min\{h^{s_j}, h^{s_{j+1}}\}$; all the $\Phi^i$ and $\Psi^j$ are isomorphisms given by the Floer continuation maps. Note that one can make such choices due to the important observation: the set of admissible contact Hamiltonians is an open subset of all the smooth functions on $[0,1] \times M$ (hence, in particular, each $g_j$ are sufficiently close to $h^{s_j}$ and $h^{s_{j+1}}$, say, in a $C^2$-sense). Moreover, we emphasize that the isomorphism $\mathcal B(\{h^s\}_{s \in [0,1]})$ preserves the gradings. Finally, one can easily check the induced isomorphism $\mathcal B$ above satisfies the following composition-concatenation property, 
\begin{equation} \label{cc-formula}
\mathcal B(\{f^s\}_{s \in [0,1]} \circ \{h^s\}_{s \in [0,1]}) = \mathcal B(\{f^s\}_{s \in [0,1]}) \circ \mathcal B(\{h^s\}_{s \in [0,1]})
\end{equation} 
whenever $f^1 = h^0$. Here, the notation $\circ$ for two $s$-smooth families is the concatenation of two paths and it is parametrized by $s \in [0,2]$. 

Now, for our purpose, consider a contractible loop $\phi = \{\phi^t\}_{t \in [0,1]}$ in ${\rm Cont}(M, \xi)$. Pick a disk-family of contactomorphisms $\sigma: {\mathbb D}^2 \to {\rm Cont}(M, \xi)$ denoted by $\sigma^s_t$, which provides a homotopy from $\mathds{1}$ to $\phi$. In particular, for each intermediate homotopy parameter $s \in [0,1]$, the contact Hamiltonian of contact isotopy $\{\sigma^s_t\}_{t \in [0,1]}$ is denoted by $f^s_t: [0,1] \times M \to \R$. Then, for any admissible contact Hamiltonian $h_t: [0,1] \times M \to \R$, consider the following $s$-smooth family of admissible contact Hamiltonians 
\[ h^s_t: = {f^s_t} \# h_t \,\,\,\,\mbox{for $s \in [0,1]$}. \]
Then, induced by this $h^s_t$, the zigzag construction above implies that we have an isomorphism,  
\begin{equation} \label{B-sigma}
\mathcal B(\sigma) = \mathcal B(\{h^s\}_{s \in [0,1]}): {\rm HF}_*(h) \to {\rm HF}_*(f \# h).
\end{equation}
This isomorphism formally depends on the choice of $\sigma$, where the difference of two ``cappings'' $\sigma$ and $\sigma'$ represents an element in the group $\pi_2({\rm Cont}(M, \xi))$. However, the discussion so far only relies on the contact manifold $(M, \xi)$ without too much referring to the Liouville filling $(W, \alpha)$ (cf. the identification (\ref{identification})). 

Next, let us consider the following set 
\begin{equation} \label{wt-hf}
\widetilde{{\rm HF}}_*(h) : = \frac{{\rm HF}_*(h)}{\pi_2({\rm Cont}(M, \xi))}.
\end{equation}
Here is the key proposition that directly implies the conclusion of Theorem \ref{thm-descend} under the triviality hypothesis on $\pi_2({\rm Cont}(M, \xi))$. 

\begin{prop} \label{prop-hf-wt} With the definition in (\ref{wt-hf}), for any contact Hamiltonians $h, f: [0,1] \times M \to \R$ such that $\{\phi^t_f\}_{t \in [0,1]}$ is a contractible loop in ${\rm Cont}(M, \xi)$, then $\widetilde{{\rm HF}}_*(h) \simeq \widetilde{{\rm HF}}_*(f \# h)$. In particular, $\widetilde{{\rm HF}}_*$ is well-defined on $\widetilde{{\rm Cont}}_0(M, \xi)$. \end{prop}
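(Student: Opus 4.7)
The plan is to leverage the zigzag isomorphism $\mathcal{B}(\sigma): {\rm HF}_*(h) \to {\rm HF}_*(f\#h)$ from (\ref{B-sigma}) and show that its $\sigma$-dependence is precisely the action of $\pi_2({\rm Cont}(M,\xi))$ on ${\rm HF}_*(h)$ implicit in the quotient (\ref{wt-hf}). Once this is done, the induced map on $\widetilde{{\rm HF}}_*(h)$ is canonical, which gives the first claim; the ``in particular'' statement then follows by writing any two isotopies $\{\phi^t_h\}, \{\phi^t_k\}$ representing the same class in $\widetilde{{\rm Cont}}_0(M,\xi)$ as $\phi_k^t = \phi_f^t \circ \phi_h^t$ with $\{\phi_f^t\}$ contractible, so that $k = f \# h$ by (\ref{cocycle}) and the first claim applies.

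The first step is to make precise the $\pi_2({\rm Cont}(M,\xi))$-action. Represent $\tau \in \pi_2({\rm Cont}(M,\xi))$ by a two-parameter family $\tau^s_t$ of contactomorphisms that caps the trivial loop at $\mathds{1}$; applying the recipe defining $\mathcal{B}(\sigma)$ with $f \equiv 0$ produces a zigzag automorphism $\mathcal{B}(\tau) \in \mathrm{Aut}({\rm HF}_*(h))$. The composition-concatenation formula (\ref{cc-formula}) immediately upgrades this to a group homomorphism, \emph{provided} we first verify that $\mathcal{B}(\tau)$ depends only on the $\pi_2$-class of $\tau$. This is the step I expect to be the main technical obstacle: it requires extending the zigzag argument from one-parameter to two-parameter families of cappings and showing that the composition of zigzag isomorphisms along the boundary of a three-dimensional slab of admissible contact Hamiltonians is the identity. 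I expect that the same openness of the admissible locus exploited in Section \ref{sec-zigzag} permits a sufficiently fine subdivision to carry this out, but the argument needs to be spelled out carefully.

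The second step is then routine: given two cappings $\sigma, \sigma'$ of $\{\phi_f^t\}$, the concatenation $\sigma' \cdot \bar\sigma$ (reverse $\sigma$, then follow by $\sigma'$) is a two-parameter family capping the trivial loop, representing a class $\tau \in \pi_2({\rm Cont}(M,\xi))$. Writing $\sigma'$ as the concatenation $(\sigma' \cdot \bar\sigma) \cdot \sigma$ and applying (\ref{cc-formula}) yields
\[ \mathcal{B}(\sigma') = \mathcal{B}(\sigma) \circ \mathcal{B}(\tau), \]
so $\mathcal{B}(\sigma)$ and $\mathcal{B}(\sigma')$ descend to the same map on $\widetilde{{\rm HF}}_*(h)$, producing the canonical isomorphism $\widetilde{{\rm HF}}_*(h) \simeq \widetilde{{\rm HF}}_*(f\#h)$. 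The final sentence of the proposition follows as outlined in the first paragraph, completing the plan.
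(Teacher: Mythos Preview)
Your proposal is correct and follows essentially the same route as the paper: both reduce the well-definedness to the fact that for two cappings $\sigma,\sigma'$ of the same contractible loop, the discrepancy $\mathcal{B}(\sigma')\circ\mathcal{B}(\sigma)^{-1}$ equals $\mathcal{B}$ applied to the sphere $\sigma'\cdot\bar\sigma$, and hence lies in the $\pi_2({\rm Cont}(M,\xi))$-action. You are in fact more explicit than the paper about the one genuine technical point---that $\mathcal{B}$ applied to a two-parameter family depends only on its homotopy class---which the paper simply asserts with ``one can verify''; your sketch of how to prove it via subdivision using openness of the admissible locus is the right idea.
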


\begin{proof} Consider two disk-families of contactomorphisms $\sigma, \tau: {\mathbb D}^2 \to {\rm Cont}(M, \xi)$ that bound the same contact isotopy at time-1, that is, $\sigma^1_t = \tau^1_t$. Denote the corresponding contact Hamiltonian these time-1 maps by $f$. The following diagram 
\[ \xymatrixcolsep{5pc} \xymatrix{
{\rm HF}_*(h) \ar[r]^{\mathcal B(\sigma)} \ar[d]_{\mathds{1}} & {\rm HF}_*(f \#h) \ar[d]^-{\mathds{1}}\\
{\rm HF}_*(h) \ar[r]^{\mathcal B(\tau)} & {\rm HF}_*(f \#h)} \]
is not necessarily commutative. However, the difference is given by the composition, 
\[ \mathcal B(\sigma)  \circ \mathcal B(\tau)^{-1} = \mathcal B(\sigma \circ \tau^{-1}) \]
where the equality comes from the formula (\ref{cc-formula}). Here, $\tau^{-1} = \tau^{1-s}$ (hence, $\sigma^1 = (\tau^{-1})^0$ by our assumption). Moreover, the concatenation $\sigma \circ \tau^{-1}$ represents an $S^2$-family of contactomorphisms on $(M, \xi)$. Furthermore, one can verify that the induced morphism $\mathcal B$ only replies on the homotopy class of $\sigma \circ \tau^{-1}$. Therefore, up to the actions by elements in $\pi_2({\rm Cont}(M, \xi))$, the diagram above commutes. In this way, we obtain the desired conclusion. 
\end{proof}

\subsection{Naturality} Recall that $\pi_1({\rm Ham}_c(\widehat{W}, \omega))$ denotes the group of compactly supported Hamiltonian diffeomorphisms on the completion $\widehat{W}$. By definition, each element in $\pi_1({\rm Ham}_c(\widehat{W}, \omega))$, say $\phi = \{\phi_F^t\}_{t \in [0,1]}$, acts as a natural identification on ${\rm HF}_*(h)$ whenever an extended Hamiltonian of $h$, denoted by $H$, on $[0,1] \times \widehat{W}$ is fixed. Namely, suppose ${\rm HF}_*(h) = {\rm HF}_*(H)$, then, on the chain level, one can consider the action defined by 
\begin{equation} \label{identification}
\mathcal N(F): \gamma(t) \longrightarrow (\phi_F^t)^{-1} (\gamma(t))
\end{equation}
for any generator $\gamma(t)$ of the Floer chain complex ${\rm CF}_*(H)$ of ${\rm HF}_*(H)$. This was considered in \cite[Section 2.7]{U-auto}. This provides a straightforward isomorphism between ${\rm HF}_*(H)$ and ${\rm HF}_*(\bar{F} \#H)$, where $\bar{F} \# H$ generates the Hamiltonian isotopy $(\phi_F^{t})^{-1} \circ \phi_H^t$ with an explicit formula given as follows, 
\begin{equation} \label{cocycle-s}
(\bar{F} \# H)_t = (H_t - F_t) \circ \phi_F^t.
\end{equation}
On the one hand, observe that in the convex end, since $H_t(r,x) = e^r \cdot h_t(x)$ for $r \in \R$ and similarly to $F_t$, in the convex end, we have 
\begin{align*}
(\bar{F} \# H)_t(r,x) & = (H_t - F_t)(r - \kappa(f_t)(x), \phi_f^t(x))\\
& = e^r \cdot \left(e^{-\kappa(\phi_f^t)(x)} \cdot \left((h_t - f_t) \circ \phi_f^t\right)(x)\right) = e^r \cdot (\bar{f} \# h)_t(x). 
\end{align*}
By (\ref{cocycle}), we conclude that ${\rm HF}_*(\bar{F} \#H)$ defines ${\rm HF}_*(\bar{f} \# h)$. On the other hand, for any other choice of the extended Hamiltonian $G$ of $h$, by \cite[Lemma 2.29]{U-auto}, we have the following commutative diagram, 
\begin{equation} \label{N-C-com}
\xymatrixcolsep{5pc} \xymatrix{
{\rm HF}_*(H) \ar[r]^-{\mathcal N(F)} \ar[d]_-{\Phi_{H, G}} & {\rm HF}_*(\bar{F} \#H) \ar[d]^-{\Phi_{\bar{F} \#H, \bar{F} \#G}} \\
{\rm HF}_*(G) \ar[r]^-{\mathcal N(F)} & {\rm HF}_*(\bar{F} \#G)}
\end{equation}
where $\Phi_{H, G}$ and $\Phi_{\bar{F} \#H, \bar{F} \#G}$ are Floer continuation maps. This implies that the action in (\ref{identification}) by $\pi_1({\rm Ham}_c(\widehat{W}, \omega))$ is in fact well-defined on ${\rm HF}_*(h)$. 

Similarly to (\ref{wt-hf}), this leads us to consider the following set
\begin{equation} \label{dfn-red-cHF}
\overline{{\rm HF}}(h) := \frac{{\rm HF}_*(h)}{\pi_1({\rm Ham}_c(\widehat{W}, \omega))}
\end{equation}
a reduced version of the contact Hamiltonian Floer homology, obtained by modulo the action by $\pi_1({\rm Ham}_c(\widehat{W}, \omega))$ as in (\ref{identification}). Here, let us emphasize that the action in (\ref{identification}) usually does {\it not} preserve the grading, therefore, in definition (\ref{dfn-red-cHF}), we drop the grading on purpose. 

Here is the key proposition that implies the conclusion of Theorem \ref{thm-descend} under the triviality hypothesis on $\pi_1({\rm Ham}_c(\widehat{W}, \omega))$. 

\begin{prop} \label{prop-hf-bar} With the definition (\ref{dfn-red-cHF}), for any contact Hamiltonian $ h, f: [0,1] \times M \to \R$ such that $\{\phi^t_f\}_{t \in [0,1]}$ is a contractible loop in ${\rm Cont}(M, \xi)$, then $\overline{{\rm HF}}(h) \simeq \overline{{\rm HF}}(\bar{f} \# h)$. In particular, $\overline{{\rm HF}}$ is well-defined on $\widetilde{{\rm Cont}}_0(M, \xi)$. \end{prop}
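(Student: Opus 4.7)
The plan is to imitate the proof of Proposition~\ref{prop-hf-wt}, but using the naturality action (\ref{identification}) in place of the zigzag isomorphism $\mathcal{B}$. The crux is to associate to the contractible loop $\{\phi_f^t\}$ in ${\rm Cont}(M,\xi)$ a class in $\pi_1({\rm Ham}_c(\widehat{W},\omega))$ whose action on $\mathrm{HF}_*$ realizes the desired isomorphism $\overline{{\rm HF}}(h)\simeq \overline{{\rm HF}}(\bar{f}\#h)$.

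First, I would pick a null-homotopy $\sigma:D^2\to {\rm Cont}(M,\xi)$ of $\{\phi_f^t\}$, i.e., a smooth family $\sigma_t^s$ with $\sigma_t^0\equiv\mathds{1}$ and $\sigma_t^1=\phi_f^t$, each $s$-slice being a loop at $\mathds{1}$. Denote by $f_t^s$ the contact Hamiltonian generating $\sigma_t^s$ and consider the lift $e^r f_t^s(x)$ to the convex end of $\widehat{W}$; since each $\sigma^s$ is a loop, by (\ref{lift-Ham}) this lift has time-$1$ flow equal to the identity on that end. Cutting off this lift smoothly in $s$ into the compact part of $\widehat{W}$ yields a compactly supported Hamiltonian $F^s$ that still generates a loop $\{\phi_{F^s}^t\}$ in ${\rm Ham}_c(\widehat{W},\omega)$, starting from $F^0\equiv 0$. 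At $s=1$ this produces a loop $\Phi_f:=\{\phi_{F^1}^t\}$ with class $[\Phi_f]\in\pi_1({\rm Ham}_c(\widehat{W},\omega))$.

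Next, apply (\ref{identification}) with $F:=F^1$. By the slope computation immediately below (\ref{cocycle-s}), $\bar{F}\#H$ has slope $\bar{f}\#h$ (the cut-off modification is confined to the compact part and does not alter the slope at infinity), so the naturality map $\mathcal N(F):{\rm HF}_*(h)\to {\rm HF}_*(\bar{f}\#h)$ is a well-defined isomorphism, and independent of the extension $H$ of $h$ thanks to the commutative diagram (\ref{N-C-com}). For a different null-homotopy $\tau$ producing a lift $F_\tau^1$, both loops $\Phi_f^\sigma$ and $\Phi_f^\tau$ lift the same $\phi_f$, so the concatenation $\Phi_f^\sigma\cdot (\Phi_f^\tau)^{-1}$ is a loop in ${\rm Ham}_c(\widehat{W},\omega)$ and represents an element of $\pi_1({\rm Ham}_c(\widehat{W},\omega))$. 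By functoriality of $\mathcal N$, the composition $\mathcal N(F_\sigma^1)\circ\mathcal N(F_\tau^1)^{-1}$ equals the action of this element, hence becomes the identity on $\overline{{\rm HF}}$, giving the canonical isomorphism $\overline{{\rm HF}}(h)\simeq \overline{{\rm HF}}(\bar{f}\#h)$. Descent to $\widetilde{{\rm Cont}}_0(M,\xi)$ then follows, since two lifts $[\phi_h],[\phi_{h'}]\in\widetilde{{\rm Cont}}_0$ coincide precisely when $h'=\bar{f}\#h$ for some $f$ generating a contractible loop.

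The main obstacle I anticipate is the $s$-parametric cut-off in the first step: (\ref{lift-Ham}) only prescribes the dynamics on the cylindrical end, and extending in a smoothly $s$-varying way to a genuine family of loops $\{\phi_{F^s}^t\}$ in ${\rm Ham}_c(\widehat{W},\omega)$ requires that each modification in the compact part preserve the loop condition, not merely return a Hamiltonian. A secondary but related subtlety is to verify that for two different cut-offs the resulting loop $\Phi_f^\sigma\cdot (\Phi_f^\tau)^{-1}$ is \emph{exactly} absorbed by the $\pi_1({\rm Ham}_c(\widehat{W},\omega))$ action in (\ref{dfn-red-cHF}), so that no residual ambiguity persists on $\overline{{\rm HF}}$.
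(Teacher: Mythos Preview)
Your overall strategy---use the naturality map $\mathcal N(F)$ from (\ref{identification}) and show that the ambiguity in the choice of $F$ is absorbed by the $\pi_1({\rm Ham}_c(\widehat{W},\omega))$-action---is exactly the paper's approach. However, there is a genuine confusion in your construction of $F$ that breaks the argument.

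You claim that $F:=F^1$ is a \emph{compactly supported} Hamiltonian and simultaneously that $\bar F\#H$ has slope $\bar f\#h$. These are incompatible: if $F$ is compactly supported then its slope is $0$, so $\bar F\#H$ has slope $h$, and $\mathcal N(F)$ is merely an automorphism of ${\rm HF}_*(h)$, not a map to ${\rm HF}_*(\bar f\#h)$. What you actually need is a Hamiltonian $F$ on $\widehat W$ with slope $f$ (hence \emph{not} compactly supported) whose flow $\{\phi_F^t\}_{t\in[0,1]}$ is a loop in ${\rm Ham}(\widehat W,\omega)$. The point is then that for two such extensions $F,G$ the difference $\bar F\#G$ has slope $0$, so \emph{that} Hamiltonian generates a loop in ${\rm Ham}_c(\widehat W,\omega)$, and $\mathcal N(F)^{-1}\circ\mathcal N(G)=\mathcal N(\bar F\#G)$ is precisely an element of the action you are quotienting by.

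The existence of such an $F$ is not obtainable by a naive cut-off or extension of $e^r f_t$ into the compact part: as you correctly worry, an arbitrary extension need not produce a loop on all of $\widehat W$. The paper fills this gap by invoking the Biran--Giroux long exact sequence, which guarantees that a contractible loop in ${\rm Cont}(M,\xi)$ lifts to a loop in ${\rm Ham}(\widehat W,\omega)$ with the correct slope. Your $s$-parametric family and the null-homotopy $\sigma$ are then unnecessary: a single extension $F$ suffices, and the comparison of two choices $F,G$ (rather than two null-homotopies) is what descends to $\overline{\rm HF}$.
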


\begin{proof} By the definition in (\ref{dfn-red-cHF}), we need to show that for any extended Hamiltonian $H$ of $h$ defined on $[0,1] \times \widehat{W}$, there exists an extended Hamiltonian $K$ of $\bar{f} \#h$ defined on $[0,1] \times \widehat{W}$ such that the corresponding Hamiltonian Floer homologies ${\rm HF}_*(H)$ and ${\rm HF}_*(K)$ are isomorphic (not necessarily preserving the grading), and this isomorphism is independent of the choice of the extended Hamiltonian, up to the action by $\pi_1({\rm Ham}_c(\widehat{W}, \omega))$. 

By the Biran-Giroux long exact sequence (see \cite{BG-long} or \cite[Section 2.3]{DU-exotic}), there exists $F: [0,1] \times \widehat{W} \to \R$, a Hamiltonian with slope $f$, such that $\{\phi_F^t\}_{t \in [0,1]}$ is a loop in ${\rm Ham}(\widehat{W}, \omega)$ (not necessarily contractible). Then the Hamiltonian $K_F: = \bar{F} \# H$, defined in (\ref{cocycle-s}), generates the Hamiltonian isotopy $(\phi_F^{t})^{-1} \circ \phi_H^t$. The identification in (\ref{identification}) provides an isomorphism $\mathcal N(F): {\rm HF}_*(H) \simeq {\rm HF}_*(K_F)$. 

However, this is not enough to yield the same conclusion for contact Hamiltonian Floer homologies ${\rm HF}_*(h)$ and ${\rm HF}_*(\bar{f} \#h)$, since the identification in $\mathcal N(F)$ depends on the choice of the extended Hamiltonian $F$. Explicitly, suppose $G: [0,1] \times \widehat{W} \to \R$ is another extended Hamiltonian that has slope $f$ and $\{\phi_G^t\}_{t \in [0,1]}$ is a loop in ${\rm Ham}(\widehat{W}, \omega)$. Similarly to $K_F$, denote the resulting Hamiltonian by $K_G: = \bar{G} \#H$. Different from (\ref{N-C-com}), the following diagram does not always commute, 
\[ \xymatrixcolsep{5pc} \xymatrix{
{\rm HF}_*(H) \ar[r]^{\mathcal N(F)} \ar[d]_{\mathds{1} = \Phi_{H,H}} & {\rm HF}_*(K_F) \ar[d]^-{\Phi_{K_F, K_G}}\\
{\rm HF}_*(H) \ar[r]^{\mathcal N(G)} & {\rm HF}_*(K_G)} \]
But, the following isomorphism
\[ \mathcal N(\bar{F} \# G) = \mathcal N(F)^{-1} \circ \mathcal N(G): {\rm HF}_*(K_F) \to {\rm HF}_*(K_G) \]
is induced by the action of the element $\phi_{\bar{F} \#G}^1 \in \pi_1({\rm Ham}_c(\widehat{W}, \omega))$. Indeed, since by the formula (\ref{cocycle-s}), the Hamiltonian $\bar{F} \# G$ has slope $0$ in the convex end where the Hamiltonian isotopy $\phi_{\bar{F} \# G}^t = \mathds{1}$. In other words, both ${\rm HF}_*(K_F)$ and ${\rm HF}_*(K_G)$ represent the same set $\overline{{\rm HF}}(\bar{f} \# h)$. So, we can take the requested $K$ to be $K_F$ for any extended Hamiltonian $F$ of $f$. Moreover, the isomorphism $\mathcal N(F)$ from (\ref{identification}) is the desired isomorphism on the level of $\overline{{\rm HF}}$. This completes the proof. 
\end{proof}

\medskip

\noindent {\bf Acknowledgement}. The second author is supported by the Science Fund of the Republic of Serbia, grant no.~7749891, Graphical Languages - GWORDS. The third author is supported by USTC Research Funds of the Double First-Class Initiative. Part of this paper has been presented in the Topology Seminar at the Yau Mathematical Sciences Center, Tsinghua University in May 2023, as well as in the conference Persistence Homology in Symplectic and Contact Topology, held in Albi, France in June 2023. We thank Honghao Gao and Jean Gutt for their invitations and hospitality. We also thank Jungsoo Kang for his interest in our work and many fruitful communications. Finally, we express special gratitude to Dylan Cant for thorough and generous communications on several overlaps between our works. In addition, we thank Professor Dietmar Salamon for a general discussion on the draft of this paper.

\bibliographystyle{amsplain}
\bibliography{biblio_CHD}

\noindent\\

\end{document}